\numberwithin{equation}{section}
\theoremstyle{plain}
\newtheorem{theorem}{Theorem}[section]
\newtheorem{definition}{Definition}[section]
\newtheorem{lemma}{Lemma}[section]
\newtheorem{corollary}{Corollary}[section]
\newtheorem{proposition}{Proposition}[section]
\newtheorem{remark}{Remark}[section]
\begin{document}

\title[The quadratic covariation for a weighted-fBm]{The quadratic covariation for a weighted fractional Brownian motion${}^{*}$}

\footnote[0]{${}^{*}$The Project-sponsored by NSFC (No. 11571071, 11426036), Innovation Program of Shanghai Municipal Education
Commission (No. 12ZZ063) and Natural Science Foundation of Anhui
Province (No.1408085QA10)}

\author[X. Sun, L. Yan and Q. Zhang]{Xichao Sun${}^{\dag,\natural}$, Litan Yan${}^{\ddag,\S}$ and Qinghua Zhang${}^{\ddag}$}

\footnote[0]{${}^{\natural}$sunxichao626@126.com, ${}^{\S}$litan-yan@hotmail.com (Corresponding Author)}

\date{}

\keywords{Weighted fractional Brownian motion, local time, Malliavin calculus, quadratic covariation, It\^{o} formula}

\subjclass[2000]{60G15, 60H05, 60G17}

\maketitle

\begin{center}
{\footnotesize {\it ${}^{\dag}$Department of Mathematics and Physics, Bengbu University\\
1866 Caoshan Rd., Bengbu 233030, P.R. China\\
${}^{\ddag}$Department of Mathematics, College of Science, Donghua University\\
2999 North Renmin Rd. Songjiang, Shanghai 201620, P.R. China}}
\end{center}

\maketitle


\begin{abstract}
Let $B^{a,b}$ be a weighted fractional Brownian motion with indices $a,b$ satisfying $a>-1,-1<b<0,|b|<1+a$. In this paper, motivated by the asymptotic property
$$
E[(B^{a,b}_{s+\varepsilon}-B^{a,b}_s)^2] =O(\varepsilon^{1+b})\not\sim \varepsilon^{1+a+b}=E[(B^{a,b}_{\varepsilon})^2]\qquad (\varepsilon\to 0)
$$
for all $s>0$, we consider the generalized quadratic covariation $\bigl[f(B^{a,b}),B^{a,b}\bigr]^{(a,b)}$ defined by
$$
\bigl[f(B^{a,b}),B^{a,b}\bigr]^{(a,b)}_t=\lim_{\varepsilon\downarrow
0}\frac{1+a+b}{\varepsilon^{1+b}}\int_\varepsilon^{t+\varepsilon}
\left\{f(B^{a,b}_{s+\varepsilon})
-f(B^{a,b}_s)\right\}(B^{a,b}_{s+\varepsilon}-B^{a,b}_s)s^{b}ds,
$$
provided the limit exists uniformly in probability. We construct a Banach space ${\mathscr H}$ of measurable functions such that the generalized quadratic covariation exists in $L^2(\Omega)$ and the generalized Bouleau-Yor identity
$$
[f(B^{a,b}),B^{a,b}]^{(a,b)}_t=-\frac1{(1+b){\mathbb B}(a+1,b+1)} \int_{\mathbb R}f(x){\mathscr L}^{a,b}(dx,t)
$$
holds for all $f\in {\mathscr H}$, where ${\mathscr L}^{a,b}(x,t)=\int_0^t\delta(B^{a,b}_s-x)ds^{1+a+b}$ is the weighted local time of $B^{a,b}$ and ${\mathbb B}(\cdot,\cdot)$ is the Beta function.
\end{abstract}


\section{Introduction}
Long/short range dependence (or long/short memory) stochastic processes with self-similarity have been intensively used as models for different physical phenomena. These properties appeared in empirical studies in areas like hydrology and geophysics; and they appeared to play an important role in network traffic analysis, economics and telecommunications. As a consequence, some efficient mathematical models based on long/short range dependence processes with self-similarity have been proposed in these directions. We refer to the monographs of self-similar processes by Embrechts-Maejima~\cite{Embrechts-Maejima}, Sheluhin et al~\cite{Sheluhin et al.},  Samorodnitsky~\cite{Samorodnitsky},  Samorodnitsky-Taqqu~\cite{Samorodnitsky-Taqqu}, Taqqu~\cite{Taqqu3} and Tudor~\cite{Tudor}.

The fractional Brownian motion is a simple stochastic process with long/short range dependence and self-similarity which is a suitable generalization of standard Brownian motion. Some surveys and complete literatures on fractional Brownian motion could be found in Biagini {\it et al}~\cite{BHOZ}, Gradinaru et al.~\cite{Grad1}, Hu~\cite{Hu2}, Mishura~\cite{Mishura2}, Nualart~\cite{Nualart1}. On the other hand, many authors have proposed to use more general self-similar Gaussian processes and random fields as stochastic models. Such applications have raised many interesting theoretical questions about self-similar Gaussian
processes and fields in general. Therefore, some generalizations of
the fBm has been introduced. However, contrast to the extensive
studies on fractional Brownian motion, there has been little
systematic investigation on other self-similar Gaussian processes.
The main reason for this is the complexity of dependence structures
for self-similar Gaussian processes which do not have stationary
increments. Thus, it seems interesting to study some extensions of fractional Brownian motion such as bi-fractional Brownian motion and the weighted fractional Brownian motion.

In this paper we consider the weighted fractional Brownian motion (weighted-fBm). Recall that the so-called weighted-fBm $B^{a,b}$ with parameters $a$ and $b$ is a zero mean Gaussian process with long/short-range dependence and self-similarity. It admits the relatively simple covariance as follows
$$
E\left[B^{a,b}_tB^{a,b}_s\right]=\frac{1}{2{\mathbb
B}(a+1,b+1)}\int_0^{s\wedge t}u^a((t-u)^b+(s-u)^b)du
$$
where ${\mathbb B}(\cdot,\cdot)$ is the beta function and
$a>-1,|b|<1,|b|<a+1$. Clearly, if $a=0$, the process coincides with
the standard fractional Brownian motion with Hurst parameter
$H=\frac{b+1}2$, and it admits the explicit significance. We have
(see, Lemma~\ref{lem3.0} in Section~\ref{sec3}, see also Bojdecki
{\em et al}~\cite{Bojdecki1})
\begin{equation}\label{sec1-eq1-1}
c_{a,b}(t\vee s)^a|t-s|^{b+1}\leq
E\left[\left(B^{a,b}_t-B^{a,b}_s\right)^2\right]\leq C_{a,b} (t\vee
s)^a|t-s|^{b+1}
\end{equation}
for $s,t\geq 0$. Thus, Kolmogorov's continuity criterion implies
that weighted-fractional Brownian motion is $\gamma$-H\"{o}lder
continuous for any $\gamma<\frac{1+b}2$, where $\frac{1+b}2$ is
called the H\"older continuous index. The process $B^{a,b}$ is $\frac12(a+b+1)$-self similar and its increments are not stationary. It is important to note that the following fact:
\begin{itemize}
\item The H\"older continuous index $\frac12(1+b)$ is not equal either to the its self-similar index nor the order of the infinitesimal $\sqrt{E[(X_t)^2]}\to 0$ as $t\downarrow 0$, provided $a\neq 0$.
\end{itemize}
However, the three indexes are coincident for many famous self-similar Gaussian processes such as fractional Brownian motion,
sub-fractional Brownian motion and bi-fractional Brownian motion.
That is causing trouble for the research, and it is also our a motivation to study the weighted-fBm. Before making the decision to study the weighted-fBm we first try to investigate in Yan et al.~\cite{Yan3} some path properties including strong local nondeterminism, Chung's law of the iterated logarithm and the smoothness of the collision local time. In particular, we showed that it is strongly locally $\phi$-nondeterministic with $\phi(r)=r^{1+b}$. In general, the function $\phi$ depends on the self-similar index of the process, but the fact is, for the weighted-fBm, $\phi(r)=r^{1+b}$ is independent of parameter $a$, which enhances further our interesting to study the weighted-fBm.

The weighted-fBm appeared in Bojdecki {\em et al}~\cite{Bojdecki1} in a limit of occupation time fluctuations of a system of independent particles moving in ${\mathbb R}^d$ according a symmetric $\alpha$-stable L\'evy process, $0<\alpha\leq 2$, started from an inhomogeneous
Poisson configuration with intensity measure
$$
\frac{dx}{1+|x|^\gamma}
$$
and $0<\gamma\leq d=1<\alpha$, $a=-\gamma/\alpha$, $b=1-1/\alpha$,
the ranges of values of $a$ and $b$ being $-1< a < 0$ and $0 < b\leq
1+a$. The process also appears in Bojdecki {\em et
al}~\cite{Bojdecki2} in a high-density limit of occupation time
fluctuations of the above mentioned particle system, where the
initial Poisson configuration has finite intensity measure, with
$d=1<\alpha$, $a=-1/\alpha$, $b=1-1/\alpha$. Moreover, the
definition of the weighted-fBm $B^{a,b}$ was first introduced by
Bojdecki {\em et al}~\cite{Bojdecki0}, and it is neither a
semimartingale nor a Markov process if $b\neq 0$, so many of the
powerful techniques from stochastic analysis are not available when
dealing with $B^{a,b}$. There has been little systematic investigation on weighted-fBm since it it has been introduced by Bojdecki {\em et al}~\cite{Bojdecki0}.

In this paper, we consider the the {\it generalized quadratic covariation} when $b<0$, and it is important to note that a large class of Gaussian processes with similar characteristics as weighted-fBm could be handled in uniform approach used here. Clearly, by the estimates~\eqref{sec1-eq1-1}, we have
$$
E\bigl[(B^{a,b}_{s+\varepsilon}-B^{a,b}_s)^2\bigr]=O( s^a\varepsilon^{1+b})\qquad (\varepsilon\to 0)
$$
for all $s>0$, which implies that
$$
\lim_{\varepsilon\downarrow 0}\frac{1}{\varepsilon} \int_{t_0}^{t}(B_{s+\varepsilon}-B_s)^2ds=
\begin{cases}
0, & \text{ {if $0<b<1$}}\\
+\infty,& \text{ {if $-1<b<0$}}
\end{cases}
$$
for all $t\geq t_0>0$, where the limit is uniformly in probability. Additional results on the quadratic variation can be found in Russo-Vallois~\cite{Russo2}. Thus, we need a substitute tool of the quadratic variation for $b\neq 0$. Inspired by~\eqref{sec1-eq1-1}, the fact
$$
E[(B^{a,b}_{s+\varepsilon}-B^{a,b}_s)^2] =O(\varepsilon^{1+b})\not\sim \varepsilon^{1+a+b}=E[(B^{a,b}_{\varepsilon})^2]\qquad (\varepsilon\to 0)
$$
for all $s>0$ and Cauchy's principal value, one can naturally give the following definition.

\begin{definition}
Let $a>-1,|b|<1,|b|<a+1$ and let the integral

$$
J_\varepsilon(f,t):=\frac{1+a+b}{\varepsilon^{1+b}} \int_\varepsilon^{t+\varepsilon}\left\{ f(B^{a,b}_{s+\varepsilon})
-f(B^{a,b}_s)\right\}(B^{a,b}_{s+\varepsilon}-B^{a,b}_s)s^{b}ds
$$
exists for all $\varepsilon>0$ and all Borel functions $f$. The limit
$$
[f(B^{a,b}),B^{a,b}]^{(a,b)}_t:=\lim_{\varepsilon\downarrow 0}J_\varepsilon(f,t)
$$
is called the {\it generalized quadratic covariation} of $f(B^{a,b})$ and $B^{a,b}$, provided the limit exists uniformly in probability.
\end{definition}

\begin{remark}\label{rem}
{\rm

It is important to note that the above definition is available for a large class of Gaussian processes with similar characteristics as weighted-fBm. Let now $X$ be a self-similar Gaussian process with H\"older continuous paths of order $\alpha\in (0,1)$. We then can define the generalized quadratic covariation $[f(X),X]^{(a,b)}$ as follows
$$
[f(X),X]^{(a,b)}_t=\lim_{\varepsilon\downarrow
0}\frac{2\alpha}{\varepsilon^{2\alpha}}\int_\varepsilon^{t+\varepsilon}
\left\{f(X_{s+\varepsilon})
-f(X_s)\right\}(X_{s+\varepsilon}-X_s)s^{2\alpha-1}ds
$$
for any Borel functions $f$, provided the limit exists uniformly in probability. When $0<\alpha<\frac12$, we can get some similar results for the process $X$ to weighted-fBm with $-1<b<0$.

}
\end{remark}

We shall see in Section~\ref{sec6} that
$$
\bigl[f(B^{a,b}),B^{a,b}\bigr]^{(a,b)}_t =\kappa_{a,b}\int_0^tf'(B^{a,b}_s)ds^{1+a+b},\qquad t\geq 0
$$
for all $f\in C^1({\mathbb R})$ and all $b\in (-1,1)$, where
$$
\kappa_{a,b}=\frac1{(1+b){\mathbb B}(a+1,b+1)}.
$$
In the present paper we prove the existence of the generalized quadratic covariation for $-1<b<0$, our start point is to consider the decomposition
\begin{equation}\label{sec1-eq1-2}
\begin{split}
\frac{1}{\varepsilon^{1+b}}\int_\varepsilon^{t+\varepsilon} &\left\{f(B^{a,b}_{
s+\varepsilon})-f(B^{a,b}_s)\right\}(B^{a,b}_{s+\varepsilon} -B^{a,b}_s)s^{b}ds\\
&=\frac{1}{\varepsilon^{1+b}}\int_\varepsilon^{t+\varepsilon} f(B^{a,b}_{
s+\varepsilon})(B^{a,b}_{s+\varepsilon}-B^{a,b}_s)s^bds\\ &\qquad\qquad-\frac{1}{\varepsilon^{1+b}}
\int_\varepsilon^{t+\varepsilon} f(B^{a,b}_s)(B^{a,b}_{s+\varepsilon}-B^{a,b}_s)s^bds
\end{split}
\end{equation}
for all $-1<b<0$. It is important to note that the above decomposition is unavailable for $0<b<1$. For example, we have
$$
\frac{1}{\varepsilon^{1+b}}\int_\varepsilon^{t+\varepsilon} E\left(B^{a,b}_{s} (B^{a,b}_{s+\varepsilon}-B^{a,b}_s)\right)s^bds\longrightarrow \infty
$$
for all $b>0$ and $t>0$, as $\varepsilon$ tends to zero, because
\begin{equation}\label{sec1-eq1-3}
E\left(B^{a,b}_{s} (B^{a,b}_{s+\varepsilon}-B^{a,b}_s)\right) \sim s^{a+b}\varepsilon\qquad (\varepsilon\to 0).
\end{equation}
The above asymptotic property follows from
\begin{align*}
E[B^{a,b}_s(B^{a,b}_{s+\varepsilon}&-B^{a,b}_s)]=\frac1{2{\mathbb B}(1+a,1+b)}\int_0^su^a[({s+\varepsilon}-u)^b+(s-u)^b]du-s^{1+a+b}\\
&=\frac1{2{\mathbb B}(1+a,1+b)}\int_0^su^a\left[({s+\varepsilon}-u)^b-(s-u)^b\right]du\\
&=\frac1{2{\mathbb B}(1+a,1+b)}\int_0^su^a({s+\varepsilon}-u)^b\left[1- (\frac{s-u}{{s+\varepsilon}-u})^b\right]du\\
&\sim s^{a+b}\varepsilon
\end{align*}
for all $b>0$ by the fact $1-x^b\sim 1-x$ as $x\to 1$. Thus, the method used here is different  from that need to handle the case $0<b<1$.

This paper is organized as follows. In Section~\ref{sec2} we present some preliminaries for weighted-fBm and Malliavin calculus. In Section~\ref{sec3}, we establish some technical estimates associated with weighted-fBm with $-1<b<0$. In Section~\ref{sec6}, as an example, when $f\in C^1({\mathbb R})$ we show that the generalized quadratic covariation $\bigl[f(B^{a,b}),B^{a,b}\bigr]^{(a,b)}$ exists in $L^2$ for all $a,b$ and
$$
\bigl[f(B^{a,b}),B^{a,b}\bigr]^{(a,b)}_t=\kappa_{a,b}\int_0^t f'(B^{a,b}_s)ds^{1+a+b}.
$$
In particular, we have
$$
\bigl[B^{a,b},B^{a,b}\bigr]^{(a,b)}_t=\kappa_{a,b}t^{1+a+b}.
$$
In Section~\ref{sec5}, in more general cases we consider the existence of generalized quadratic covariation for $-1<b<0$. By estimating the two terms of the right hand side in the decomposition~\eqref{sec1-eq1-2}, respectively, we construct a Banach space ${\mathscr H}$ of measurable functions $f$ on
${\mathbb R}$ such that $\|f\|_{{\mathscr H}}<\infty$, where
\begin{align*}
(\|f\|_{{\mathscr H}})^2:=\int_0^{T+1}\int_{\mathbb
R}|f(x)|^2e^{-\frac{x^2}{2s^{1+a+b}
}}\frac{dxds}{\sqrt{2\pi}s^{(1-a-b)/2}}.
\end{align*}
We show that the {\it generalized quadratic covariation} $\bigl[f(B^{a,b}),B^{a,b}\bigr]^{(a,b)}$ exists for all $f\in {\mathscr H}$ and
$$
E\left|\bigl[f(B^{a,b}),B^{a,b}\bigr]^{(a,b)}_t\right|^2\leq C_{a,b,T}\|f\|_{\mathscr H}^2,\qquad 0\leq t\leq T.
$$
In Section~\ref{sec7}, for $-1<b<0$ we consider the integral
\begin{equation}\label{sec1-eq1-4}
\int_{\mathbb R}f(x){\mathscr L}^{a,b}(dx,t)
\end{equation}
for $f\in {\mathscr H}$, where ${\mathscr L}^{a,b}(x,t)$ denotes the weighted local time defined by
$$
{\mathscr L}^{a,b}(x,t)=(1+a+b)\int_0^t\delta(B^{a,b}_s-x)s^{a+b}ds.
$$
In order to study the integral we obtain the following It\^o formula :
$$
F(B^H)=F(0)+\int_0^tf(B^{a,b}_s)dB^{a,b}_s+\frac12(\kappa_{a,b})^{-1}
[f(B^{a,b}),B^{a,b}]^{(a,b)}_t,
$$
where $F$ is an absolutely continuous function with $F'=f\in {\mathscr H}$, and show that the {\em generalized Bouleau-Yor identity}
$$
[f(B^{a,b}),B^{a,b}]^{(a,b)}_t=-\kappa_{a,b}\int_{\mathbb R}f(x){\mathscr L}^{a,b}(dx,t)
$$
holds for all $f\in {\mathscr H}$. As a corollary we get the Tanaka formula
$$
|B^{a,b}_t-x|=|-x|+\int_0^t{\rm sign}(B^{a,b}_s-x)dB^{a,b}_s+{\mathscr L}^{a,b}(x,t)
$$
for $-1<b<0$.

\section{The weighted fractional Brownian motion}\label{sec2}
Let $B^{a,b}$ be a weighted-fBm with parameters $a,b$
($a>-1,|b|<1,|b|<a+1$), defined on the complete probability space
$(\Omega,\mathcal{F},P)$. As we pointed out before, the weighted-fBm
$B^{a,b}=\left\{B^{a,b}_t,0\leq t\leq T \right\}$ with indices $a$
and $b$ is a mean zero Gaussian processes such that $B_0^{a,b}=0$ and
\begin{equation}\label{sec2-eq2.1}
E\left[B^{a,b}_tB^{a,b}_s\right]=\frac{1}{2{\mathbb
B}(a+1,b+1)}\int_0^{s\wedge t}u^a((t-u)^b+(s-u)^b)du
\end{equation}
for $s,t\geq 0$. It is known that the function $(t,s)\mapsto
R^{a,b}(t,s)$ is positive-definite if and only if $a$ and $b$
satisfy the conditions
\begin{equation}\label{sec2-eq2.2}
a>-1,\;|b|<1,\;|b|<a+1,
\end{equation}
and the following statements hold (see Bojdecki~\cite{Bojdecki0}):
\begin{itemize}
\item $B^{a,b}$ is $\frac12(a+b+1)$-self similar;
\item $B^{a,b}$ has independent increments for $b=0$;
\item $B^{a,b}$ is neither a semimartingale nor a Markov process if $b\neq 0$;
\item If $b>0$, then $B^{a,b}$ is long-range dependence;
\item If $b<0$, then $B^{a,b}$ is short-range dependence.
\end{itemize}
Thus, throughout this paper we let $b\neq 0$ for simplicity.

As a Gaussian process, it is possible to construct a stochastic calculus of variations with respect to $B^{a,b}$. We refer to Al\'os {\it et al}~\cite{Nua1} and Nualart~\cite{Nualart1} for the complete descriptions of stochastic calculus with respect to Gaussian processes. Here we recall only the basic elements of this theory. Throughout this paper we assume that~\eqref{sec2-eq2.2} holds.

Let ${\mathcal H}_{a,b}$ be the completion of the linear space ${\mathcal
E}$ generated by the indicator functions $1_{[0,t]}, t\in [0,T]$
with respect to the inner product
$$
\langle 1_{[0,s]},1_{[0,t]} \rangle_{{\mathcal H}_{a,b}}=R^{a,b}(s,t)=\frac{1}{2{\mathbb
B}(a+1,b+1)}\int_0^{s\wedge t}u^a((t-u)^b+(s-u)^b)du.
$$
The application $\varphi\in {\mathcal E}\to B(\varphi)$ is an
isometry from ${\mathcal E}$ to the Gaussian space generated by $B^{a,b}$ and it can be extended to ${\mathcal H}_{a,b}$.

\begin{remark}
{\rm

For $b>0$ we can characterize ${\mathcal H}_{a,b}$ as
$$
{\mathcal H}_{a,b}=\{f:[0,T]\to {\mathbb R}\;|\;\|f\|_{a,b}<\infty\},
$$
where
$$
\|f\|^2_{a,b}=\int_0^T\int_0^Tf(t)f(s)\frac{\partial^2}{\partial s\partial t}R^{a,b}(s,t)dsdt.
$$
Clearly, we can write its covariance as
\begin{align*}
\phi_{a,b}(s,t):=\frac{\partial^2}{\partial t\partial s}R^{a,b}(t,s) &=\frac{b}{2{\mathbb
B}(a+1,b+1)}(t\wedge s)^a|t-s|^{b-1}
\end{align*}
for $b>0$. Thus, $R^{a,b}$ is the distribution function of an absolutely continuous positive measure with density $\frac{b}{2{\mathbb
B}(a+1,b+1)}(t\wedge s)^a|t-s|^{b-1}$ which belongs of course to
$L^1([0,T ]^2)$.

}
\end{remark}

Let us denote by ${\mathcal S}^{a,b}$ the set of smooth functionals of the form
$$
F=f(B(\varphi_1),B(\varphi_2),\ldots,B(\varphi_n)),
$$
where $f\in C^{\infty}_b({\mathbb R}^n)$ ($f$ and all its derivatives are bounded) and $\varphi_i\in {\mathcal H}_{a,b}$. The {\em Malliavin derivative} $D^{a,b}$ of a functional $F$ as above is given by
$$
D^{a,b}F=\sum_{j=1}^n\frac{\partial f}{\partial
x_j}(B(\varphi_1),B(\varphi_2), \ldots,B(\varphi_n))\varphi_j.
$$
The derivative operator $D^{a,b}$ is then a closable operator from $L^2(\Omega)$ into $L^2(\Omega;{\mathcal H}_{a,b})$. We denote by ${\mathbb D}^{1,2}$ the closure of ${\mathcal S}_{a,b}$ with respect to the norm
$$
\|F\|_{1,2}:=\sqrt{E|F|^2+E\|D^{a,b}F\|^2_{a,b}}.
$$

The {\it divergence integral} $\delta^{a,b}$ is the adjoint of derivative operator $D^{a,b}$. That is, we say that a random variable $u$ in $L^2(\Omega;{\mathcal H}_{a,b})$ belongs to the domain of the divergence operator $\delta^{a,b}$, denoted by ${\rm {Dom}}(\delta^{a,b})$, if
$$
E\left|\langle D^{a,b}F,u\rangle_{{\mathcal H}_{a,b}}\right|\leq
c\|F\|_{L^2(\Omega)}
$$
for every $F\in {\mathbb D}^{1,2}$, where $c$ is a constant depending only on $u$. In this case $\delta^{a,b}(u)$ is defined by the duality relationship
\begin{equation}
E\left[F\delta^{a,b}(u)\right]=E\langle D^{a,b}F,u\rangle_{{\mathcal H}_{a,b}}
\end{equation}
for any $F\in {\mathbb D}^{1,2}$, we have ${\mathbb D}^{1,2}\subset
{\rm {Dom}}(\delta^{a,b})$. We will denote
$$
\delta^{a,b}(u)=\int_0^Tu_sdB^{a,b}_s
$$
for an adapted process $u$, and it is called Skorohod integral. We have the following It\^o formula.

\begin{theorem}[Al\'os {\it et al}~\cite{Nua1}]\label{theorem-Ito}
Let $f\in C^{2}({\mathbb R})$ such that
\begin{equation}\label{sec2-Ito-con1}
\max\left\{|f(x)|,|f'(x)|,|f''(x)|\right\}\leq \kappa e^{\beta x^2},
\end{equation}
where $\kappa$ and $\beta$ are positive constants with
$\beta<\frac14T^{-(1+a+b)}$. Then we have
\begin{align*}
f(B^{a,b}_t)=f(0)&+\int_0^t\frac{d}{dx}f(B^{a,b}_s)dB^{a,b}_s
+\frac12(1+a+b)\int_0^t\frac{d^2}{dx^2}f(B^{a,b}_s)s^{a+b}ds
\end{align*}
for all $t\in [0,T]$.
\end{theorem}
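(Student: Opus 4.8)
The plan is to prove the formula by the standard Malliavin-calculus scheme for Gaussian Skorohod integrals, the process-specific input being the exact variance together with the covariance estimate~\eqref{sec1-eq1-1}. First I record that $\rho(s):=E[(B^{a,b}_s)^2]=R^{a,b}(s,s)=s^{1+a+b}$, so that $d\rho(s)=(1+a+b)s^{a+b}\,ds$ and the asserted second-order term is precisely $\frac12\int_0^t f''(B^{a,b}_s)\,d\rho(s)$. The growth hypothesis $\beta<\frac14 T^{-(1+a+b)}$ guarantees, via $\rho(s)\le T^{1+a+b}$ and Gaussianity, that $E[e^{2\beta(B^{a,b}_s)^2}]<\infty$ uniformly on $[0,T]$, whence $f(B^{a,b}_s),f'(B^{a,b}_s),f''(B^{a,b}_s)\in L^2(\Omega)$; this lets me approximate a general $f$ by bounded smooth functions and pass to the limit by dominated convergence, so it suffices to treat $f\in C_b^\infty(\mathbb R)$.

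For smooth $f$ and a partition $\pi\colon 0=s_0<\dots<s_n=t$, I telescope $f(B^{a,b}_t)-f(0)=\sum_i[f(B^{a,b}_{s_{i+1}})-f(B^{a,b}_{s_i})]$ and Taylor-expand each increment to second order. Writing $\Delta_iB=B^{a,b}_{s_{i+1}}-B^{a,b}_{s_i}$ and $h_i=1_{(s_i,s_{i+1}]}\in{\mathcal H}_{a,b}$, the integration-by-parts rule $\delta^{a,b}(Fh)=F\delta^{a,b}(h)-\langle D^{a,b}F,h\rangle_{{\mathcal H}_{a,b}}$ applied with $F=f'(B^{a,b}_{s_i})$ (so that $D^{a,b}F=f''(B^{a,b}_{s_i})1_{[0,s_i]}$ and $\delta^{a,b}(h_i)=\Delta_iB$) turns the first-order sum into $\sum_i\delta^{a,b}(f'(B^{a,b}_{s_i})h_i)+\sum_i f''(B^{a,b}_{s_i})\langle 1_{[0,s_i]},h_i\rangle_{{\mathcal H}_{a,b}}$. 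Setting $\sigma_i^2:=E[(\Delta_iB)^2]=\|h_i\|^2_{{\mathcal H}_{a,b}}$ and using $\Delta_i\rho:=\rho(s_{i+1})-\rho(s_i)=2\langle 1_{[0,s_i]},h_i\rangle_{{\mathcal H}_{a,b}}+\sigma_i^2$, I combine these trace terms with the quadratic Taylor terms to obtain the exact identity $f(B^{a,b}_t)-f(0)=\sum_i\delta^{a,b}(f'(B^{a,b}_{s_i})h_i)+\frac12\sum_i f''(B^{a,b}_{s_i})\Delta_i\rho+\frac12 Q_\pi+R_\pi$, where $Q_\pi=\sum_i f''(B^{a,b}_{s_i})[(\Delta_iB)^2-\sigma_i^2]$ and $R_\pi$ gathers the Taylor remainders arising from replacing $f''(\xi_i)$ by $f''(B^{a,b}_{s_i})$.

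It remains to let $|\pi|\to0$. The $\rho$-sum converges to $\frac12\int_0^t f''(B^{a,b}_s)\,d\rho(s)$ by continuity of $s\mapsto f''(B^{a,b}_s)$ and monotonicity of $\rho$, and $R_\pi\to0$ by uniform continuity of $f''$ on the range of the paths; the divergence sum converges to $\delta^{a,b}(f'(B^{a,b}_\cdot)1_{[0,t]})=\int_0^t f'(B^{a,b}_s)\,dB^{a,b}_s$ once one checks $f'(B^{a,b}_{s_i})h_i\to f'(B^{a,b}_\cdot)1_{[0,t]}$ in $L^2(\Omega;{\mathcal H}_{a,b})$ and invokes the closedness of $\delta^{a,b}$. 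The crux—and the step I expect to be the main obstacle—is showing $Q_\pi\to0$ in $L^2(\Omega)$. Expanding $E|Q_\pi|^2$ and using the Gaussian identity $E\{[(\Delta_iB)^2-\sigma_i^2][(\Delta_jB)^2-\sigma_j^2]\}=2\langle h_i,h_j\rangle^2_{{\mathcal H}_{a,b}}$ reduces this, after bounding the $f''$ weights, to controlling $\sum_{i,j}\langle h_i,h_j\rangle^2_{{\mathcal H}_{a,b}}$, in which the diagonal $\sum_i\|h_i\|^4_{{\mathcal H}_{a,b}}\le C_{a,b}\sum_i (s_{i+1})^{2a}|s_{i+1}-s_i|^{2(1+b)}$ is estimated from~\eqref{sec1-eq1-1} and the off-diagonal mixed covariances $\langle h_i,h_j\rangle_{{\mathcal H}_{a,b}}$ from the finer increment estimates of Section~\ref{sec3}. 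This is precisely the delicate point, since the local Hölder regularity of $B^{a,b}$ is only $\frac12(1+b)$; assembling these bounds to conclude $E|Q_\pi|^2\to0$ is the heart of the argument and is what pins down the admissible range of parameters. Once this is established the formula follows for smooth $f$, and the reduction of the first paragraph extends it to all $f$ obeying the stated growth bound.
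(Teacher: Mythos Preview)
The paper does not supply its own proof: Theorem~\ref{theorem-Ito} is quoted from Al\'os--Mazet--Nualart~\cite{Nua1} and used as input. So there is no argument here to compare against; what matters is whether your scheme actually delivers the result on the full parameter range.

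It does not. The partition identity you write down is exact, but the separate convergence of its pieces fails for $b\le -\tfrac12$, and such values are allowed by the hypotheses (take any $a>|b|-1$). Already for $f(x)=x^2/2$, where $f''\equiv 1$ and $R_\pi=0$, one has
\[
E|Q_\pi|^2=2\sum_{i,j}\langle h_i,h_j\rangle_{{\mathcal H}_{a,b}}^2\ \ge\ 2\sum_i\|h_i\|_{{\mathcal H}_{a,b}}^4\ \asymp\ \sum_i s_{i+1}^{2a}(s_{i+1}-s_i)^{2(1+b)}
\]
by Lemma~\ref{lem3.0}, and on a uniform partition of $[\epsilon,t]$ this is of order $|\pi|^{2(1+b)-1}$, which does not tend to zero when $2(1+b)\le 1$. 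Correspondingly the discrete Skorohod sums $\sum_i\delta^{a,b}\bigl(f'(B^{a,b}_{s_i})h_i\bigr)$ cannot converge in $L^2$ either: in your exact identity the two divergences cancel, but neither limit exists on its own, and invoking closedness of $\delta^{a,b}$ is circular because closedness requires the sequence $\delta^{a,b}(u^\pi)$ to converge \emph{a priori}. This is the same obstruction one meets for fractional Brownian motion with Hurst index $H\le\tfrac14$.

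The argument in~\cite{Nua1} sidesteps discretisation of the integrand. One verifies the formula by duality: for smooth cylindrical $G$ one checks directly, using only $E[(B^{a,b}_s)^2]=s^{1+a+b}$ and Gaussian integration by parts, that
\[
E\Bigl[\Bigl(f(B^{a,b}_t)-f(0)-\tfrac{1+a+b}{2}\int_0^t f''(B^{a,b}_s)\,s^{a+b}\,ds\Bigr)G\Bigr]
=E\bigl\langle D^{a,b}G,\ f'(B^{a,b}_\cdot)1_{[0,t]}\bigr\rangle_{{\mathcal H}_{a,b}},
\]
which is the defining relation for the Skorohod integral; no Riemann sums enter. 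Your reduction in the first paragraph (approximation by $C_b^\infty$ under $\beta<\tfrac14 T^{-(1+a+b)}$) is correct and is exactly what is needed; only the partition step must be replaced.
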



\section{Some basic estimates}\label{sec3}

For simplicity throughout this paper we let $C$ stand for a positive constant depending only on the subscripts and its value may be different in different appearance, and this assumption is also adaptable to $c$. Moreover, the notation $F\asymp G$ means that there are positive constants $c_1$ and $c_2$
so that
$$
c_1G(x)\leq F(x)\leq c_2G(x)
$$
in the common domain of definition for $F$ and $G$. For $x,y\in
\mathbb{R}$, $x\wedge y:=\min\{x,y\}$ and $x\vee y:=\max\{x,y\}$.
\begin{lemma}\label{lem3.0}
Let $a>-1,\;|b|<1,\;|b|<a+1$. We then have
\begin{equation}\label{sec3-eq3.1}
Q(t,s):=E\left[\left(B^{a,b}_t-B^{a,b}_s\right)^2\right]\asymp (t\vee
s)^a|t-s|^{1+b}
\end{equation}
for $s,t\geq 0$. In particular, we have
\begin{equation}\label{sec3-eq3.2}
E\left[\left(B^{a,b}_t-B^{a,b}_s\right)^2\right]\leq
C_{a,b}|t-s|^{1+a+b}
\end{equation}
for $a\leq 0$.
\end{lemma}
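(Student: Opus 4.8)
The plan is to establish the two-sided bound \eqref{sec3-eq3.1} by computing $Q(t,s)$ explicitly from the covariance \eqref{sec2-eq2.1} and then estimating the resulting integral. Assume without loss of generality that $0\le s<t$. Expanding the square and using \eqref{sec2-eq2.1}, I would write
\[
Q(t,s)=R^{a,b}(t,t)-2R^{a,b}(t,s)+R^{a,b}(s,s)
=\frac1{2{\mathbb B}(a+1,b+1)}\Bigl\{2\int_0^t u^a(t-u)^b\,du-2\int_0^s u^a[(t-u)^b+(s-u)^b]\,du+2\int_0^s u^a(s-u)^b\,du\Bigr\}.
\]
After cancelling the repeated $\int_0^s u^a(s-u)^b\,du$ terms, the bracket collapses to
\[
2\int_s^t u^a(t-u)^b\,du+2\int_0^s u^a\bigl[(t-u)^b-(s-u)^b\bigr]\,du,
\]
so that $Q(t,s)$ is the sum of two pieces, which I will call $I_1$ and $I_2$. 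The first piece $I_1=\int_s^t u^a(t-u)^b\,du$ is the genuinely singular part; the second piece $I_2$ involves the difference of the two powers and, for $-1<b<0$, has a definite sign (since $(t-u)^b<(s-u)^b$ on $0\le u<s$, making $I_2<0$). The heart of the matter is to show that, after adding $I_1$ and $I_2$, the total behaves like $(t\vee s)^a|t-s|^{1+b}=t^a(t-s)^{1+b}$.

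For the main term $I_1$, I would substitute $u=t-(t-s)v$ (or $u=s+(t-s)w$) to factor out the scale $|t-s|^{1+b}$, giving $I_1=(t-s)^{1+b}\int_0^1 (s+(t-s)w)^a w^b\,dw$; the remaining integral is then bounded above and below by constants times $t^a$ (upper bound using $s+(t-s)w\le t$ when $a\le 0$, or monotonicity when $a\ge 0$; lower bound by restricting $w$ to a fixed subinterval). This already yields $I_1\asymp t^a(t-s)^{1+b}$, which matches the claimed order. The task for $I_2$ is therefore to show it does not destroy this order, i.e. that $|I_2|\le C\,t^a(t-s)^{1+b}$, so that the upper estimate survives, and that when $I_2$ is negative it is not so large as to cancel $I_1$ — establishing the lower bound is where the real care is needed. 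I expect this is most cleanly handled by a scaling/self-similarity argument: since $B^{a,b}$ is $\tfrac12(a+b+1)$-self-similar, one has $Q(t,s)=t^{1+a+b}Q(1,s/t)$, so it suffices to prove the two-sided bound $Q(1,\rho)\asymp(1-\rho)^{1+b}$ for $\rho=s/t\in[0,1)$. Near $\rho=1$ the asymptotic $Q(1,\rho)\sim c\,(1-\rho)^{1+b}$ comes from the dominant contribution of $I_1$ (the integrand concentrates near $u=1$, where $u^a\approx1$), while on any compact subinterval $\rho\in[0,\rho_0]$ the continuous, strictly positive function $Q(1,\rho)$ is trapped between positive constants, both comparable to $(1-\rho)^{1+b}\asymp1$; patching these two regimes gives the uniform equivalence.

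The main obstacle is the lower bound in the regime $\rho\to1$ when $a>0$, where one must confirm that the negative contribution $I_2$ is of strictly smaller order than $I_1$ (namely $o((t-s)^{1+b})$ or at worst a contribution with a strictly smaller constant), so that no cancellation occurs; this requires a careful expansion of $(t-u)^b-(s-u)^b$ rather than a crude bound. For the special inequality \eqref{sec3-eq3.2} with $a\le0$, I would simply note that in $I_1=(t-s)^{1+b}\int_0^1(s+(t-s)w)^a w^b\,dw$ the factor $(s+(t-s)w)^a\le ((t-s)w)^a$ when $a\le0$ (since $s\ge0$), which folds an extra $(t-s)^a$ out front and, after absorbing the convergent $\int_0^1 w^{a+b}\,dw$ (finite because $a+b>-1$), yields the bound $C_{a,b}(t-s)^{1+a+b}$; the analogous estimate for $|I_2|$ follows from the same substitution, so that \eqref{sec3-eq3.2} drops out as a direct consequence of the computation already performed for \eqref{sec3-eq3.1}.
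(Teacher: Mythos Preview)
Your overall strategy---reduce by self-similarity to a one-variable statement and then combine an asymptotic near $\rho=1$ with a compactness argument on $[0,\rho_0]$---is sound and is, in fact, exactly how the paper later handles the key estimate (see \eqref{lem3.1-eq3} in the proof of Lemma~\ref{lem3.4}). Note that the paper itself does not prove Lemma~\ref{lem3.0}; it simply cites Bojdecki {\em et al.}~\cite{Bojdecki0} and Yan {\em et al.}~\cite{Yan3}.

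However, there is an algebraic slip in your expansion that makes you do much more work than necessary. When you expand the bracket
\[
2\int_0^t u^a(t-u)^b\,du-2\int_0^s u^a[(t-u)^b+(s-u)^b]\,du+2\int_0^s u^a(s-u)^b\,du,
\]
the two copies of $\int_0^s u^a(s-u)^b\,du$ (one with a minus sign from the middle term, one with a plus sign at the end) cancel \emph{completely}, leaving just $2\int_s^t u^a(t-u)^b\,du$. Your term $I_2=\int_0^s u^a[(t-u)^b-(s-u)^b]\,du$ is therefore identically zero, not merely small; indeed the paper uses exactly the clean formula $E[(B^{a,b}_{s+\varepsilon}-B^{a,b}_s)^2]=\frac{1}{{\mathbb B}(a+1,b+1)}\int_s^{s+\varepsilon}u^a(s+\varepsilon-u)^b\,du$ in Section~\ref{sec6}. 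All of your concerns about possible cancellation between $I_1$ and $I_2$ in the lower bound (``the main obstacle'') therefore evaporate: you only need to show $\int_\rho^1 v^a(1-v)^b\,dv\asymp(1-\rho)^{1+b}$, which is precisely your scaling argument applied to $I_1$ alone.

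There is also a minor slip in your substitution: with $u=s+(t-s)w$ one has $t-u=(t-s)(1-w)$, so the factor should be $(1-w)^b$, not $w^b$; this does not affect the argument but does change the integrability condition you quote for \eqref{sec3-eq3.2} (you need $a>-1$ and $b>-1$ separately, rather than $a+b>-1$, though both hold here).
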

The estimates~\eqref{sec3-eq3.1} are first considered by
Bojdecki~\cite{Bojdecki0}. The present form is a slight
modification given by Yan et al.~\cite{Yan3}. Thus, Kolmogorov's continuity criterion and the Gaussian property of the process imply that
weighted-fractional Brownian motion is $\gamma$-H\"{o}lder
continuous for any $\gamma<\frac{1+b}2$, where $\frac{1+b}2$ is
called the {\em H\"older continuity index}. It is important to note
that the H\"older continuity index $\frac{1+b}2$ is not equal to the
order $\frac{1+a+b}2$ of the infinitesimal $\sqrt{E[(X_t)^2]}\to 0$
as $t\downarrow 0$ unless $a=0$. However, the H\"{o}lder continuity
index of many popular self-similar Gaussian processes equals to the
order of the infinitesimal such as fractional Brownian motion,
sub-fractional Brownian motion and bi-fractional Brownian motion.

\begin{lemma}\label{lem3.1}
Let $a>-1,\;|b|<1,\;|b|<a+1$. We then have
\begin{equation}\label{sec3-eq3.3}
t^{1+a+b}s^{1+a+b}-\mu^2\asymp (ts)^{a}(t\wedge s)^{1+b}|t-s|^{1+b}
\end{equation}
for all $s,t>0$, where $\mu=E(B^{a,b}_tB^{a,b}_s)$.
\end{lemma}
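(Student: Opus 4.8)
By symmetry I may assume $0<s<t$, so that $t\wedge s=s$ and $t\vee s=t$; write $\sigma_t^2=E[(B^{a,b}_t)^2]=t^{1+a+b}$, $\sigma_s^2=s^{1+a+b}$ and $Q=Q(t,s)$. The plan begins with a purely algebraic identity. Decomposing $B^{a,b}_t=B^{a,b}_s+(B^{a,b}_t-B^{a,b}_s)$ and setting $P:=E[B^{a,b}_s(B^{a,b}_t-B^{a,b}_s)]=\mu-\sigma_s^2$, one checks $Q=\sigma_t^2-\sigma_s^2-2P$, whence
\[
t^{1+a+b}s^{1+a+b}-\mu^2=\sigma_t^2\sigma_s^2-(\sigma_s^2+P)^2=\sigma_s^2Q-P^2 .
\]
Equivalently the left-hand side equals $\sigma_s^2Q(1-\rho^2)$, where $\rho=P/\sqrt{\sigma_s^2Q}$ is the correlation coefficient of the Gaussian pair $(B^{a,b}_s,\,B^{a,b}_t-B^{a,b}_s)$. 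Since $\sigma_s^2=s^a s^{1+b}$ and, by Lemma~\ref{lem3.0}, $Q\asymp t^a(t-s)^{1+b}$, we get $\sigma_s^2Q\asymp (ts)^a s^{1+b}(t-s)^{1+b}$, which is exactly the target right-hand side. Thus the lemma reduces to showing that $1-\rho^2$ is bounded above and below by positive constants, uniformly in $0<s<t$.

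The upper bound is immediate: $P^2\ge 0$ gives $\sigma_t^2\sigma_s^2-\mu^2\le\sigma_s^2Q$, which is at most a constant multiple of the right-hand side. For the matching lower bound I must prove $\sup\rho^2<1$. Here I exploit the $\tfrac12(1+a+b)$-self-similarity of $B^{a,b}$: the quantities $P$, $\sigma_s^2$, $Q$ all scale like $s^{1+a+b}$ under $(s,t)\mapsto(cs,ct)$, so $\rho$ depends only on the ratio $r=t/s\ge 1$, and I may normalize $s=1$, $t=r$. Using \eqref{sec2-eq2.1} this yields the explicit formula
\[
P(r)=\frac{1}{2\mathbb{B}(a+1,b+1)}\int_0^1 u^a\bigl[(r-u)^b-(1-u)^b\bigr]\,du,
\]
together with $\sigma_1^2=1$ and $Q(r,1)\asymp r^a(r-1)^{1+b}$.

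The heart of the argument is the behaviour of $\rho(r)^2=P(r)^2/Q(r,1)$ at the two ends $r\downarrow 1$ and $r\to\infty$. As $r\downarrow 1$, splitting the integral at $1-u\sim r-1$ and using $(1-u)^b-(r-u)^b\approx -b(1-u)^{b-1}(r-1)$ away from the singularity, one finds $|P(r)|\asymp (r-1)^{1+b}$ (for either sign of $b$ one gets an exponent strictly larger than that of $Q$), whence $\rho(r)^2\to 0$ since $1+b>0$. As $r\to\infty$, elementary estimates of the integral give $P(r)\to-\tfrac12$ when $b<0$ and $P(r)\asymp r^b$ when $b>0$, while $Q(r,1)\asymp r^{1+a+b}$; in both cases $\rho(r)^2\to 0$, using $1+a+b>0$ and $b<a+1$. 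On any compact subinterval of $(1,\infty)$ the continuous function $\rho(r)^2$ is strictly below $1$, since $B^{a,b}_s$ and $B^{a,b}_t$ are linearly independent in $L^2(\Omega)$ for $s\ne t$ (non-degeneracy of the covariance, also contained in the strong local nondeterminism of Yan et al.~\cite{Yan3}). Combining the two vanishing limits with this compact-set bound gives $\sup_{r>1}\rho(r)^2=:1-c<1$, so that $\sigma_t^2\sigma_s^2-\mu^2\ge c\,\sigma_s^2Q$, and rescaling back by self-similarity finishes the proof.

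I expect the main obstacle to be precisely this uniform bound $\sup_{r>1}\rho(r)^2<1$, and within it the sharp endpoint asymptotics of the singular integral defining $P(r)$ as $r\downarrow 1$, where the contribution from the neighbourhood of the singularity $u=1$ must be weighed accurately against the estimate of Lemma~\ref{lem3.0}.
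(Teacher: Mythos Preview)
Your argument is correct and reaches the conclusion by a genuinely different decomposition than the paper's. The paper sets $x=s/t\in(0,1]$, writes $t^{1+a+b}s^{1+a+b}-\mu^2=t^{2(1+a+b)}G(x)$, and then factors $G$ as a \emph{difference of squares}
\[
G(x)=\tfrac1{4K_{a,b}^2}\,G_1(x)\,G_2(x),\qquad G_{1,2}(x)=2K_{a,b}x^{(1+a+b)/2}\mp\Bigl(\int_0^xu^a(1-u)^bdu+K_{a,b}x^{1+a+b}\Bigr),
\]
showing directly that $G_2(x)\asymp x^{(1+a+b)/2}$ and $G_1(x)\asymp x^{(1+a+b)/2}(1-x)^{1+b}$ via their limits at $x=0,1$ and continuity. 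Your route instead writes $\sigma_t^2\sigma_s^2-\mu^2=\sigma_s^2Q(1-\rho^2)$, feeds Lemma~\ref{lem3.0} into the factor $\sigma_s^2Q$, and reduces everything to the uniform bound $\sup_{r>1}\rho(r)^2<1$ obtained by endpoint asymptotics plus compactness. Both proofs use self-similarity to drop to one parameter and a continuity/compactness step; yours is more probabilistic and reuses Lemma~\ref{lem3.0}, while the paper's algebraic factorisation is self-contained and yields the two-sided estimate in a single stroke without appealing to that lemma.

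One small imprecision to clean up: your stated asymptotic $|P(r)|\asymp(r-1)^{1+b}$ as $r\downarrow1$ is only correct for $b<0$ (this is essentially Lemma~\ref{lem3.4}). For $b>0$ the Taylor contribution away from the singularity dominates and gives $P(r)\asymp(r-1)$, as in the computation following~\eqref{sec1-eq1-3}. Your parenthetical remark already anticipates this: in either case the exponent of $P^2$ exceeds that of $Q$ (namely $2(1+b)>1+b$ when $b<0$ and $2>1+b$ when $0<b<1$), so $\rho(r)^2\to0$ and the argument goes through unchanged.
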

\begin{proof}
Without loss of generality we may assume that $t>s>0$. Then
\begin{align*}
t^{1+a+b}&s^{1+a+b}-\mu^2\\
&=t^{1+a+b}s^{1+a+b}-\frac1{4{\mathbb B}^2(1+a,1+b)}\left(\int_0^su^a\left((t-u)^b+(s-u)^b\right)du\right)^2\\
&\equiv t^{2(1+a+b)}G(x)
\end{align*}
with $x=\frac{s}{t}$, where
$$
G(x)=x^{1+a+b}-\frac1{4{\mathbb B}^2(1+a,1+b)}\left(\int_0^xu^a(1-u)^bdu+{\mathbb B}(1+a,1+b)x^{1+a+b}\right)^2
$$
with $x\in [0,1]$. Noting that
$$
t^{1+a+b}s^{1+a+b}-\mu^2\geq 0
$$
and
$$
t^{1+a+b}s^{1+a+b}-\mu^2=0\qquad\Longleftrightarrow\qquad s=t\quad {\text {or }}\quad s=0
$$
for all $t\geq s>0$, we see that
$G(x)\geq 0$ and
$$
G(x)=0\qquad \Longleftrightarrow \qquad x=0\quad{\text {or }}\quad x=1.
$$
Decompose $G(x)$ as follows
\begin{align*}
G(x)&=\frac1{4(K_{a,b})^2}\left(2K_{a,b} x^{\frac{1+a+b}2}-\int_0^xu^a(1-u)^bdu-K_{a,b}x^{1+a+b}\right)\\
&\qquad\quad\cdot\left(2K_{a,b} x^{\frac{1+a+b}2}+\int_0^xu^a(1-u)^bdu+K_{a,b}x^{1+a+b}\right)\\
&\equiv \frac1{4(K_{a,b})^2}G_1(x)G_2(x)
\end{align*}
for all $x\in [0,1]$, where $K_{a,b}={\mathbb B}(1+a,1+b)$.

Obviously, we have
$$
K_{a,b} x^{\frac{1+a+b}2}\leq G_2(x)\leq C_{a,b}x^{\frac{1+a+b}2}
$$
for all $x\in [0,1]$. In fact, the left inequality is clear, and the right inequality follows from
the fact
$$
\int_0^xv^a(1-v)^bdv\leq \int_0^xv^a\left((x-v)^b+1\right)dv\leq K_{a,b}x^{1+a+b}+\frac1{1+a}x^{1+a}
$$
for all $x\in [0,1]$. On the other hand, we also have
\begin{align*}
\lim_{x\uparrow 1}\frac{G_1(x)}{x^{\frac{1+a+b}2}(1-x)^{1+b}}=\frac1{1+b},\qquad \lim_{x\downarrow 0}\frac{G_1(x)}{x^{\frac{1+a+b}2}(1-x)^{1+b}}=2{\mathbb B}(1+a,1+b),
\end{align*}
which deduces
$$
G_1(x)\asymp x^{\frac{1+a+b}2}(1-x)^{1+b}
$$
by the continuity. Thus, we have showed that
$$
G(x)=G_1(x)G_2(x)\asymp x^{1+a+b}(1-x)^{1+b}
$$
and the lemma follows.
\end{proof}
\begin{lemma}\label{lem3.3}
Let $t>s>t'>s'>0$ and let $-1<b<1$, $a>-1$, $|b|<1+a$. We then have
\begin{equation}\label{sec3-eq3.6}
\begin{split}
|E(B^{a,b}_{t}-&B^{a,b}_s)(B^{a,b}_{t'}-B^{a,b}_{s'})|
\\
&\leq C_{a,b,\alpha}\left((s')^a\vee s^a\right)^\alpha(tt')^{\frac12a(1-\alpha)} \frac{[(t-s)(t'-s')]^{\alpha+\frac12(1-\alpha)(1+b)} }{(t-t')^{(1-b)\alpha}}
\end{split}
\end{equation}
for all $\alpha\in [0,1]$.
\end{lemma}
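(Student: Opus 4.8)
The plan is to read the right-hand side as a geometric interpolation in $\alpha$ between two endpoint estimates, and to reduce the whole statement to those endpoints. Writing $X:=|E(B^{a,b}_{t}-B^{a,b}_s)(B^{a,b}_{t'}-B^{a,b}_{s'})|$, one checks that the claimed bound is precisely $A^{1-\alpha}B^{\alpha}$ (absorbing constants into $C_{a,b,\alpha}$), where
$$
A:=C_{a,b}(tt')^{\frac a2}[(t-s)(t'-s')]^{\frac{1+b}2},\qquad B:=C_{a,b}\bigl((s')^a\vee s^a\bigr)\frac{(t-s)(t'-s')}{(t-t')^{1-b}} .
$$
Indeed, the exponent of $(t-s)(t'-s')$ recombines as $\tfrac{1+b}2(1-\alpha)+\alpha=\alpha+\tfrac12(1-\alpha)(1+b)$, and the remaining factors match. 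Hence it suffices to prove the two bounds $X\le A$ and $X\le B$ separately; the general statement then follows from the elementary inequality $X=X^{1-\alpha}X^{\alpha}\le A^{1-\alpha}B^{\alpha}$ valid for every $\alpha\in[0,1]$.

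The endpoint $X\le A$ (the case $\alpha=0$) is immediate. By the Cauchy--Schwarz inequality $X\le\sqrt{Q(t,s)}\,\sqrt{Q(t',s')}$, and since $t>s$ and $t'>s'$, Lemma~\ref{lem3.0} gives $Q(t,s)\le C_{a,b}\,t^a(t-s)^{1+b}$ and $Q(t',s')\le C_{a,b}\,(t')^a(t'-s')^{1+b}$, whose product is $A^2$.

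The endpoint $X\le B$ (the case $\alpha=1$) carries the real content. Because the intervals $[s',t']$ and $[s,t]$ are disjoint, the mixed second derivative of the covariance~\eqref{sec2-eq2.1} is smooth off the diagonal, so by the fundamental theorem of calculus
$$
E(B^{a,b}_{t}-B^{a,b}_s)(B^{a,b}_{t'}-B^{a,b}_{s'})=\frac{b}{2{\mathbb B}(a+1,b+1)}\int_s^t\!\!\int_{s'}^{t'}v^a(u-v)^{b-1}\,dv\,du ,
$$
the integral converging because $b>-1$ (at the corner $s=t'$ one reads this as a limit). Taking absolute values and bounding $v^a\le (s')^a\vee s^a$ for $v\in[s',t']$ (valid for both signs of $a$, since $s'\le v\le t'<s$), the problem reduces to the scalar estimate $\int_s^t\int_{s'}^{t'}(u-v)^{b-1}\,dv\,du\le C_b\,(t-s)(t'-s')(t-t')^{b-1}$. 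With the change of variables $x=u-s$, $y=t'-v$ this becomes $J(g,h,k):=\int_0^h\!\int_0^k(g+x+y)^{b-1}\,dy\,dx\le C_b\,(g+h)^{b-1}hk$, where $g=s-t'$, $h=t-s$, $k=t'-s'$ and $g+h=t-t'$. For $0<b<1$ this closes cleanly: integrating out $y$ and using the mean value theorem bounds the inner integral by $(g+x)^{b-1}k$, after which $\int_0^h(g+x)^{b-1}dx\le C_b(g+h)^{b-1}h$ follows by a short case split on whether $h\le g$.

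The main obstacle is this last integral estimate for $-1<b<0$. Here $b-1<-1$, so the integrand is unbounded as the gap $g=s-t'$ shrinks, and the crude mean value reduction degrades near $g=0$ (indeed $\int_0^h(g+x)^{b-1}dx$ diverges at $g=0$), so one must keep the genuinely two-dimensional structure of $J$. I would split the analysis according to the relative sizes of the gap $g$ and the interval lengths $h,k$: in the near-diagonal regime one controls $J$ through the integrable-singularity behaviour of $(g+x+y)^{b-1}$ near the corner, while in the well-separated regime the mean value argument applies; the delicate point is to match both regimes against the single scale $(g+h)^{b-1}hk$, which is tightest when the two increments are of comparable size, as they are in every application of the lemma. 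A secondary technical point is to justify the double-integral representation at the corner $s=t'$, which I would obtain by continuity from the case $s>t'$.
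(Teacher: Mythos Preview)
Your overall architecture matches the paper's exactly: both arguments obtain the $\alpha=0$ endpoint from Cauchy--Schwarz together with Lemma~\ref{lem3.0}, obtain an $\alpha=1$ endpoint by a direct covariance estimate, and then interpolate via $X=X^{1-\alpha}X^{\alpha}\le A^{1-\alpha}B^{\alpha}$ (the paper writes this as $X/A\le (X/A)^{\alpha}$). So the strategy is the same.

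The genuine gap is at the $\alpha=1$ endpoint, and it is not merely the technical difficulty you flag. Your target $B=C\bigl((s')^a\vee s^a\bigr)(t-s)(t'-s')(t-t')^{-(1-b)}$ is \emph{false} for $-1<b<0$. Take $a=0$ (so $B^{0,b}$ is fBm with $H=(1+b)/2<1/2$), and let $s'\downarrow 0$, $t'=\varepsilon$, $s\downarrow\varepsilon$, $t=1+\varepsilon$. A direct computation of the increment covariance gives $X=|\mu|\sim \tfrac12\varepsilon^{1+b}$ as $\varepsilon\to0$, whereas your $B\sim C\varepsilon$; since $1+b<1$ one has $\varepsilon^{1+b}\gg\varepsilon$, so $X\le B$ fails. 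Equivalently, your scalar claim $J(g,h,k)\le C_b(g+h)^{b-1}hk$ is false for $b<0$: with $g=0$, $h=1$, $k=\varepsilon$ one computes $J=\tfrac{1}{b(b+1)}\bigl[(1+\varepsilon)^{b+1}-1-\varepsilon^{b+1}\bigr]\sim c_b\,\varepsilon^{1+b}$, again dominating the right-hand side $C_b\varepsilon$. Thus no amount of case-splitting near the diagonal can rescue the argument: the target itself is wrong. (You were led here by a typo in the displayed inequality~\eqref{sec3-eq3.6}: the exponent of $t-t'$ should be $(1+b)\alpha$, not $(1-b)\alpha$; this is what the paper's own proof actually concludes and what is used downstream, e.g.\ in~\eqref{sec6-eq6.11}.)

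The paper handles the $\alpha=1$ endpoint quite differently and avoids the double integral altogether. For $b<0$ it starts from the \emph{single} integral
\[
|\mu|=\frac{1}{2\mathbb B(1+a,1+b)}\int_{s'}^{t'}u^a\bigl[(s-u)^b-(t-u)^b\bigr]\,du,
\]
writes the bracket as $\dfrac{(t-u)^{-b}-(s-u)^{-b}}{(s-u)^{-b}(t-u)^{-b}}$, bounds the numerator by the elementary inequality $y^{\gamma}-x^{\gamma}\le C_{\gamma,\beta}\,y^{\gamma-\beta}(y-x)^{\beta}$ for $y>x>0$, and then uses $t-u\ge (t-t')^{1-\nu}(t'-u)^{\nu}$ to split the remaining singular factor between $(t-t')$ and $(t'-u)$. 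This produces the family of bounds
\[
|\mu|\le C_{a,b}\bigl((s')^a\vee s^a\bigr)\,\frac{(t-s)^{\beta}(t'-s')^{1+b-\beta\nu}}{(t-t')^{\beta(1-\nu)}},\qquad 0\le\beta\le1,\ 0\le\nu\beta\le 1+b,
\]
and the interpolation with Cauchy--Schwarz then yields the lemma with the correct $(t-t')^{(1+b)\alpha}$. Your double-integral route can in principle be pushed to the same endpoint $|\mu|\le C\bigl((s')^a\vee s^a\bigr)(t-s)^{1+b}(t'-s')^{1+b}(t-t')^{-(1+b)}$, but not to the one you wrote down; if you pursue it, aim for that exponent instead.
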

\begin{proof}
Let $b<0$. Denote
\begin{align*}
\mu(t,s,t',s'):&=E(B^{a,b}_{t}-B^{a,b}_s)(B^{a,b}_{t'}-B^{a,b}_{s'})\\
&=\frac1{2{\mathbb B}(1+a,1+b)}\int_{s'}^{t'}u^a\left[(s-u)^b-(t-u)^b\right]du.
\end{align*}
It follows from the fact
\begin{equation}\label{sec3-eq3.7=0}
\begin{split}
y^\gamma-x^\gamma&=y^\gamma\left(1-(\frac{x}{y})^\gamma\right)\leq C_{\gamma}y^\gamma\left(1-\frac{x}{y}\right)\\
&\leq C_{\gamma,\beta}y^\gamma\left(1-\frac{x}{y}\right)^\beta
\leq C_{\gamma,\beta}y^{\gamma-\beta}(y-x)^\beta
\end{split}
\end{equation}
for $y>x>0$, $\gamma\geq 0$, $0\leq \beta\leq 1$ and the inequality
\begin{align*}
t-u=(t-t')+(t'-u)\geq (t-t')^{1-\nu}(t'-u)^\nu\qquad (0<u<t'<t)
\end{align*}
for all $0\leq \nu\leq 1$ that
\begin{equation}\label{sec3-eq3.8==}
\begin{split}
|\mu(t,s,&t',s')|=\frac1{2{\mathbb B}(1+a,1+b)}\int_{s'}^{t'}u^a\left[(s-u)^b -(t-u)^b\right]du\\
&\leq C_{a,b}\int_{s'}^{t'}u^a\frac{(t-u)^{-b} -(s-u)^{-b}}{
(s-u)^{-b}(t-u)^{-b}}du\leq C_{a,b,\beta}\int_{s'}^{t'}u^a\frac{(t-s)^\beta}{
(s-u)^{-b}(t-u)^\beta}du\\
&\leq C_{a,b}\frac{(t-s)^\beta}{(t-t')^{\beta(1-\nu)}} \int_{s'}^{t'}\frac{u^a}{(t'-u)^{-b+\beta\nu}}du\\
&\leq C_{a,b}\left[(s')^a\vee s^a\right]\frac{(t-s)^\beta(t'-s')^{1+b-\beta\nu}}{(t-t')^{\beta(1-\nu)}}
\end{split}
\end{equation}
for all $0\leq \beta\leq 1$ and $0\leq \nu\beta\leq 1+b$.

On the other hand, noting that
\begin{align*}
|E[(B^{a,b}_t-B^{a,b}_s)(B^{a,b}_{t'}-B^{a,b}_{s'})]|^2&\leq E\left[(B^{a,b}_t-B^{a,b}_s)^2\right]E\left[(B^{a,b}_{t'} -B^{a,b}_{s'})^2\right]\\
&\leq C_{a,b}(tt')^a(t-s)^{1+b}(t'-s')^{1+b},
\end{align*}
we see that
\begin{align*}
&\frac{|E[(B^{a,b}_t-B^{a,b}_s)(B^{a,b}_{t'}-B^{a,b}_{s'})]|}{ {\sqrt{C_{a,b}(tt')^a(t-s)^{1+b}(t'-s')^{1+b}}}}\leq
\left(\frac{|E[(B^{a,b}_t-B^{a,b}_s)(B^{a,b}_{t'}-B^{a,b}_{s'})]|}{ {\sqrt{C_{a,b}(tt')^a(t-s)^{1+b}(t'-s')^{1+b}}}}\right)^\alpha
\end{align*}
for all $\alpha\in [0,1]$. Combining this with~\eqref{sec3-eq3.8==} (taking $\beta=1+b$ and $\nu=0$), we get
\begin{align*}
|E[(B^{a,b}_t-&B^{a,b}_s)(B^{a,b}_{t'}-B^{a,b}_{s'})]|\\
&\leq
C_{a,b,\alpha}\left((s')^a\vee s^a\right)^\alpha(tt')^{\frac12a(1-\alpha)} \frac{[(t-s)(t'-s')]^{\alpha+\frac12(1-\alpha)(1+b)} }{(t-t')^{(1+b)\alpha}}
\end{align*}
and the lemma follows for all $\alpha\in [0,1]$. Similarly, we can show that the lemma holds for $b>0$.
\end{proof}

\begin{lemma}\label{lem3.4}
For $a>-1$, $-1<b<0$ and $|b|<1+a$ we have
\begin{align}\label{lem3.4-eq1}
&E[B^{a,b}_t(B^{a,b}_s-B^{a,b}_r)]\leq C_{a,b}(s-r)^{1+b}s^a\\   \label{lem3.4-eq2}
&E[B^{a,b}_s(B^{a,b}_t-B^{a,b}_s)]\leq C_{a,b}(t-s)^{1+b}s^a\\   \label{lem3.4-eq3}
&E[B^{a,b}_s(B^{a,b}_s-B^{a,b}_r)]\leq C_{a,b}(s-r)^{1+b}s^a\\   \label{lem3.4-eq4}
&E[B^{a,b}_{s}(B^{a,b}_t-B^{a,b}_r)]\leq C_{a,b}(t-r)^{1+b}s^a\\  \label{lem3.4-eq5}
&E[B^{a,b}_{r}(B^{a,b}_t-B^{a,b}_s)]\leq C_{a,b}(t-s)^{1+b}r^a
\end{align}
for all $t>s>r>0$.
\end{lemma}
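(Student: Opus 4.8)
The plan is to reduce each of the five estimates to the explicit covariance formula~\eqref{sec2-eq2.1} and then to a handful of elementary one-dimensional inequalities. Write $K={\mathbb B}(1+a,1+b)$ and recall $\int_0^p u^a(p-u)^b\,du=Kp^{1+a+b}$. Using~\eqref{sec2-eq2.1} together with the ordering $t>s>r>0$, each covariance collapses to a single integral; for instance
$$2K\,E[B^{a,b}_t(B^{a,b}_s-B^{a,b}_r)]=\int_r^s u^a(t-u)^b\,du+K\bigl(s^{1+a+b}-r^{1+a+b}\bigr),$$
$$2K\,E[B^{a,b}_s(B^{a,b}_t-B^{a,b}_s)]=\int_0^s u^a\bigl[(t-u)^b-(s-u)^b\bigr]\,du,$$
and analogously for~\eqref{lem3.4-eq3} and~\eqref{lem3.4-eq5}, the latter localizing to $\int_0^r u^a[(s-u)^b-(t-u)^b]\,du$. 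I will prove the stronger bounds on the absolute values, so the signs (some of these quantities are negative) are irrelevant. Two structural simplifications are free: \eqref{lem3.4-eq3} is the $t=s$ specialization of~\eqref{lem3.4-eq1}, and \eqref{lem3.4-eq4} follows from \eqref{lem3.4-eq2} and \eqref{lem3.4-eq3} by writing $B^{a,b}_t-B^{a,b}_r=(B^{a,b}_t-B^{a,b}_s)+(B^{a,b}_s-B^{a,b}_r)$ and using $(t-s)^{1+b}\vee(s-r)^{1+b}\le(t-r)^{1+b}$.

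For the \emph{adjacent} estimates~\eqref{lem3.4-eq1} and~\eqref{lem3.4-eq3} I treat the two pieces separately. For the integral piece I first use $b<0$ to replace $(t-u)^b$ by the larger $(s-u)^b$, then rescale $u=sv$ to obtain $s^{1+a+b}\int_{r/s}^1 v^a(1-v)^b\,dv$; the required bound amounts to
$$\int_x^1 v^a(1-v)^b\,dv\le C_{a,b}(1-x)^{1+b},\qquad x\in(0,1),$$
which holds because the ratio of the two sides is continuous on $(0,1)$ with finite limits as $x\downarrow0$ (the integral converges since $a>-1$) and as $x\uparrow1$ (equal to $1/(1+b)$). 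The power piece is handled by the companion inequality $1-x^{1+a+b}\le C_{a,b}(1-x)^{1+b}$, proved by the same endpoint-continuity argument; note $1+a+b>0$ by the hypothesis $|b|<1+a$. Rescaling $r=xs$ then yields the factor $s^a(s-r)^{1+b}$ in both cases.

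For the \emph{overlapping/separated} estimates~\eqref{lem3.4-eq2} and~\eqref{lem3.4-eq5} the integrand is a genuine difference of $b$-powers, and the cleanest device is the fundamental theorem of calculus: for $t>s$,
$$(s-u)^b-(t-u)^b=|b|\int_0^{t-s}(s-u+\tau)^{b-1}\,d\tau.$$
Inserting this, applying Fubini, and rescaling $u=sv$ (resp.\ $u=rv$) reduces everything to the single auxiliary bound
$$\int_0^1 v^a(\theta-v)^{b-1}\,dv\le C_{a,b}(\theta-1)^b,\qquad \theta>1,$$
which again follows from continuity plus the limits $\sim(\theta-1)^b/|b|$ as $\theta\downarrow1$ (the mass concentrates near $v=1$) and $O((\theta-1)^{b-1})$ as $\theta\to\infty$. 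For~\eqref{lem3.4-eq2} the scaling gives $\int_0^s u^a(s-u+\tau)^{b-1}\,du\le Cs^a\tau^b$, so integrating over $\tau\in(0,t-s)$ produces the desired $s^a(t-s)^{1+b}$; for~\eqref{lem3.4-eq5} the identical steps give $r^a(t-s)^{1+b}$, where one uses $\sigma+\tau-1\ge\tau$ (with $\sigma=s/r>1$) to carry out the $\tau$-integration.

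The main obstacle throughout is the case $a<0$, which is genuinely allowed (one may have $-1<a<0$ with $|b|<1+a$): then $u^a$ is largest at the left endpoint of each interval, so one cannot simply pull a power of an endpoint out of the integral, and a crude supremum bound would produce $r^a$ where $s^a$ is needed (or diverge as an endpoint tends to $0$). This is exactly where the Cauchy--Schwarz/H\"older bound of Lemma~\ref{lem3.3} is unavailable: \eqref{lem3.4-eq2} and~\eqref{lem3.4-eq5} are the degenerate configurations $s'\to0$ (adjacent, respectively separated intervals sharing the origin) that are excluded from Lemma~\ref{lem3.3} and for which its right-hand side blows up when $a<0$. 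The passage to dimensionless integrals, whose finiteness is secured by $a>-1$ rather than by any bound on $v^a$, is precisely what repairs this and delivers the correct endpoint factor.
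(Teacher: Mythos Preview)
Your proof is correct. For the adjacent estimates~\eqref{lem3.4-eq1}, \eqref{lem3.4-eq3} and the triangle-inequality deduction of~\eqref{lem3.4-eq4}, your argument is essentially the same as the paper's: both rescale to the interval $[0,1]$ and invoke the endpoint-continuity bound $\int_x^1 v^a(1-v)^b\,dv\le C_{a,b}(1-x)^{1+b}$ together with the companion estimate for $1-x^{1+a+b}$.

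Where you genuinely diverge from the paper is in~\eqref{lem3.4-eq2} and~\eqref{lem3.4-eq5}. The paper treats these directly: for~\eqref{lem3.4-eq2} it rewrites the covariance as a difference of two explicit terms and controls each via the asymptotic $\int_0^x v^a(1-v)^b\,dv\asymp x^{1+a}$; for~\eqref{lem3.4-eq5} it splits into the cases $a\ge0$ and $-1<a<0$, handling the second case by a somewhat delicate decomposition of the difference $s^{1+a+b}\int_0^{r/s}-t^{1+a+b}\int_0^{r/t}$. Your route --- writing $(s-u)^b-(t-u)^b=|b|\int_0^{t-s}(s-u+\tau)^{b-1}\,d\tau$, applying Fubini, and reducing everything to the single dimensionless bound $\int_0^1 v^a(\theta-v)^{b-1}\,dv\le C_{a,b}(\theta-1)^b$ --- is cleaner: it handles both signs of $a$ uniformly and makes the mechanism (the $\tau$-integration supplies exactly the missing power of the gap) transparent. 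The price is one extra auxiliary inequality to verify, but that verification is of the same endpoint-continuity type you are already using elsewhere. The paper's approach, on the other hand, stays closer to the explicit Beta-integral structure and reuses the same two asymptotics throughout, at the cost of the case distinction in~\eqref{lem3.4-eq5}.
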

\begin{proof}
Let $t>s>r>0$. By~\eqref{sec3-eq3.7=0} we have
\begin{equation}\label{lem3.4-eq6}
s^{1+a+b}-r^{1+a+b}\leq C_{a,b}s^{a+b}(s-r)\leq C_{a,b}(s-r)^{1+b}s^a
\end{equation}
for all $a>-1$, $-1<b<0$ and $0\leq \beta\leq 1$. Notice that
\begin{equation}\label{lem3.1-eq3}
\int_{x}^1r^a(1-r)^bdr \asymp (1-x)^{1+b}
\end{equation}
for all $x\in [0,1]$ by the continuity and the convergence
\begin{align*}
\lim_{x\to 1}\frac{f(x)}{(1-x)^{1+b}}=\frac1{1+b}
\end{align*}
for all $a,b>-1$. We get
\begin{align*}
\int_r^su^a(t-u)^bdu&\leq \int_r^su^a(s-u)^bdu=s^{1+a+b}\int_{r/s}^1v^a(1-v)^bdv\\
&\leq C_{a,b}s^{1+a+b}(1-\frac{r}s)^{1+b} =C_{a,b}(s-r)^{1+b}s^a
\end{align*}
for $-1<b<0,a>-1$. It follows that
\begin{align*}
E[B^{a,b}_t&(B^{a,b}_s-B^{a,b}_r)]=\frac1{2{\mathbb B}(1+a,1+b)}\left(\int_0^su^a[(t-u)^b+(s-u)^b]du\right.\\
&\hspace{3cm}\left.-
\int_0^ru^a[(t-u)^b+(r-u)^b]du\right)\\
&=\frac1{2{\mathbb B}(1+a,1+b)}\int_r^su^a(t-u)^bdu +\frac12\left(s^{1+a+b}-r^{1+a+b}\right)\\
&\leq C_{a,b}(s-r)^{1+b}s^a.
\end{align*}
This establishes the estimate~\eqref{lem3.4-eq1}. In order to prove the estimate~\eqref{lem3.4-eq2} we have
\begin{equation}\label{lem3.4-eq7}
\int_0^{x}v^a(1-v)^bdv\asymp x^{1+a}
\end{equation}
for all $x\in [0,1]$ by the continuity and the convergence
$$
\lim_{x\downarrow 0}\frac{1}{x^{1+a}}\int_0^xv^a(1-v)^bdv=\frac1{1+a}
$$
for all $a,b>-1$. It follows that
\begin{align*}
|E[B^{a,b}_s(B^{a,b}_t&-B^{a,b}_s)]|=\left|\frac1{2{\mathbb B}(1+a,1+b)}\int_0^su^a[(t-u)^b+(s-u)^b]du-s^{1+a+b}\right|\\
&=\left|\frac1{2{\mathbb B}(1+a,1+b)}\int_0^su^a(t-u)^bdu-\frac12s^{1+a+b}\right|\\
&=\left|\frac{t^{1+a+b}}{2{\mathbb B}(1+a,1+b)}\int_0^{s/t}v^a(1-v)^bdv-\frac12s^{1+a+b}\right|\\
&\leq \frac{1}{2{\mathbb B}(1+a,1+b)}\int_0^{s/t}v^a(1-v)^bdv\left(t^{1+a+b}-s^{1+a+b}\right)\\
&\qquad +s^{1+a+b}\frac{1}{2{\mathbb B}(1+a,1+b)}\left({\mathbb B}(1+a,1+b)-\int_0^{s/t}v^a(1-v)^bdv\right)\\
&\leq C_{a,b} (t-s)^{1+b}s^a,
\end{align*}
which gives~\eqref{lem3.4-eq2}. Similarly, we can prove the estimate~\eqref{lem3.4-eq3}. The estimate~\eqref{lem3.4-eq4} follows from~\eqref{lem3.4-eq2} and~\eqref{lem3.4-eq3}.

Finally, in order to prove~\eqref{lem3.4-eq5} we have
\begin{align*}
|E[B^{a,b}_{r}&(B^{a,b}_t-B^{a,b}_s)]|=\frac1{2{\mathbb B}(1+a,1+b)}\int_0^ru^a[(s-u)^b-(t-u)^b]du\\
&\leq \frac{r^a}{2{\mathbb B}(1+a,1+b)}\int_0^r[(s-u)^b-(t-u)^b]du\\
&\leq \frac{r^a}{2{\mathbb B}(1+a,1+b)}\int_0^s[(s-u)^b-(t-u)^b]du\\
&=C_{a,b}r^a\left(s^{1+b}-t^{1+b}+(t-s)^{1+b}\right)\leq C_{a,b}(t-s)^{1+b}r^a
\end{align*}
for all $a\geq 0$ and $-1<b<0$. Moreover, we also have
\begin{align*}
s^{1+a+b}\Bigl(\int_0^{r/s}& v^a(1-v)^bdv -\int_0^{r/t}v^a(1-v)^bdv\Bigr) =s^{1+a+b}\int_{r/t}^{r/s}v^a(1-v)^bdv\\
&\leq \frac{s^{1+a+b}}{1+b}(\frac{r}t)^a \left(\frac{r}s-\frac{r}{t}\right)^{1+b}\\
&=\frac{s^{1+a+b}}{1+b}(\frac{r}t)^a \frac{(t-s)^{1+b}r^{1+b}}{(ts)^{1+b}}\leq \frac{1}{1+b}r^a (t-s)^{1+b}
\end{align*}
for $-1<a<0$. It follows from~\eqref{lem3.4-eq7} that
\begin{align*}
|E[B^{a,b}_{r}&(B^{a,b}_t-B^{a,b}_s)]|=\frac1{2{\mathbb B}(1+a,1+b)}\int_0^ru^a[(s-u)^b-(t-u)^b]du\\
&=C_{a,b}\left(s^{1+a+b}
\int_0^{r/s}v^a(1-v)^bdv-t^{1+a+b}\int_0^{r/t}v^a(1-v)^bdv\right)\\
&\leq C_{a,b}
s^{1+a+b}
\left(\int_0^{r/s}v^a(1-v)^bdv-\int_0^{r/t}v^a(1-v)^bdv\right)\\
&\qquad +C_{a,b}\int_0^{r/t}v^a(1-v)^bdv
\left(t^{1+a+b}-s^{1+a+b}\right)\\
&\leq C_{a,b} (t-s)^{1+b}r^a+C_{a,b}(\frac{r}{t})^{1+a} \left(t^{1+a+b}-s^{1+a+b}\right)\\
&\leq C_{a,b}(t-s)^{1+b}r^a,
\end{align*}
and the lemma follows.
\end{proof}

Let $\varphi_{t,s}(x,y)$ denote the density function of $(B^{a,b}_t,B^{a,b}_s)$ ($t>s>0$). That is
\begin{equation}\label{sec4-eq4.2}
\varphi_{t,s}(x,y)=\frac1{2\pi\rho_{t,s}}\exp\left\{
-\frac{1}{2\rho^2_{t,s}}\left(s^{1+a+b}x^2-2\mu_{t,s}
xy+t^{1+a+b}y^2\right)\right\},
\end{equation}
where $\mu_{t,s}=E(B^{a,b}_tB^{a,b}_s)$, $E\left[(B^{a,b}_t)^2\right]=t^{1+a+b}$ and
$\rho^2_{t,s}=(ts)^{1+a+b}-\mu_{t,s}^2$.

\begin{lemma}\label{lem3.5}
Let $a>-1,\;|b|<1,\;|b|<a+1$. If $f\in C^1({\mathbb R})$ admits compact support, we then have
\begin{align}\label{lemma3.5-1}
|Ef'(B^{a,b}_{s})f'(B^{a,b}_{r})|&\leq
\left(\frac{(rs)^{\frac12(1+a+b)}}{\rho^2_{s,r}} +\frac{\mu_{s,r}}{\rho^2_{s,r}}\right)\left(E[f^2(B^{a,b}_s)]
+E[f^2(B^{a,b}_r)]\right)\\   \label{lemma3.5-2}
|Ef''(B^{a,b}_{s})f(B^{a,b}_{r})|&\leq
\frac{r^{1+a+b}}{\rho^2_{s,r}}\left(E[f^2(B^{a,b}_s)]
+E[f^2(B^{a,b}_r)]\right)
\end{align}
for all $s,r>0$.
\end{lemma}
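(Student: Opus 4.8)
The plan is to move the derivatives off $f$ and onto the Gaussian density $\varphi_{s,r}$ of $(B^{a,b}_s,B^{a,b}_r)$ from~\eqref{sec4-eq4.2} by integration by parts, and then reduce everything to two clean conditional second moments. Fix $s\neq r$, so that $\rho^2_{s,r}>0$ by Lemma~\ref{lem3.1}, abbreviate $\mu=\mu_{s,r}$, $\rho=\rho_{s,r}$, and set
$$
A=r^{1+a+b}B^{a,b}_s-\mu B^{a,b}_r,\qquad C=s^{1+a+b}B^{a,b}_r-\mu B^{a,b}_s ,
$$
so that differentiating the explicit density gives $\partial_x\varphi_{s,r}=-\rho^{-2}A\,\varphi_{s,r}$ and $\partial_y\varphi_{s,r}=-\rho^{-2}C\,\varphi_{s,r}$, where $x,y$ are the arguments attached to $B^{a,b}_s,B^{a,b}_r$. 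The one structural fact behind both estimates is that $A$ is (up to the factor $r^{1+a+b}$) the residual of $B^{a,b}_s$ after regressing on $B^{a,b}_r$, and symmetrically for $C$; computing with the conditional laws yields
$$
E[A^2\mid B^{a,b}_r]=r^{1+a+b}\rho^2,\qquad E[C^2\mid B^{a,b}_s]=s^{1+a+b}\rho^2 ,
$$
both \emph{independent} of the conditioning variable. I also note $\mu>0$.

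For~\eqref{lemma3.5-1}, since $f$ has compact support I would integrate by parts once in each variable to obtain $E[f'(B^{a,b}_s)f'(B^{a,b}_r)]=\int f(x)f(y)\,\partial^2_{xy}\varphi_{s,r}\,dx\,dy$, and then read off from the density that $\partial^2_{xy}\varphi_{s,r}=(\rho^{-4}AC+\rho^{-2}\mu)\varphi_{s,r}$. The term carrying $\rho^{-2}\mu$ is harmless, since $|E[f(B^{a,b}_s)f(B^{a,b}_r)]|\le\tfrac12(E[f^2(B^{a,b}_s)]+E[f^2(B^{a,b}_r)])$. For the cross term the key is the pairing: writing $\rho^{-4}E[AC\,f(B^{a,b}_s)f(B^{a,b}_r)]=\rho^{-4}E[(Cf(B^{a,b}_s))(Af(B^{a,b}_r))]$ and applying Cauchy--Schwarz to \emph{this} grouping, then conditioning on $B^{a,b}_s$ in the first factor and on $B^{a,b}_r$ in the second, the two clean moments above give $E[C^2f^2(B^{a,b}_s)]=s^{1+a+b}\rho^2E[f^2(B^{a,b}_s)]$ and $E[A^2f^2(B^{a,b}_r)]=r^{1+a+b}\rho^2E[f^2(B^{a,b}_r)]$. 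The square roots multiply to exactly $(rs)^{(1+a+b)/2}/\rho^2$, and AM--GM gives~\eqref{lemma3.5-1}.

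For~\eqref{lemma3.5-2} (whose left-hand side presupposes $f\in C^2$) I would integrate by parts twice in the first variable to get $E[f''(B^{a,b}_s)f(B^{a,b}_r)]=\int f(x)f(y)\,\partial_x^2\varphi_{s,r}\,dx\,dy$ with $\partial_x^2\varphi_{s,r}=(\rho^{-4}A^2-\rho^{-2}r^{1+a+b})\varphi_{s,r}$. Here the naive splitting $|f(B^{a,b}_s)f(B^{a,b}_r)|\le\tfrac12(f^2(B^{a,b}_s)+f^2(B^{a,b}_r))$ fails, because pairing $A^2$ against $f^2(B^{a,b}_s)$ leaves an uncontrolled weighted moment of the form $E[f^2(B^{a,b}_s)(B^{a,b}_s)^2]$. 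The remedy is to preserve the cancellation between the two terms: with $\sigma_A^2:=E[A^2\mid B^{a,b}_r]=r^{1+a+b}\rho^2$ one has $\rho^{-4}A^2-\rho^{-2}r^{1+a+b}=\rho^{-2}r^{1+a+b}(A^2/\sigma_A^2-1)$, and conditionally on $B^{a,b}_r$ the variable $A/\sigma_A$ is standard normal, so $A^2/\sigma_A^2-1$ is the second Hermite polynomial, orthogonal to constants with conditional $L^2$-norm $\sqrt2$. Conditioning on $B^{a,b}_r$ and applying Cauchy--Schwarz in $A/\sigma_A$, then Cauchy--Schwarz in $B^{a,b}_r$, yields $|E[f(B^{a,b}_s)f(B^{a,b}_r)(A^2/\sigma_A^2-1)]|\le\sqrt2\,(E[f^2(B^{a,b}_s)]E[f^2(B^{a,b}_r)])^{1/2}$, whence the claimed bound $\rho^{-2}r^{1+a+b}(E[f^2(B^{a,b}_s)]+E[f^2(B^{a,b}_r)])$ after AM--GM.

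I expect the evaluation of $\partial^2_{xy}\varphi_{s,r}$ and $\partial_x^2\varphi_{s,r}$ from the density, together with the verification of the two conditional-variance identities, to be routine. The genuinely delicate point is the second estimate: one must organize the Cauchy--Schwarz step so as not to destroy the cancellation that removes the spurious $(B^{a,b}_s)^2$ weight, which is exactly what the Hermite/residual reformulation $\rho^{-4}A^2-\rho^{-2}r^{1+a+b}=\rho^{-2}r^{1+a+b}(A^2/\sigma_A^2-1)$ accomplishes.
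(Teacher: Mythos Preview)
Your proof is correct and mirrors the paper's argument almost exactly for~\eqref{lemma3.5-1}: the paper also integrates by parts, writes out $\partial^2_{xy}\varphi_{s,r}=(\rho^{-4}AC+\rho^{-2}\mu)\varphi_{s,r}$, and bounds the cross term via Cauchy--Schwarz with the same grouping, invoking precisely your conditional-variance identity (stated there as $\int f^2(y)(x-\mu r^{-(1+a+b)}y)^2\varphi_{s,r}\,dx\,dy=\rho^2 r^{-(1+a+b)}E[f^2(B^{a,b}_r)]$). For~\eqref{lemma3.5-2} the paper says only ``similarly'', and your Hermite/residual reformulation $\rho^{-4}A^2-\rho^{-2}r^{1+a+b}=\rho^{-2}r^{1+a+b}(A^2/\sigma_A^2-1)$ is a clean and correct way to supply the missing details; your observation that the naive triangle-inequality split would produce an uncontrolled $E[(B^{a,b}_s)^2f^2(B^{a,b}_s)]$ term is well taken and explains why the cancellation must be preserved.
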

\begin{proof}
Elementary computation shows that
\begin{align*}
\int_{\mathbb{R}^2}f^2(y)&(x-\frac{\mu_{t,s}}{r^{1+a+b}}
y)^2\varphi_{s,r}(x,y)dxdy\\
&=\frac{\rho^2_{s,r}}{r^{1+a+b}}\int_{\mathbb{R}}f^2(y)
\frac{1}{\sqrt{2\pi}r^{(1+a+b)/2}}
e^{-\frac{y^2}{2r^{1+a+b}}}dy=\frac{\rho^2_{s,r}}{r^{1+a+b}}
E|f(B^{a,b}_r)|^2,
\end{align*}
which implies that
\begin{align*}
\frac1{\rho^4_{s,r}}\int_{\mathbb{R}^2}|f(x)f(y)
(&s^{1+a+b}y-\mu_{s,r}x)(r^{1+a+b}x-\mu_{s,r}y)|\varphi_{s,r}(x,y)dxdy\\
&\leq \frac{r^{(1+a+b)/2}s^{(1+a+b)/2}}{\rho^2_{s,r}}\left(
E|f(B^{a,b}_s)|^2E|f(B^{a,b}_r)|^2\right)^{1/2}
\end{align*}
for all $s,r>0$. It follows that
\begin{align*}
|E&[f'(B^{a,b}_{s})f'(B^{a,b}_{r})]|=|\int_{\mathbb{R}^2}
f(x)f(y)\frac{\partial^{2}}{\partial x\partial
y}\varphi_{s,r}(x,y)dxdy|\\
&=|\int_{\mathbb{R}^2} f(x)f(y)\left\{\frac1{\rho^4_{s,r}}(s^{1+a+b}y-\mu_{s,r}x)(r^{1+a+b}x -\mu_{s,r}y)+\frac{\mu_{s,r}}{\rho^2_{s,r}}\right\}\varphi_{s,r}(x,y)dxdy|\\
&\leq \left(\frac{(rs)^{(1+a+b)/2}}{\rho^2_{s,r}} +\frac{\mu_{s,r}}{\rho^2_{s,r}}\right)\left(
E|f(B^{a,b}_s)|^2E|f(B^{a,b}_r)|^2\right)^{1/2}.
\end{align*}
Similarly, one can show that the estimate~\eqref{lemma3.5-2} holds.
\end{proof}


\section{The generalized quadratic covariation, an example} \label{sec6}
In this section, for $|b|<1$ we consider an example of Borel functions such that the generalized quadratic covariation exists. Recall that
$$
J_\varepsilon(f,t)=\frac{1+a+b}{\varepsilon^{1+b}} \int_\varepsilon^{t+\varepsilon}\left\{ f(B^{a,b}_{s+\varepsilon})
-f(B^{a,b}_s)\right\}(B^{a,b}_{s+\varepsilon}-B^{a,b}_s)s^{b}ds
$$
for all $\varepsilon>0$ and all Borel functions $f$, and the generalized quadratic covariation defined as follows
$$
[f(B^{a,b}),B^{a,b}]^{(a,b)}_t=\lim_{\varepsilon\downarrow 0}J_\varepsilon(f,t),
$$
provided the limit exists uniformly in probability (in short, ucp). For ucp-convergence we have the next result due to Russo {\em et al}~\cite{Russo2}.
\begin{lemma}[Russo {\em et al}~\cite{Russo2}]\label{lemm3.1-1}
Let $\{X^\varepsilon,\;\varepsilon>0\}$ be a family of continuous processes. We suppose
\begin{itemize}
\item For any $\varepsilon>0$, the process $t\mapsto X^\varepsilon_t$ is increasing;
\item There is a continuous process $X=(X_t,t\geq 0)$ such that $X^\varepsilon_t\to X_t$ in probability as $\varepsilon$ goes to zero.
\end{itemize}
Then $Z^\varepsilon$ converges to $X$ ucp.
\end{lemma}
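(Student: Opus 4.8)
The plan is to treat this as a stochastic version of Dini's theorem: monotone (in $t$) convergence to a continuous limit upgrades pointwise convergence to uniform convergence on compacts. First I would record the preliminary observation that the limit $X$ is itself non-decreasing. For fixed rationals $s<t$ the increasing property gives $X^\varepsilon_s\le X^\varepsilon_t$, and passing to an a.s.-convergent subsequence (extracted by the usual diagonal argument over all rational pairs) yields $X_s\le X_t$ a.s.; continuity of $X$ then makes it non-decreasing on all of $[0,\infty)$. This monotonicity of both $X^\varepsilon$ and $X$ is what drives the whole argument.

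The key step is the following deterministic bound, valid on any grid $0=t_0<t_1<\cdots<t_n=T$. For $t\in[t_i,t_{i+1}]$ monotonicity gives $X^\varepsilon_{t_i}\le X^\varepsilon_t\le X^\varepsilon_{t_{i+1}}$ and $X_{t_i}\le X_t\le X_{t_{i+1}}$, whence
$$
\sup_{0\le t\le T}\bigl|X^\varepsilon_t-X_t\bigr|\le \max_{0\le i\le n}\bigl|X^\varepsilon_{t_i}-X_{t_i}\bigr|+\max_{0\le i\le n-1}\bigl(X_{t_{i+1}}-X_{t_i}\bigr).
$$
This reduces the supremum over the continuum to a maximum over finitely many grid points (the first term) plus the maximal oscillation of the continuous limit $X$ across the grid (the second term).

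Then fix $\delta>0$ and $\eta>0$. For the oscillation term I would use that $X$ is a.s. uniformly continuous on the compact $[0,T]$: along a sequence of refining deterministic grids the maximal oscillation $\max_i(X_{t_{i+1}}-X_{t_i})$ decreases to $0$ a.s., so by choosing a sufficiently fine but \emph{fixed} grid I can arrange $P(\max_i(X_{t_{i+1}}-X_{t_i})>\delta/2)<\eta/2$. With this grid now frozen, the hypothesis $X^\varepsilon_{t_i}\to X_{t_i}$ in probability at each of the finitely many points gives $P(\max_i|X^\varepsilon_{t_i}-X_{t_i}|>\delta/2)<\eta/2$ for all small $\varepsilon$. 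Combining these with the displayed bound yields $P(\sup_{t\le T}|X^\varepsilon_t-X_t|>\delta)<\eta$ for small $\varepsilon$, which is precisely ucp-convergence since $\delta,\eta$ were arbitrary. The main obstacle is exactly this middle point: one must control the \emph{random} oscillation of $X$ by a single deterministic grid, and the resolution is that a.s. uniform continuity of the continuous limit lets the fixed-grid oscillation probabilities be made small, thereby decoupling the grid choice from $\varepsilon$.
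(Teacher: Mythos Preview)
Your proof is correct: the Dini-type argument (monotone approximants, monotone continuous limit, grid bound controlling the supremum by finitely many pointwise errors plus the oscillation of the limit) is exactly the standard way to establish this lemma, and each step is sound. Note, however, that the paper does not actually prove this statement; it is quoted as a known result from Russo--Vallois~\cite{Russo2} and used as a black box. So there is no ``paper's own proof'' to compare against --- you have supplied the argument the authors chose to cite rather than reproduce.
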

\begin{proposition}\label{prop4.1}
Let $a>-1$, $|b|<1$ and $|b|<1+a$. Then we have
\begin{equation}\label{sec4-eq4.111111}
[B^{a,b},B^{a,b}]^{(a,b)}_t=\kappa_{a,b}t^{1+a+b},
\end{equation}
where
$$
\kappa_{a,b}=\frac1{(1+b){\mathbb B}(a+1,b+1)}.
$$
\end{proposition}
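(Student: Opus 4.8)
The function $f$ here is the identity, so that $J_\varepsilon(f,t)$ reduces to the weighted quadratic variation
$$
J_\varepsilon(\mathrm{id},t)=\frac{1+a+b}{\varepsilon^{1+b}}\int_\varepsilon^{t+\varepsilon}\bigl(B^{a,b}_{s+\varepsilon}-B^{a,b}_s\bigr)^2 s^b\,ds,
$$
which is manifestly nondecreasing in $t$ and whose conjectured limit $t\mapsto\kappa_{a,b}t^{1+a+b}$ is continuous. The plan is therefore to prove convergence in $L^2(\Omega)$ for each fixed $t$ and then invoke Lemma~\ref{lemm3.1-1} (with the monotonicity just noted) to upgrade the pointwise-in-probability limit to ucp convergence. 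Since the limit is deterministic, the $L^2$ claim splits into two pieces: $E[J_\varepsilon(\mathrm{id},t)]\to\kappa_{a,b}t^{1+a+b}$ and $\mathrm{Var}(J_\varepsilon(\mathrm{id},t))\to 0$.

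For the mean I would first compute the increment variance exactly. Writing $K={\mathbb B}(a+1,b+1)$ and using the cancellation $\frac1K\int_0^s u^a(s-u)^b\,du=s^{1+a+b}$ in the covariance~\eqref{sec2-eq2.1}, one finds
$$
E\bigl[(B^{a,b}_{s+\varepsilon}-B^{a,b}_s)^2\bigr]=\frac1K\int_s^{s+\varepsilon}u^a(s+\varepsilon-u)^b\,du=\frac{\varepsilon^{1+b}}{K}\int_0^1(s+\varepsilon v)^a(1-v)^b\,dv.
$$
Substituting this into $E[J_\varepsilon(\mathrm{id},t)]$ cancels the prefactor $\varepsilon^{-(1+b)}$, and as $\varepsilon\downarrow0$ the inner integral converges to $s^a/(1+b)$; dominated convergence (the domination uses $a+b>-1$, which is exactly the hypothesis $|b|<1+a$, to make $s^{a+b}$ integrable near $0$) then yields $E[J_\varepsilon(\mathrm{id},t)]\to\frac{1+a+b}{K}\int_0^t\frac{s^{a+b}}{1+b}\,ds=\kappa_{a,b}t^{1+a+b}$.

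For the variance, Gaussianity of the increments gives, by the Wick formula, $E[\Delta_s^2\Delta_r^2]=E[\Delta_s^2]E[\Delta_r^2]+2(E[\Delta_s\Delta_r])^2$ with $\Delta_s=B^{a,b}_{s+\varepsilon}-B^{a,b}_s$; the factorized term $E[\Delta_s^2]E[\Delta_r^2]$ reproduces $(E[J_\varepsilon(\mathrm{id},t)])^2$ exactly, so that
$$
\mathrm{Var}(J_\varepsilon(\mathrm{id},t))=\frac{2(1+a+b)^2}{\varepsilon^{2(1+b)}}\int_\varepsilon^{t+\varepsilon}\!\int_\varepsilon^{t+\varepsilon}\bigl(E[\Delta_s\Delta_r]\bigr)^2 s^b r^b\,ds\,dr.
$$
On the off-diagonal region $|s-r|>\varepsilon$ I would apply Lemma~\ref{lem3.3} (with $t\to s+\varepsilon$, etc., so that $t-s=t'-s'=\varepsilon$ and $t-t'=|s-r|$); squaring its bound produces a factor $\varepsilon^{2[2\alpha+(1-\alpha)(1+b)]}$ against a kernel $\asymp|s-r|^{-2(1-b)\alpha}$, together with $s,r$-powers that are integrable on the bounded square. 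Combined with the prefactor $\varepsilon^{-2(1+b)}$ the net power of $\varepsilon$ is $\varepsilon^{2\alpha(1-b)}$, which tends to $0$ since $b<1$ and $\alpha>0$. On the diagonal band $|s-r|\le\varepsilon$ I would instead use Cauchy--Schwarz with Lemma~\ref{lem3.0}, bounding $(E[\Delta_s\Delta_r])^2\le E[\Delta_s^2]E[\Delta_r^2]\le C(sr)^a\varepsilon^{2(1+b)}$; the $\varepsilon^{2(1+b)}$ cancels the prefactor and the band has width $O(\varepsilon)$, so this contribution also vanishes (again relying on $a+b>-1$ near the origin).

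The main obstacle is the variance estimate, and within it the diagonal singularity of the covariance kernel: the exponent $2(1-b)\alpha$ must stay below $1$ for the off-diagonal double integral to be bounded uniformly in $\varepsilon$, so one must choose $\alpha\in\bigl(0,\tfrac1{2(1-b)}\bigr)$, which is nonempty precisely because $b<1$. With such an $\alpha$ fixed, both regions give $\mathrm{Var}(J_\varepsilon(\mathrm{id},t))\to0$, hence $J_\varepsilon(\mathrm{id},t)\to\kappa_{a,b}t^{1+a+b}$ in $L^2(\Omega)$ for each $t$; monotonicity in $t$ and Lemma~\ref{lemm3.1-1} then give the ucp convergence, which is~\eqref{sec4-eq4.111111}.
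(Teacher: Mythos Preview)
Your proof is correct and follows essentially the same route as the paper: both reduce to an $L^2$ estimate, invoke the Wick identity~\eqref{sec4-eq4.1-11}, and then control the cross-covariance term $(E[\Delta_s\Delta_r])^2$ by splitting into the diagonal band (Cauchy--Schwarz with Lemma~\ref{lem3.0}) and the off-diagonal region (Lemma~\ref{lem3.3}), finishing with Lemma~\ref{lemm3.1-1} to get ucp convergence from monotonicity.

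The one difference is organizational. The paper centers the integrand at the approximate mean $\kappa_{a,b}\varepsilon^{1+b}s^a$, which produces the extra cross term $h_s(\varepsilon)h_r(\varepsilon)$ in~\eqref{sec6-eq6.10} and forces the separate estimates~\eqref{sec6-eq6.6} on $h_s(\varepsilon)$. Your mean/variance split instead centers at the exact expectation, so after the Wick formula the factorized part cancels identically and only the $(E[\Delta_s\Delta_r])^2$ term survives; the $h_s$ analysis is absorbed into your dominated-convergence argument for $E[J_\varepsilon(\mathrm{id},t)]$. This is a genuine, if modest, simplification. Note also that the paper writes the integrability constraint as $\alpha<\tfrac1{2(1+b)}\wedge1$ (cf.\ the line after~\eqref{sec6-eq6.11}) whereas you take it from the stated exponent $(1-b)\alpha$ in Lemma~\ref{lem3.3}; either form makes the argument go through, since all that is needed is some $\alpha>0$ with the singular kernel integrable and a strictly positive net power of $\varepsilon$.
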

Let now $(X,Y)$ be a $2$-dimensional normal random variable with the density
$$
\varphi(x,y)=\frac1{2\pi}e^{-\frac1{2\rho^2} (\sigma_2^2x^2-2\mu xy+\sigma_1^2y^2)},
$$
where $E[X]=E[Y]=0,\sigma_1^2=E[X^2],\sigma_2^2=E[Y^2],\mu= E[XY]$ and $\rho^2=\sigma_1^2\sigma_2^2-\mu^2$. Then, an elementary calculus can show that
\begin{equation}\label{sec4-eq4.1-11}
E[X^2Y^2]=E[X^2]E[Y^2]+2(E[XY])^{2}.
\end{equation}

Denote by
\begin{align*}
h_s(\varepsilon):=E(B^{a,b}_{s+\varepsilon}-B^{a,b}_s)^2 -\kappa_{a,b}\varepsilon^{1+b}s^a
\end{align*}
for $\varepsilon\in (0,1)$ and $s>0$. By making substitution $u-s=+v\varepsilon$ we have
\begin{align*}
h_s(\varepsilon)&=\frac{1}{{\mathbb B}(a+1,b+1)}\int_s^{s+\varepsilon}u^a(s+\varepsilon-u)^bdu -\kappa_{a,b}\varepsilon^{1+b}s^a\\
&=\frac{\varepsilon^{1+b}}{{\mathbb B}(a+1,b+1)}\int_0^{1}(s+\varepsilon v)^a(1-v)^bdu-\kappa_{a,b}\varepsilon^{1+b}s^a\\
&=\varepsilon^{1+b}\kappa_{a,b}\left((1+b)\int_0^{1}(s+\varepsilon v)^a(1-v)^bdu-s^a\right)\\
&=\frac{\varepsilon^{1+b}}{{\mathbb B}(a+1,b+1)}\int_0^{1}\left\{(s+\varepsilon v)^a-s^a\right\}(1-v)^bdu.
\end{align*}
Notice that
$$
\left|(s+\varepsilon v)^a-s^a\right|=(s+\varepsilon v)^a-s^a\leq
(\varepsilon v)^a
$$
for all $0<a\leq 1,\;\varepsilon>0,\;0\leq v\leq 1$, and
$$
\left|(s+\varepsilon v)^a-s^a\right|=(s+\varepsilon v)^a-s^a\leq
C_a\varepsilon v(s+\varepsilon v)^{a-1}\leq C_a\varepsilon v(s+\varepsilon)^{a-1}
$$
for all $a>1,\;\varepsilon>0,\;0\leq v\leq 1$, and
\begin{align*}
\left|(s+\varepsilon v)^a-s^a\right|&=s^a-(s+\varepsilon v)^a
=s^{a}\left(1-(\frac{s}{s+\varepsilon v})^{-a}\right)\\
&\leq s^{a}\left(1-\frac{s}{s+\varepsilon v}\right)=s^{a}\frac{\varepsilon v}{s+\varepsilon v}
\leq s^a\frac{\varepsilon v}{s^\nu(\varepsilon v)^{1-\nu}}
=(\varepsilon v)^{\nu}s^{a-\nu}
\end{align*}
for all $-1<a<0,\;\varepsilon>0,\;0\leq v\leq 1$ and all $0<\nu<1+a$ by Young's inequality. We get
\begin{equation}\label{sec6-eq6.6}
h_s(\varepsilon)\leq
\begin{cases}
C_{a,b}(s+\varepsilon)^{a-1}\varepsilon^{2+b},& {\text { if $a>1$}},\\
C_{a,b}\varepsilon^{1+b+a},& {\text { if $0<a\leq 1$}},\\
C_{a,b}s^{a-\nu}\varepsilon^{1+b+\nu},& {\text { if $-1<a<0$ \;($0<\nu<1+a$)}}
\end{cases}
\end{equation}
for all $s>0$ and $\varepsilon>0$.

\begin{proof}[Proof of Proposition~\ref{prop4.1}]
Denote
$$
X_\varepsilon(t)=\frac{1+a+b}{\varepsilon^{1+b}} \int_\varepsilon^{t+\varepsilon}
(B^{a,b}_{s+\varepsilon}-B^{a,b}_s)^2s^{b}ds
$$
for all $0<\varepsilon<1$. Then, it is sufficient to show that
$$
E\left(X_\varepsilon(t)-\kappa_{a,b}t^{1+a+b}\right)^2\longrightarrow 0,
$$
as $\varepsilon$ tends to zero. We have
\begin{align*}
X_\varepsilon(t)-\kappa_{a,b}t^{1+a+b} &=\frac{1+a+b}{\varepsilon^{1+b}}\left(\int_\varepsilon^{t+\varepsilon}
(B^{a,b}_{s+\varepsilon}-B^{a,b}_s)^2s^{b}ds -\kappa_{a,b}\frac{\varepsilon^{1+b}t^{1+a+b}}{1+a+b}\right)\\
&=\frac{1+a+b}{\varepsilon^{1+b}}\left(\int_\varepsilon^{t+\varepsilon}
(B^{a,b}_{s+\varepsilon}-B^{a,b}_s)^2s^{b}ds -\kappa_{a,b}\varepsilon^{1+b} \int_\varepsilon^{t+\varepsilon}s^{a+b}ds\right)\\
&\qquad+\kappa_{a,b}\left( (1+a+b) \int_\varepsilon^{t+\varepsilon}s^{a+b}ds- t^{1+a+b}\right)\\
&\equiv \Xi_1(a,b,\varepsilon,t)+\Xi_2(a,b,\varepsilon,t).
\end{align*}
We deduce from above
\begin{equation}\label{sec6-eq6.7}
E\left|X_\varepsilon(t)-\kappa_{a,b}t^{1+a+b}\right|^2
\leq 2E|\Xi_1(a,b,\varepsilon,t)|^2+2|\Xi_2(a,b,\varepsilon,t)|^2
\end{equation}
for $t\geq 0$ and $\varepsilon>0$. Clearly, we have
\begin{align}\notag
|\Xi_2(a,b,\varepsilon,t)|&=\kappa_{a,b}\left| (1+a+b)\int_\varepsilon^{t+\varepsilon}s^{a+b}ds-t^{1+a+b}\right|\\
\label{sec6-eq6.8}
&=\kappa_{a,b}\left|(t+\varepsilon)^{1+a+b}-\varepsilon^{1+a+b} -t^{1+a+b}\right|=O(\varepsilon^{1\wedge (1+a+b)})
\end{align}
for each $t\geq 0$.

Now, let us estimate $E|\Xi_1(a,b,\varepsilon,t)|^2$. We have
\begin{equation}\label{sec6-eq6.9}
\begin{split}
E|\Xi_1(a,b,\varepsilon,t)|^2&=\frac{(1+a+b)^2}{\varepsilon^{2+2b}} E\left(\int_\varepsilon^{t+\varepsilon}
\left\{(B^{a,b}_{s+\varepsilon}-B^{a,b}_s)^2-\kappa_{a,b} \varepsilon^{1+b}s^{a}\right\}s^{b}ds\right)^2\\
&=\frac{(1+a+b)^2}{\varepsilon^{2+2b}} \int_\varepsilon^{t+\varepsilon} \int_\varepsilon^{t+\varepsilon}A_\varepsilon(s,r)(sr)^{b}dsdr
\end{split}
\end{equation}
for each $t\geq 0$, where
\begin{align*}
A_\varepsilon(s,r):&=E\left(
(B^{a,b}_{s+\varepsilon}-B^{a,b}_s)^2-\kappa_{a,b} \varepsilon^{1+b}s^a\right)\left(
(B^{a,b}_{r+\varepsilon}-B^{a,b}_r)^2-\kappa_{a,b} \varepsilon^{1+b}r^a\right)\\
&=E(B^{a,b}_{s+\varepsilon}-B^{a,b}_s)^2(B^{a,b}_{r+\varepsilon} -B^{a,b}_r)^2+(\kappa_{a,b})^2\varepsilon^{2+2b}(sr)^a\\
&\qquad-\kappa_{a,b}\varepsilon^{1+b}
E\left((B^{a,b}_{r+\varepsilon}-B^{a,b}_r)^2s^a +(B^{a,b}_{s+\varepsilon}-B^{a,b}_s)^2r^a\right).
\end{align*}
For all $r,s\geq 0$ and $\varepsilon>0$ by decomposing
\begin{align*}
E(B^{a,b}_{s+\varepsilon}-B^{a,b}_s)^2 =h_s(\varepsilon)+\kappa_{a,b}\varepsilon^{1+b}s^a
\end{align*}
we have
\begin{align*}
E[(B^{a,b}_{s+\varepsilon}-B^{a,b}_s)^2(B^{a,b}_{r+\varepsilon} -&B^{a,b}_r)^2]=
E\left[(B^{a,b}_{s+\varepsilon}-B^{a,b}_s)^2\right]
E\left[(B^{a,b}_{r+\varepsilon}-B^{a,b}_r)^2\right]\\
&\hspace{2cm}+2\left(E\left[(B^{a,b}_{s+\varepsilon}-B^{a,b}_s)
(B^{a,b}_{r+\varepsilon}-B^{a,b}_r)\right]\right)^{2}\\
&=\left[h_s(\varepsilon)+\kappa_{a,b}\varepsilon^{1+b}s^a\right]
\left[h_r(\varepsilon)+\kappa_{a,b}\varepsilon^{1+b}s^a \right]+2(\mu_{s,r})^2
\end{align*}
by~\eqref{sec4-eq4.1-11}, where $\mu_{s,r}:=E\left[(B^{a,b}_{s+\varepsilon}-B^{a,b}_s)
(B^{a,b}_{r+\varepsilon}-B^{a,b}_r)\right]$, which yields
\begin{equation}\label{sec6-eq6.10}
A_\varepsilon(s,r)=h_s(\varepsilon)h_r(\varepsilon)+2(\mu_{s,r})^2.
\end{equation}
On the other hand, for $0<s-r<\varepsilon\leq 1$ we have
\begin{align*}
(\mu_{s,r})^2&\leq E[(B^{a,b}_{s+\varepsilon}-B^{a,b}_s)^2]E[(B^{a,b}_{r+\varepsilon} -B^{a,b}_{r})^2]\\
&\leq (s+\varepsilon)^a(r+\varepsilon)^a\varepsilon^{2+2b}
\leq (s+\varepsilon)^a(r+\varepsilon)^a\frac{\varepsilon^{2+2b+\gamma} }{(s-r)^\gamma}
\end{align*}
for all $0\leq \gamma\leq 1$. It follows from Lemma~\ref{lem3.3} that
\begin{equation}\label{sec6-eq6.11}
\begin{split}
(\mu_{s,r})^2&\leq C_{a,b}\left(r^a\vee s^a\right)^{2\alpha} [(s+\varepsilon)(r+\varepsilon)]^{a(1-\alpha)} \frac{\varepsilon^{2+2b+2\alpha(1-b)} }{(s-r)^{2(1+b)\alpha}}1_{\{s-r>\varepsilon\}}\\
&\qquad+(s+\varepsilon)^a(r+\varepsilon)^a \frac{\varepsilon^{2+2b+\alpha} }{(s-r)^\alpha}1_{\{s-r\leq \varepsilon\}}
\end{split}
\end{equation}
for all $0<\alpha<\frac1{2(1+b)}\wedge 1$. Combining this with~\eqref{sec6-eq6.6},~\eqref{sec6-eq6.7},~\eqref{sec6-eq6.8}, ~\eqref{sec6-eq6.9} and~\eqref{sec6-eq6.10} we see that
\begin{align*}
E|\Xi_1(a,b,\varepsilon,t)|^2&=\frac{(1+a+b)^2}{\varepsilon^{2+2b}} \int_\varepsilon^{t+\varepsilon} \int_\varepsilon^{t+\varepsilon}A_\varepsilon(s,r)(sr)^{b}drds \longrightarrow 0
\end{align*}
for all $t\geq 0$, as $\varepsilon$ tends to zero, and the proposition follows.
\end{proof}

Recall that the local H\"{o}lder index $\gamma_0$
of a continuous paths process $\{X_t: t\geq 0\}$ is the supremum of
the exponents $\gamma$ verifying, for any $T>0$:
$$
P(\{\omega: \exists L(\omega)>0, \forall s,t \in[0,T],
|X_t(\omega)-X_s(\omega)|\leq L(\omega)|t-s|^\gamma\})=1.
$$
The next lemma is considered by Gradinaru-Nourdin~\cite{Grad3}.

\begin{lemma}\label{Grad-Nourdin}
Let $g:{\mathbb R}\to {\mathbb R}$ be a continuous function satisfying
\begin{equation}\label{eq4.2-Gradinaru--Nourdin}
|g(x)-g(y)|\leq C|x-y|^a(1+x^2+y^2)^b,\quad (C>0,0<a\leq 1,b>0),
\end{equation}
for all $x,y\in {\mathbb R}$ and let $X$ be a locally H\"older
continuous paths process with index $\gamma\in (0,1)$. Assume that
$V$ is a bounded variation continuous paths process. Set
$$
X^{g}_\varepsilon(t)=\int_\varepsilon^{t+\varepsilon} g(\frac{X_{s+\varepsilon}-X_s
}{\varepsilon^\gamma})ds
$$
for $t\geq 0$, $\varepsilon>0$. If for each $t\geq 0$, as
$\varepsilon\to 0$,
\begin{equation}\label{condition}
\|X^{g}_\varepsilon(t)-V_t\|_{L^2}^2=O(\varepsilon^\alpha)
\end{equation}
with $\alpha>0$, then, for any $t\geq 0$, $\lim_{\varepsilon\to
0}X^{g}_\varepsilon(t)=V_t$ almost surely, and if $g$ is
non-negative, for any continuous stochastic process $\{Y_t:\;t\geq
0\}$,
\begin{equation}
\lim_{\varepsilon\to 0}
\int_\varepsilon^{t+\varepsilon} Y_sg(\frac{X_{s+\varepsilon}-X_s}{\varepsilon^\gamma})ds
\longrightarrow \int_0^tY_sdV_s,
\end{equation}
almost surely, uniformly in $t$ on each compact interval.
\end{lemma}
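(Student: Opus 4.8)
The plan is to treat the two assertions separately, and for the first to upgrade the given $L^2$-rate to almost sure convergence in two stages: a Borel--Cantelli argument along a deterministic sequence $\varepsilon_n\downarrow 0$, followed by a pathwise \emph{gap-filling} estimate controlling $X^g_\varepsilon(t)$ for $\varepsilon$ lying between consecutive $\varepsilon_n$. The decisive point is the choice of sequence. A geometric sequence is useless here, so I would take $\varepsilon_n=n^{-p}$ with $p>1/\alpha$. On one hand $\sum_n\|X^g_{\varepsilon_n}(t)-V_t\|_{L^2}^2=O(\sum_n n^{-p\alpha})<\infty$, so by Tonelli $\sum_n(X^g_{\varepsilon_n}(t)-V_t)^2<\infty$ almost surely and hence $X^g_{\varepsilon_n}(t)\to V_t$ a.s.; on the other hand $\varepsilon_{n+1}/\varepsilon_n=(1-\tfrac1{n+1})^{p}\to 1$, which is exactly what makes the gap estimate close.

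For the gap-filling I would fix $T\geq t$ and work on $[0,T+2]$, where by the stated local H\"older continuity there is a random constant $L=L(\omega)$ with $|X_u-X_v|\leq L|u-v|^\gamma$. Consequently $|(X_{s+\varepsilon}-X_s)/\varepsilon^\gamma|\leq L$ for every admissible $s$, so the arguments fed into $g$ stay in the fixed random compact $[-L,L]$ and the weight $(1+x^2+y^2)^b$ in~\eqref{eq4.2-Gradinaru--Nourdin} is bounded by $(1+2L^2)^b$. For $\varepsilon\in[\varepsilon_{n+1},\varepsilon_n]$, writing $X_{s+\varepsilon_n}-X_s=(X_{s+\varepsilon_n}-X_{s+\varepsilon})+(X_{s+\varepsilon}-X_s)$, one gets
$$
\Bigl|\tfrac{X_{s+\varepsilon}-X_s}{\varepsilon^\gamma}-\tfrac{X_{s+\varepsilon_n}-X_s}{\varepsilon_n^\gamma}\Bigr|\leq L\bigl(1-(\varepsilon/\varepsilon_n)^\gamma\bigr)+L\bigl(1-\varepsilon/\varepsilon_n\bigr)^\gamma=:\delta_n ,
$$
and since $\varepsilon/\varepsilon_n\in[\varepsilon_{n+1}/\varepsilon_n,1]$ we have $\delta_n\to 0$ \emph{uniformly} over the interval. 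The H\"older condition on $g$ then bounds the integrand difference by $C\delta_n^{a}(1+2L^2)^b$, while the mismatch of the integration windows $[\varepsilon,t+\varepsilon]$ versus $[\varepsilon_n,t+\varepsilon_n]$ contributes at most $2(\varepsilon_n-\varepsilon_{n+1})\sup_{[-L,L]}|g|$. Both tend to $0$, so $\sup_{\varepsilon\in[\varepsilon_{n+1},\varepsilon_n]}|X^g_\varepsilon(t)-X^g_{\varepsilon_n}(t)|\to 0$ a.s., and combined with the sequential limit this yields $\lim_{\varepsilon\to0}X^g_\varepsilon(t)=V_t$ a.s.

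For the second assertion I would exploit $g\geq 0$, which makes $t\mapsto X^g_\varepsilon(t)$ non-decreasing. Applying the first part to every rational $t$ and intersecting the countably many null sets, almost surely $X^g_\varepsilon(q)\to V_q$ for all rational $q$; since each $X^g_\varepsilon(\cdot)$ is monotone and $V$ is continuous, P\'olya's theorem upgrades this to $\sup_{t\in[0,T]}|X^g_\varepsilon(t)-V_t|\to 0$ a.s. Writing $\mu_\varepsilon$ for the non-negative measure with $\mu_\varepsilon((0,t])=X^g_\varepsilon(t)$, uniform convergence of these distribution functions to the continuous bounded-variation $V$ gives $\int_0^t Y_\tau\,d\mu_\varepsilon(\tau)\to\int_0^t Y_\tau\,dV_\tau$ for continuous $Y$, uniformly in $t$ on compacts. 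It then remains to reconcile this with the integral in the statement: substituting $s=\tau+\varepsilon$ shows that $\int_\varepsilon^{t+\varepsilon}Y_s g(\cdots)\,ds=\int_0^t Y_{\tau+\varepsilon}\,d\mu_\varepsilon(\tau)$, which differs from $\int_0^t Y_\tau\,d\mu_\varepsilon(\tau)$ by at most $\sup_{|u-v|\leq\varepsilon}|Y_u-Y_v|\cdot X^g_\varepsilon(t)\to 0$ by uniform continuity of $Y$ on compacts.

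The main obstacle is the gap-filling step, and in particular recognizing that it forces the non-geometric spacing $\varepsilon_n=n^{-p}$ with $\varepsilon_{n+1}/\varepsilon_n\to1$: with a geometric sequence the pathwise increment $|X^g_\varepsilon(t)-X^g_{\varepsilon_n}(t)|$ remains of order one and the argument cannot close, whereas the ratio tending to $1$ is precisely what sends $\delta_n$ to zero. A secondary technical point is the careful bookkeeping of the $\varepsilon$-shift between the integration window $[\varepsilon,t+\varepsilon]$ and $[0,t]$, which must be absorbed using the continuity of $X$, $V$ and $Y$ rather than ignored.
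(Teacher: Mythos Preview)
The paper does not give a proof of this lemma; it is simply quoted from Gradinaru--Nourdin~\cite{Grad3}. Your argument is correct and is in fact the same strategy used in the original reference: Borel--Cantelli along a deterministic subsequence, then a pathwise gap-filling estimate exploiting the H\"older hypothesis on $g$ and on $X$, and finally monotonicity plus a Dini/P\'olya argument for the uniform-in-$t$ weighted statement. Your emphasis on the choice $\varepsilon_n=n^{-p}$ with $p>1/\alpha$ is exactly the right point: a geometric sequence would make the $L^2$-series summable but would leave $\varepsilon_{n+1}/\varepsilon_n$ bounded away from $1$, so the bound $\delta_n$ would not vanish; the polynomial spacing is precisely what allows both steps to close, and this is the device used by Gradinaru and Nourdin.
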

Recall that
$$
X_\varepsilon(t)=\frac{1+a+b}{\varepsilon^{1+b}} \int_\varepsilon^{t+\varepsilon}
(B^{a,b}_{s+\varepsilon}-B^{a,b}_s)^2s^{b}ds
$$
for all $0<\varepsilon<1$. The proof of Proposition~\ref{prop4.1} points out that there exists $\beta\in (0,1)$ such that
\begin{align*}
\|X_\varepsilon(t)-\kappa_{a,b}t^{1+a+b}\|_{L^2}=O(\varepsilon^{\beta}), \qquad \varepsilon\to 0
\end{align*}
for all $t\geq 0$. Notice that $g(x)=x^2$ satisfies the condition~\eqref{eq4.2-Gradinaru--Nourdin}. We obtain the convergence
\begin{align*}
\lim_{\varepsilon\downarrow 0}\frac{1+a+b}{\varepsilon^{1+b}}
\int_\varepsilon^{t+\varepsilon} f(B^{a,b}_s)(B^{a,b}_{s+\varepsilon}-B^{a,b}_s)^2s^{b}ds
\longrightarrow \kappa_{a,b}\int_0^tf(B^{a,b}_s)ds^{1+a+b},
\end{align*}
almost surely, uniformly in $t$ on each compact interval by taking $Y_s=f(B^{a,b}_s)s^{b}$ for $s\geq 0$. Clearly, by the H\"older continuity of weighted-fBm $B^{a,b}$, we get
\begin{align*}
\frac{1+a+b}{\varepsilon^{1+b}}
\int_\varepsilon^{t+\varepsilon}o(B^{a,b}_{s+\varepsilon}-B^{a,b}_s)
(B^{a,b}_{s+\varepsilon}-B^{a,b}_s)s^{b}ds\longrightarrow 0
\end{align*}
in $L^1(\Omega)$, as $\varepsilon \to 0$. Thus, we have obtained the next result.

\begin{corollary}
Let $a>-1$, $|b|<1$, $|b|<1+a$ and let $f\in
C^1({\mathbb R})$. Then the generalized quadratic covariation $[f(B^{a,b}),B^{a,b}]^{(a,b)}$ exists and
\begin{align}\label{sec6-eq6.2}
[f(B^{a,b}),B^{a,b}]^{(a,b)}_t&=\kappa_{a,b} \int_0^tf'(B^{a,b}_s)ds^{1+a+b}.
\end{align}
\end{corollary}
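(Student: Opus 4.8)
The plan is to reduce the statement to the two limits established immediately above it by means of a first-order Taylor expansion of $f$. Since $f\in C^1(\mathbb{R})$, for every $s>0$ and $\varepsilon>0$ I would write
\[
f(B^{a,b}_{s+\varepsilon})-f(B^{a,b}_s)=f'(B^{a,b}_s)(B^{a,b}_{s+\varepsilon}-B^{a,b}_s)+\rho_\varepsilon(s),
\]
with
\[
\rho_\varepsilon(s)=(B^{a,b}_{s+\varepsilon}-B^{a,b}_s)\int_0^1\Bigl[f'\bigl(B^{a,b}_s+\theta(B^{a,b}_{s+\varepsilon}-B^{a,b}_s)\bigr)-f'(B^{a,b}_s)\Bigr]d\theta .
\]
Feeding this into $J_\varepsilon(f,t)$ splits the defining integral as
\[
J_\varepsilon(f,t)=\frac{1+a+b}{\varepsilon^{1+b}}\int_\varepsilon^{t+\varepsilon}f'(B^{a,b}_s)(B^{a,b}_{s+\varepsilon}-B^{a,b}_s)^2 s^b\,ds+\frac{1+a+b}{\varepsilon^{1+b}}\int_\varepsilon^{t+\varepsilon}\rho_\varepsilon(s)(B^{a,b}_{s+\varepsilon}-B^{a,b}_s)s^b\,ds .
\]

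First I would treat the leading term. The convergence displayed just before the statement---obtained from Lemma~\ref{Grad-Nourdin} with $g(x)=x^2$ and the $L^2$-rate supplied by the proof of Proposition~\ref{prop4.1}---uses only the continuity of $f$. As $f\in C^1$ forces $f'$ to be continuous, that convergence applies verbatim with $f'$ in place of $f$, yielding
\[
\frac{1+a+b}{\varepsilon^{1+b}}\int_\varepsilon^{t+\varepsilon}f'(B^{a,b}_s)(B^{a,b}_{s+\varepsilon}-B^{a,b}_s)^2 s^b\,ds\longrightarrow \kappa_{a,b}\int_0^t f'(B^{a,b}_s)\,ds^{1+a+b}
\]
almost surely and uniformly in $t$ on compact intervals, which is precisely the right-hand side of~\eqref{sec6-eq6.2}.

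It then remains to show that the remainder term vanishes uniformly in $t$. Fixing $\omega$ off a null set, the path $s\mapsto B^{a,b}_s$ is continuous, hence bounded on $[0,T+1]$ by some $M=M(\omega)$ and uniformly continuous, so $\delta_\varepsilon:=\sup_{0\le s\le T}|B^{a,b}_{s+\varepsilon}-B^{a,b}_s|\to0$ as $\varepsilon\downarrow0$. Writing $\omega_{f'}^M$ for the modulus of continuity of $f'$ on $[-M,M]$, the Taylor remainder obeys $|\rho_\varepsilon(s)|\le |B^{a,b}_{s+\varepsilon}-B^{a,b}_s|\,\omega_{f'}^M(\delta_\varepsilon)$, so that
\[
\sup_{0\le t\le T}\Bigl|\frac{1+a+b}{\varepsilon^{1+b}}\int_\varepsilon^{t+\varepsilon}\rho_\varepsilon(s)(B^{a,b}_{s+\varepsilon}-B^{a,b}_s)s^b\,ds\Bigr|\le \omega_{f'}^M(\delta_\varepsilon)\,X_\varepsilon(T).
\]
Since $X_\varepsilon$ is increasing in $t$ with $X_\varepsilon(T)\to\kappa_{a,b}T^{1+a+b}$ by Proposition~\ref{prop4.1}, the right-hand side tends to $0$, and adding the two pieces gives the ucp convergence $J_\varepsilon(f,t)\to\kappa_{a,b}\int_0^t f'(B^{a,b}_s)\,ds^{1+a+b}$. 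I expect the only genuinely delicate point to be this uniform-in-$t$ control of the remainder: it is what compels one to dominate the remainder by the monotone, convergent process $X_\varepsilon$ rather than to estimate it separately for each fixed $t$, the heavy analytic work (the covariance estimates of Lemmas~\ref{lem3.1} and~\ref{lem3.3} and the $L^2$ computation of Proposition~\ref{prop4.1}) having already been carried out in establishing the leading term.
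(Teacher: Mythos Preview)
Your argument is correct and follows the same route as the paper: a first-order Taylor expansion of $f$, the Gradinaru--Nourdin lemma (with $g(x)=x^2$ and $Y_s=f'(B^{a,b}_s)s^b$) for the leading term, and a separate remainder estimate. The only cosmetic difference is in the remainder: the paper dispatches it in one line as an $L^1$-limit ``by the H\"older continuity of $B^{a,b}$'', whereas you give a cleaner pathwise bound $\omega_{f'}^M(\delta_\varepsilon)\,X_\varepsilon(T)$, exploiting the monotonicity of $X_\varepsilon$ in $t$ and its almost-sure convergence (which indeed follows from Lemma~\ref{Grad-Nourdin} applied to the $L^2$-rate in Proposition~\ref{prop4.1}, not from Proposition~\ref{prop4.1} alone).
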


\section{The generalized quadratic covariation, existence}~\label{sec5}

In this section, for $-1<b<0$ we shall study the existence of the
{\it generalized quadratic covariation} more general class of
functions than the one of class $C^1$. Recall that
$$
J_\varepsilon(f,t):=\frac{1+a+b}{\varepsilon^{1+b}} \int_\varepsilon^{t+\varepsilon}\left\{ f(B^{a,b}_{s+\varepsilon})
-f(B^{a,b}_s)\right\}(B^{a,b}_{s+\varepsilon}-B^{a,b}_s)s^{b}ds
$$
for all $\varepsilon>0$ and all Borel functions $f$. Consider the decomposition
\begin{equation}\label{sec4-eq4.000000}
\begin{split}
\frac{1}{\varepsilon^{1+b}}\int_\varepsilon^{t+\varepsilon} &\left\{f(B^{a,b}_{
s+\varepsilon})-f(B^{a,b}_s)\right\}(B^{a,b}_{s+\varepsilon} -B^{a,b}_s)s^{b}ds\\
&=\frac{1}{\varepsilon^{1+b}}\int_\varepsilon^{t+\varepsilon} f(B^{a,b}_{
s+\varepsilon})(B^{a,b}_{s+\varepsilon}-B^{a,b}_s)s^bds\\
&\qquad\qquad\qquad-\frac{1}{\varepsilon^{1+b}}
\int_\varepsilon^{t+\varepsilon} f(B^{a,b}_s)(B^{a,b}_{s+\varepsilon}-B^{a,b}_s)s^bds\\
&\equiv I_\varepsilon^{+}(f,t)-I_\varepsilon^{-}(f,t),
\end{split}
\end{equation}
and define the set ${\mathscr H}$ of measurable functions $f$ on
${\mathbb R}$ such that $\|f\|_{{\mathscr H}}<\infty$, where
\begin{align*}
\|f\|_{{\mathscr H}}:=&\sqrt{\int_0^{T+1}\int_{\mathbb
R}|f(x)|^2e^{-\frac{x^2}{2s^{1+a+b}
}}\frac{dxds}{\sqrt{2\pi}s^{(1-a-b)/2}}}.
\end{align*}
For the set ${\mathscr H}$ we have
\begin{itemize}
\item [(1)] ${\mathscr H}$ is a Banach space.
\item [(2)] ${\mathscr H}\supset C^\infty_0({\mathbb R})$, the set of infinitely differentiable functions with compact support.
\item [(3)] the set ${\mathscr E}$ of elementary functions of the form
$$
f_\triangle(x)=\sum_{i}f_{i}1_{(x_{i-1},x_{i}]}(x)
$$
is dense in ${\mathscr H}$, where $\{x_i,0\leq i\leq l\}$ is an
finite sequence of real numbers such that $x_i<x_{i+1}$.
\item [(4)] the space ${\mathscr H}$ contains all Borel functions $f$ satisfying
    $$
    |f(x)|\leq Ce^{\beta x^2},\qquad x\in {\mathbb R}
    $$
    with $\beta<\frac14T^{-(1+a+b)}$.
\end{itemize}

For simplicity we let $T=1$ in the following discussions. The main result of this section is to explain and prove the
following theorem.
\begin{theorem}\label{th6.2}
Let $a>-1$, $-1<b<0$, $|b|<1+a$ and $f\in {\mathscr H}$. Then the generalized quadratic covariation $[f(B^{a,b}),B^{a,b}]^{(a,b)}$ exists, and we have
\begin{align}\label{th4.1-eq}
E\left|[f(B^{a,b}),B^{a,b}]^{(a,b)}_t\right|^2\leq C_{a,b,T}\|f\|_{{\mathscr H}}^2
\end{align}
for all $a\geq 0,t\in [0,1]$.
\end{theorem}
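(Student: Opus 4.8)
The plan is to work from the decomposition~\eqref{sec4-eq4.000000}, in which $J_\varepsilon(f,t)=(1+a+b)\bigl(I_\varepsilon^{+}(f,t)-I_\varepsilon^{-}(f,t)\bigr)$, and to reduce everything to the single uniform estimate
\[
E\bigl|I_\varepsilon^{\pm}(f,t)\bigr|^2\le C_{a,b,T}\,\|f\|_{\mathscr H}^2,\qquad 0<\varepsilon<1 .
\]
Granting this for $f$ in a dense subclass, existence and the bound~\eqref{th4.1-eq} follow by approximation: convergence of $J_\varepsilon(g,t)$ is already available for $g\in C^1$ from the corollary of Section~\ref{sec6} and can be checked for the elementary functions of property~(3); then, for $f\in{\mathscr H}$, one picks $g_n$ with $\|f-g_n\|_{\mathscr H}\to0$ and writes
\[
E\bigl|J_\varepsilon(f,t)-[g_n(B^{a,b}),B^{a,b}]^{(a,b)}_t\bigr|^2\le 2E\bigl|J_\varepsilon(f-g_n,t)\bigr|^2+2E\bigl|J_\varepsilon(g_n,t)-[g_n(B^{a,b}),B^{a,b}]^{(a,b)}_t\bigr|^2 .
\]
The uniform estimate controls the first term and the convergence controls the second, so $\{J_\varepsilon(f,t)\}$ is Cauchy in $L^2(\Omega)$; passing to the limit in the uniform estimate gives~\eqref{th4.1-eq}. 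Note that $\|f\|_{\mathscr H}^2=\int_0^{T+1}s^{a+b}E[f^2(B^{a,b}_s)]\,ds$, since the weight $\tfrac{1}{\sqrt{2\pi}s^{(1-a-b)/2}}e^{-x^2/(2s^{1+a+b})}$ equals $s^{a+b}$ times the density of $B^{a,b}_s$.

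The core is the uniform $L^2$ estimate, which I would carry out for $I_\varepsilon^{-}$ (the argument for $I_\varepsilon^{+}$ being analogous, replacing $f(B^{a,b}_s)$ by $f(B^{a,b}_{s+\varepsilon})$ and using that $E[f^2(B^{a,b}_{s+\varepsilon})]$ is comparable to $E[f^2(B^{a,b}_s)]$ after the shift $s\mapsto s+\varepsilon$, covered by the integration up to $T+1$). With $\Delta_s:=B^{a,b}_{s+\varepsilon}-B^{a,b}_s$,
\[
E\bigl|I_\varepsilon^{-}(f,t)\bigr|^2=\frac1{\varepsilon^{2+2b}}\int_\varepsilon^{t+\varepsilon}\int_\varepsilon^{t+\varepsilon}(sr)^{b}\,E\!\left[f(B^{a,b}_s)f(B^{a,b}_r)\Delta_s\Delta_r\right]dr\,ds .
\]
Since $(B^{a,b}_s,B^{a,b}_r,\Delta_s,\Delta_r)$ is jointly Gaussian, I would apply Gaussian integration by parts twice, transferring each increment onto the $f$'s; this yields a finite sum of terms of types $E[f'(B^{a,b}_s)f'(B^{a,b}_r)]$, $E[f''(B^{a,b}_s)f(B^{a,b}_r)]$, $E[f(B^{a,b}_s)f''(B^{a,b}_r)]$ and $E[f(B^{a,b}_s)f(B^{a,b}_r)]$, each multiplied by products of the covariances $E[B^{a,b}_\bullet\Delta_\bullet]$ and $E[\Delta_s\Delta_r]$. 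Lemma~\ref{lem3.5} returns all derivative terms to $E[f^2(B^{a,b}_s)]+E[f^2(B^{a,b}_r)]$ at the cost of the kernels $\tfrac{(rs)^{(1+a+b)/2}+\mu_{s,r}}{\rho^2_{s,r}}$ and $\tfrac{r^{1+a+b}}{\rho^2_{s,r}}$, while the covariance prefactors are bounded by Lemma~\ref{lem3.4} (each increment--value covariance being $\le C_{a,b}\varepsilon^{1+b}(\cdot)^a$ when $a\ge0$) and Lemma~\ref{lem3.3}. The two increment--value factors contribute exactly $\varepsilon^{2+2b}$, cancelling the prefactor $\varepsilon^{-(2+2b)}$ and leaving an $\varepsilon$-free kernel.

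It then remains to integrate this kernel against $E[f^2(B^{a,b}_s)]+E[f^2(B^{a,b}_r)]$ over $[\varepsilon,t+\varepsilon]^2$ and to dominate the result by $\int_0^{T+1}s^{a+b}E[f^2(B^{a,b}_s)]\,ds$. Inserting $\rho^2_{s,r}\asymp (rs)^a(r\wedge s)^{1+b}|r-s|^{1+b}$ from Lemma~\ref{lem3.1}, I would split the square into the strip $|s-r|\le\varepsilon$, treated via the tunable exponents $\alpha,\beta,\nu$ of Lemma~\ref{lem3.3} as in~\eqref{sec6-eq6.11}, and its complement, where the singular factor is $|s-r|^{-(1+b)}$. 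This last factor is integrable \emph{precisely because} $-1<b<0$; this is the structural reason the present method works for $b<0$ and must be abandoned for $b>0$. Carrying out the inner integration in the smaller time variable, using the Beta-type identities~\eqref{lem3.1-eq3} and~\eqref{lem3.4-eq7} to collect powers, the diagonal contribution reproduces exactly the weight $s^{a+b}$, giving the bound $C_{a,b,T}\int_0^{T+1}s^{a+b}E[f^2(B^{a,b}_s)]\,ds=C_{a,b,T}\|f\|_{\mathscr H}^2$.

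The hard part will be this last integration. A crude, fully symmetric use of Lemma~\ref{lem3.5} overestimates the off-diagonal and small-time contributions: it produces the strictly stronger weight $s^{(a+b-1)/2}$, which is \emph{not} dominated by $s^{a+b}$ near the origin, so an abstract $L^2$-boundedness of the resulting degree$-(-1)$ kernel fails. The integration must therefore respect the heat-kernel structure built into the $\mathscr H$-norm, pairing each contribution with the Gaussian density of the \emph{larger} time variable and exploiting that the extremal $f$ forces matching rates in $E[f^2(B^{a,b}_s)]$ and in the weight $\int_0^{T+1}s^{a+b}p_s(x)\,ds$; this is where the real work lies. The restriction $a\ge0$ is used only to keep the covariance estimates of Lemma~\ref{lem3.4} in their sharp form, whereas the existence statement holds for all $a>-1$ once the approximation scheme of the first paragraph is in place.
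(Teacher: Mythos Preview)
Your treatment of the uniform $L^2$ estimate $E|I_\varepsilon^{\pm}(f,t)|^2\le C\|f\|_{\mathscr H}^2$ is essentially the paper's Lemma~\ref{lem6.2}: the duality/Gaussian integration by parts producing the five $\Psi_i$ terms, the use of Lemmas~\ref{lem3.3}--\ref{lem3.5} to bound the covariance prefactors and return the derivative terms to $E[f^2(B^{a,b}_\cdot)]$, and the near/far-diagonal splitting with $|s-r|^{-(1+b)}$ integrable exactly when $b<0$. That part matches.

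Where you depart from the paper is in the passage from the uniform bound to existence. The paper does \emph{not} invoke the $C^1$ result of Section~\ref{sec6} and then approximate; instead it proves directly that $\{I_\varepsilon^{\pm}(f,\cdot)\}_\varepsilon$ is Cauchy in $L^2$ for smooth compactly supported $f$, by expanding $E|I_{\varepsilon_1}^{-}-I_{\varepsilon_2}^{-}|^2$ into the cross-kernels $\Phi_{s,r}(1,\varepsilon_i)$, $\Phi_{s,r}(2,\varepsilon_1,\varepsilon_2)$ and the five difference terms $A_1,A_{11},A_{12},A_3,A_4$, and then showing each of the resulting double integrals tends to zero (Parts~I--III) via refined versions of the covariance estimates together with dominated convergence. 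This is the long technical core of the proof.

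Your shortcut has a genuine gap. The displayed approximation inequality needs
\[
E\bigl|J_\varepsilon(g_n,t)-[g_n(B^{a,b}),B^{a,b}]^{(a,b)}_t\bigr|^2\longrightarrow 0\quad(\varepsilon\downarrow 0),
\]
i.e.\ $L^2$ convergence for $g_n\in C^1$. The Corollary of Section~\ref{sec6} gives only almost sure (ucp) convergence, obtained from Lemma~\ref{Grad-Nourdin}. A uniform $L^2$ bound yields uniform integrability of $|J_\varepsilon(g_n,t)|^p$ only for $p<2$, so almost sure convergence cannot be upgraded to $L^2$ without an extra ingredient (e.g.\ a uniform $L^{2+\delta}$ bound, or a direct Cauchy computation for the subclass). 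Either route is real additional work; the paper chooses the second and carries it out in full. In effect you have correctly identified Lemma~\ref{lem6.2} as one hard part, but you have traded the paper's second hard part (the Cauchy computation) for an assumption that Section~\ref{sec6} does not supply.
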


For simplicity we let $T=1$ in the rest of this section.
\begin{lemma}\label{lem6.2}
Let $a>-1,\;-1<b<0,\;|b|<1+a$ and let $f\in {\mathscr H}$. We then have
\begin{align}
&E\left|I_\varepsilon^{-}(f,t)\right|^2\leq C_{a,b}\|f\|_{{\mathscr H}}^2,\\
&E\left|I_\varepsilon^{+}(f,t)\right|^2\leq C_{a,b}\|f\|_{{\mathscr H}}^2
\end{align}
for all $0<\varepsilon<1$ and $t\in [0,1]$.
\end{lemma}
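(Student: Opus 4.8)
The plan is to treat the two estimates in parallel, since they differ only in whether $f$ is evaluated at $B^{a,b}_s$ or at $B^{a,b}_{s+\varepsilon}$, and to reduce everything to the single second-moment functional that defines the $\mathscr H$-norm. The starting observation is that, because $B^{a,b}_s$ is centered Gaussian with variance $s^{1+a+b}$, the weight in $\|\cdot\|_{\mathscr H}$ is exactly $s^{a+b}$ times the density of $B^{a,b}_s$, so that
$$
\|f\|_{\mathscr H}^2=\int_0^{T+1}s^{a+b}E\bigl[f(B^{a,b}_s)^2\bigr]\,ds.
$$
This is the quantity we must reproduce on the right-hand side. Writing $\Delta_u:=B^{a,b}_{u+\varepsilon}-B^{a,b}_u$ and expanding the square gives
$$
E\bigl|I_\varepsilon^{-}(f,t)\bigr|^2=\frac1{\varepsilon^{2+2b}}\int_\varepsilon^{t+\varepsilon}\!\!\int_\varepsilon^{t+\varepsilon}\Lambda_\varepsilon(s,r)(sr)^b\,ds\,dr,\qquad \Lambda_\varepsilon(s,r):=E\bigl[f(B^{a,b}_s)f(B^{a,b}_r)\Delta_s\Delta_r\bigr],
$$
and the whole problem is to bound this double integral by $\|f\|_{\mathscr H}^2$.

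First I would prove the bound for $f\in C^1(\mathbb R)$ with compact support and then extend to all $f\in{\mathscr H}$ by density, using that $C_0^\infty(\mathbb R)$ is contained in and dense in $\mathscr H$ (properties (2) and (3)). For smooth $f$ the key step is to remove the two Gaussian increments by Gaussian integration by parts applied to the jointly Gaussian vector $(B^{a,b}_s,B^{a,b}_r,\Delta_s,\Delta_r)$. Integrating out $\Delta_r$ and then $\Delta_s$ expresses $\Lambda_\varepsilon(s,r)$ as a sum of terms of the form
$$
\mathrm{Cov}(\Delta_r,\cdot)\,\mathrm{Cov}(\Delta_s,\cdot)\,E\bigl[f^{(i)}(B^{a,b}_s)f^{(j)}(B^{a,b}_r)\bigr]+\mathrm{Cov}(\Delta_s,\Delta_r)\,E\bigl[f(B^{a,b}_s)f(B^{a,b}_r)\bigr],
$$
with $i+j\le 2$, where each covariance is of the type $E[B^{a,b}_\cdot\Delta_\cdot]$ or $E[\Delta_s\Delta_r]$. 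Every one of these covariances is controlled by the estimates already established: $E[B^{a,b}_\cdot\Delta_\cdot]\le C_{a,b}\,\varepsilon^{1+b}(\cdot)^a$ by Lemma~\ref{lem3.4}, and $|\mathrm{Cov}(\Delta_s,\Delta_r)|=|\mu_{s,r}|$ by Lemma~\ref{lem3.3}, while the expectations $E[f'f']$, $E[f''f]$, $E[ff]$ are bounded through Lemma~\ref{lem3.5} (and Cauchy--Schwarz for the last) by coefficients involving $\rho_{s,r}^{-2}$, $\mu_{s,r}$ and $s\wedge r$, times $E[f^2(B^{a,b}_s)]+E[f^2(B^{a,b}_r)]$.

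Substituting these bounds, the two increment-covariance factors produce exactly $\varepsilon^{2+2b}$, which cancels the prefactor $\varepsilon^{-(2+2b)}$, so the $\varepsilon$-dependence disappears and we are left with a genuine double integral in $(s,r)$. The \emph{main obstacle} is then to check that this integral converges and is bounded by $\|f\|_{\mathscr H}^2$: the dangerous factor is $\rho_{s,r}^{-2}$, which by Lemma~\ref{lem3.1} satisfies $\rho_{s,r}^2\asymp (sr)^a(s\wedge r)^{1+b}|s-r|^{1+b}$ and hence blows up on the diagonal $s=r$. The point is that after inserting all the polynomial prefactors the worst term behaves, for $r<s$, like $|s-r|^{-(1+b)}$, and since $-1<b<0$ the exponent $-(1+b)$ lies in $(-1,0)$, so the diagonal singularity is integrable; this is precisely where the hypothesis $b<0$ is used.

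Performing the inner integration via a Beta integral of the form $\int_0^s r^{a+b}(s-r)^{-(1+b)}\,dr\asymp s^a$ (finite because $a+b+1>0$ and $-b\in(0,1)$), the double integral collapses to $\int_0^{t+\varepsilon}s^{a+b}\bigl(E[f^2(B^{a,b}_s)]+\cdots\bigr)\,ds\le C_{a,b}\|f\|_{\mathscr H}^2$, giving the estimate for $I_\varepsilon^{-}$. For $I_\varepsilon^{+}$ the same computation applies with $f$ evaluated at the shifted times $s+\varepsilon,\,r+\varepsilon$; the only change is that the second moments appear as $E[f^2(B^{a,b}_{s+\varepsilon})]$, which integrate to the same $\mathscr H$-norm because the integration range $[0,T+1]$ is chosen wide enough to absorb the shift by $\varepsilon$.
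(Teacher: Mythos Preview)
Your proposal is correct and follows essentially the same route as the paper: reduce to smooth compactly supported $f$ by density, expand the four-point Gaussian correlation by integration by parts (the paper phrases this as the Malliavin duality relation, producing the five terms $\Psi_1,\ldots,\Psi_5$ in~\eqref{sec4-eq4.800}), then bound each piece with Lemmas~\ref{lem3.1}, \ref{lem3.3}, \ref{lem3.4}, \ref{lem3.5} and use the integrability of the diagonal singularity $(s-r)^{-(1+b)}$ for $b<0$. The only place the paper is more explicit than your sketch is the cross-increment term $\Psi_5=E[\Delta_s\Delta_r]\,E[f(B^{a,b}_s)f(B^{a,b}_r)]$, which it splits into a near-diagonal region $|s-r|<\varepsilon$ (Cauchy--Schwarz plus Lemma~\ref{lem3.0}) and a far-diagonal region (Lemma~\ref{lem3.3}, which as stated requires disjoint intervals), so be prepared to make that split when you write out the details.
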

\begin{proof}
We prove only the first estimate, and similarly one can prove the second estimate. By the denseness of ${\mathscr E}$ in ${\mathscr H}$ we only need to show that the lemma holds for all $f\in {\mathscr E}$. Consider the function $\zeta$ on ${\mathbb R}$ by
\begin{equation}
\zeta(x):=
\begin{cases}
ce^{\frac1{(x-1)^2-1}}, &{\text { $x\in (0,2)$}},\\
0, &{\text { otherwise}},
\end{cases}
\end{equation}
where $c$ is a normalizing constant such that $\int_{\mathbb
R}\zeta(x)dx=1$, and define the mollifiers
\begin{equation}\label{sec7-eq7.4}
\zeta_n(x):=n\zeta(nx),\qquad n=1,2,\ldots.
\end{equation}
For $f_\triangle\in {\mathscr E}$ consider the sequence of smooth functions
$$
f_{\triangle,n}(x):=\int_{\mathbb R}f_{\triangle}(x-{y})\zeta_n(y)dy=\int_0^2 f_{\triangle}(x-\frac{y}n)\zeta(y)dy,\quad x\in {\mathbb R}.
$$
Then $f_{\triangle,n}\in C_0^\infty({\mathbb R})$ for all $n\geq 1$ and $f_{\triangle,n}\to f_{\triangle}$ in ${\mathscr H}$, as $n$ tends to infinity. Thus, by approximating we may assume that $f$ is an infinitely differentiable function with compact support. It follows from the duality relationship that
\begin{equation}\label{sec4-eq4.800}
\begin{split}
E[f(&B^{a,b}_{s})f(B^{a,b}_{r})(B^{a,b}_{s+\varepsilon} -B^{a,b}_s)(B^{a,b}_{r+\varepsilon}-B^{a,b}_r)]\\
&=E\left[f(B^{a,b}_{s})f(B^{a,b}_{
r})(B^{a,b}_{s+\varepsilon}-B^{a,b}_s) \int_r^{r+\varepsilon}dB^{a,b}_l\right]\\
&=E\left\langle D^{a,b}\left(
f(B^{a,b}_{s})f(B^{a,b}_{r})(B^{a,b}_{s+\varepsilon}-B^{a,b}_s)\right), 1_{[r,r+\varepsilon]}\right\rangle_{{\mathcal H}_{a,b}}\\
&=E\left[B^{a,b}_{s}(B^{a,b}_{r+\varepsilon}-B^{a,b}_r)\right]E\left[
f'(B^{a,b}_{s})f(B^{a,b}_{r})(B^{a,b}_{s+\varepsilon}-B^{a,b}_s) \right]\\
&\qquad +E\left[B^{a,b}_{r}(B^{a,b}_{r+\varepsilon}-B^{a,b}_r)\right]E\left[
f(B^{a,b}_{s})f'(B^{a,b}_{r})(B^{a,b}_{s+\varepsilon}-B^{a,b}_s) \right]\\
&\qquad
+E\left[(B^{a,b}_{r+\varepsilon}-B^{a,b}_r)(B^{a,b}_{s+\varepsilon} -B^{a,b}_s) \right]E\left[
f(B^{a,b}_{s})f(B^{a,b}_{r})\right]\\
&=E\left[B^{a,b}_{s}(B^{a,b}_{r+\varepsilon}-B^{a,b}_r)\right]
E\left[B^{a,b}_{r}(B^{a,b}_{s+\varepsilon}-B^{a,b}_s)\right]
E\left[f'(B^{a,b}_{s})f'(B^{a,b}_{r})\right]\\
&\qquad+E\left[B^{a,b}_{s}(B^{a,b}_{r+\varepsilon}-B^{a,b}_r)\right]
E\left[B^{a,b}_{s}(B^{a,b}_{s+\varepsilon}-B^{a,b}_s)\right]
E\left[f''(B^{a,b}_{s})f(B^{a,b}_{r})\right]\\
&\qquad+E\left[B^{a,b}_{r}(B^{a,b}_{r+\varepsilon}-B^{a,b}_r)\right]
E\left[B^{a,b}_{s}(B^{a,b}_{s+\varepsilon}-B^{a,b}_s)\right]
E\left[f'(B^{a,b}_{s})f'(B^{a,b}_{r})\right]\\
&\qquad+E\left[B^{a,b}_{r}(B^{a,b}_{r+\varepsilon}-B^{a,b}_r)\right]
E\left[B^{a,b}_{r}(B^{a,b}_{s+\varepsilon}-B^{a,b}_s)\right]
E\left[f(B^{a,b}_{s})f''(B^{a,b}_{r})\right]\\
&\qquad
+E\left[(B^{a,b}_{r+\varepsilon}-B^{a,b}_r)(B^{a,b}_{s+\varepsilon} -B^{a,b}_s)\right]E\left[f(B^{a,b}_{s})f(B^{a,b}_{r})\right]\\
&\equiv \sum_{i=1}^5 \Psi_i(s,r,\varepsilon).
\end{split}
\end{equation}
In order to end the proof we claim now that
\begin{equation}\label{eq4.4}
\frac{1}{\varepsilon^{2+2b}}\left| \int_\varepsilon^{t+\varepsilon}\int_\varepsilon^{t+\varepsilon} \Psi_i(s,r,\varepsilon)(sr)^{b}dsdr \right|\leq C_{a,b}\|f\|^2_{{\mathscr H}},\qquad i=1,2,\ldots,5
\end{equation}
for all $\varepsilon>0$ small enough.

{\bf For $i=5$} we have
\begin{align*}
|E[(B^{a,b}_{r+\varepsilon}-B^{a,b}_r)(B^{a,b}_{s+\varepsilon} -B^{a,b}_s)]|&\leq \sqrt{E[(B^{a,b}_{r+\varepsilon}-B^{a,b}_r)^2E(B^{a,b}_{s+\varepsilon} -B^{a,b}_s)^2]}\\
&\leq C_{a,b}[(r+\varepsilon)(s+\varepsilon)]^{a/2} \varepsilon^{1+b}\\
&\leq C_{a,b}[(r+\varepsilon)(s+\varepsilon)]^{a/2}\frac{ \varepsilon^{2+2b}}{(s-r)^{1+b}}
\end{align*}
for $0<s-r<\varepsilon\leq 1$, which gives
\begin{align*}
I_\varepsilon:&= \frac{1}{\varepsilon^{2+2b}} \int_\varepsilon^{t+\varepsilon}\int_{s-\varepsilon}^s
|E[(B^{a,b}_{r+\varepsilon}-B^{a,b}_r)(B^{a,b}_{s+\varepsilon} -B^{a,b}_s)]|E[f^2(B^{a,b}_{s})](sr)^{b}
drds\\
&\leq C_{a,b}\int_\varepsilon^{t+\varepsilon}\int_{s-\varepsilon}^s \frac{(r^a\vee s^a)(sr)^{b}}{(s-r)^{1+b}} E[f^2(B^{a,b}_{s})]drds\\
&\leq C_{a,b}\int_\varepsilon^{t+\varepsilon}
s^bEf^2(B^{a,b}_{s})ds
\int_0^s \frac{r^{b}(r^a\vee s^a)}{(s-r)^{1+b}} dr\\
&=C_{a,b}\int_\varepsilon^{t+\varepsilon}s^{b+a}Ef^2(B^{a,b}_{s})ds \leq C_{a,b}\int_0^{2}s^{a+b}Ef^2(B^{a,b}_{s})ds
\end{align*}
for $0<s-r<\varepsilon\leq 1$. Moreover, by~\eqref{sec3-eq3.8==} with $\beta=1$ and $\nu=-b$ we have
\begin{align*}
II_\varepsilon:&=\frac{1}{\varepsilon^{2+2b}} \int_{\varepsilon}^{t+\varepsilon} \int_\varepsilon^{s-\varepsilon}
|E[(B^{a,b}_{r+\varepsilon}-B^{a,b}_r)(B^{a,b}_{s+\varepsilon} -B^{a,b}_s)]|E[f^2(B^{a,b}_{s})] (sr)^{b}drds  \\
&\leq C_{a,b}\int_{\varepsilon}^{t+\varepsilon}
s^bE[f^2(B^{a,b}_{s})]ds\int_\varepsilon^{s-\varepsilon}(r^a\vee s^a)\frac{r^bdr}{(s-r)^{1+b}} \\
&\leq C_{a,b} \int_{\varepsilon}^{t+\varepsilon}s^b Ef^2(B^{a,b}_{s})ds \int_0^s
\frac{r^{b}(r^a\vee s^a)}{(s-r)^{1+b}}dr\\
&\leq C_{a,b}\int_{\varepsilon}^{t+\varepsilon}s^{a+b} Ef^2(B^{a,b}_{s})ds\leq C_{a,b}\int_{0}^{2}s^{a+b} Ef^2(B^{a,b}_{s})ds
\end{align*}
for all $0<\varepsilon\leq 1$. It follows that
\begin{align*}
& \frac{1}{\varepsilon^{2+2b}} \int_\varepsilon^{t+\varepsilon}\int_\varepsilon^{t+\varepsilon} |\Psi_5(s,r,\varepsilon)|(sr)^{b} drds \\
&\leq \frac{1 }{2\varepsilon^{2+2b}} \int_\varepsilon^{t+\varepsilon}\int_\varepsilon^{t+\varepsilon}
|E[(B^{a,b}_{r+\varepsilon}-B^{a,b}_r)(B^{a,b}_{s+\varepsilon} -B^{a,b}_s)]|[Ef^2(B^{a,b}_{s})+Ef^2(B^{a,b}_{r})] (sr)^{b}drds\\
&\leq \frac{1}{\varepsilon^{2+2b}}\int_\varepsilon^{t+\varepsilon} \int_\varepsilon^{t+\varepsilon}
|E[(B^{a,b}_{r+\varepsilon}-B^{a,b}_r)(B^{a,b}_{s+\varepsilon} -B^{a,b}_s)]|Ef^2(B^{a,b}_{s})(sr)^{b}drds\\
&\leq \frac{2}{\varepsilon^{2+2b}}\int_\varepsilon^{t+\varepsilon} \int_\varepsilon^{s}
|E[(B^{a,b}_{r+\varepsilon}-B^{a,b}_r)(B^{a,b}_{s+\varepsilon} -B^{a,b}_s)]|Ef^2(B^{a,b}_{s})(sr)^{b}drds\\
&\leq 2(I_\varepsilon+II_\varepsilon)\leq C_{a,b}
\int_{0}^{2}s^{a+b} E[f^2(B^{a,b}_{s})]ds
\end{align*}
for all $0<\varepsilon\leq 1$.

{\bf For $i=1$}, by Lemma~\ref{lem3.4} and Lemma~\ref{lem3.5} we have
\begin{align*}
\frac{1}{\varepsilon^{2+2b}}& \int_\varepsilon^{t+\varepsilon}\int_\varepsilon^{t+\varepsilon} |\Psi_1(s,r,\varepsilon)|(sr)^{b}drds\\
&\leq  \frac{C_{a,b}}{2\varepsilon^{2+2b}} \int_\varepsilon^{t+\varepsilon}\int_\varepsilon^{t+\varepsilon}
|E[B^{a,b}_{s}(B^{a,b}_{r+\varepsilon}-B^{a,b}_r)]
E[B^{a,b}_{r}(B^{a,b}_{s+\varepsilon}-B^{a,b}_s)]|\\
&\qquad\qquad\cdot
\left(\frac{(rs)^{\frac12(1+a+b)}}{\rho^2_{s,r}} +\frac{\mu_{s,r}}{\rho^2_{s,r}}\right)\left(Ef^2(B^{a,b}_s)
+Ef^2(B^{a,b}_r)\right)(sr)^{b}dsdr\\
&= \frac{C_{a,b}}{\varepsilon^{2+2b}} \int_\varepsilon^{t+\varepsilon}\int_\varepsilon^{t+\varepsilon}
|E[B^{a,b}_{s}(B^{a,b}_{r+\varepsilon}-B^{a,b}_r)]
E[B^{a,b}_{r}(B^{a,b}_{s+\varepsilon}-B^{a,b}_s)]|\\
&\qquad\qquad\cdot
\left(\frac{(rs)^{\frac12(1+a+b)}}{\rho^2_{s,r}} +\frac{\mu_{s,r}}{\rho^2_{s,r}}\right) Ef^2(B^{a,b}_s)(sr)^{b}dsdr\\
&\leq  C_{a,b}\int_\varepsilon^{t+\varepsilon}s^{a+b}ds\int_\varepsilon^{s}
\left(\frac{(rs)^{\frac12(1+a+b)}}{\rho^2_{s,r}} +\frac{\mu_{s,r}}{\rho^2_{s,r}}\right)Ef^2(B^{a,b}_s)r^{a+b}dr\\
&\qquad +C_{a,b}\int_\varepsilon^{t+\varepsilon}s^{a+b}ds \int_s^{s+\varepsilon}
\left(\frac{(rs)^{\frac12(1+a+b)}}{\rho^2_{s,r}} +\frac{\mu_{s,r}}{\rho^2_{s,r}}\right)Ef^2(B^{a,b}_s)r^{a+b}dr\\
&\qquad +C_{a,b}\int_\varepsilon^{t+\varepsilon}s^{2a+b}ds \int_{s+\varepsilon}^{t+\varepsilon}
\left(\frac{(rs)^{\frac12(1+a+b)}}{\rho^2_{s,r}} +\frac{\mu_{s,r}}{\rho^2_{s,r}}\right)Ef^2(B^{a,b}_s)r^{b}dr\\
&\leq C_{a,b}
\int_{0}^{2}s^{a+b} E[f^2(B^{a,b}_{s})]ds
\end{align*}
for all $0<\varepsilon\leq 1$. Similarly, we can obtain the estimate~\eqref{eq4.4} for $i=2,3,4$, and
the lemma follows.
\end{proof}

\begin{lemma}\label{lem6.3}
Let $a>-1,\;-1<b<0,\;|b|<1+a$ and let $f$ be an infinitely differentiable function with compact support. We then have
\begin{align}
\Pi_1(t,\varepsilon_i,\varepsilon_j):= \frac1{\varepsilon_i^{2+2b}} E\Bigl|\int_{t+\varepsilon_2}^{t+\varepsilon_1}f(B^{a,b}_s) (B^{a,b}_{s+\varepsilon_i}-B^{a,b}_s)s^bds\Bigr|^2\longrightarrow 0
\end{align}
for all $0<\varepsilon_2<\varepsilon_1<1$ and $t\in [0,1]$, as $\varepsilon_1\to 0$, where $i,j=1,2$ and $i\neq j$.
\end{lemma}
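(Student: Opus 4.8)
The plan is to reduce this to the estimates already established in the proof of Lemma~\ref{lem6.2}. First I would expand the $L^2$-norm as a double integral,
$$
\Pi_1(t,\varepsilon_i,\varepsilon_j)=\frac1{\varepsilon_i^{2+2b}} \int_{t+\varepsilon_2}^{t+\varepsilon_1}\int_{t+\varepsilon_2}^{t+\varepsilon_1} E\bigl[f(B^{a,b}_s)f(B^{a,b}_r)(B^{a,b}_{s+\varepsilon_i}-B^{a,b}_s)(B^{a,b}_{r+\varepsilon_i}-B^{a,b}_r)\bigr](sr)^bdsdr,
$$
and then apply the duality expansion~\eqref{sec4-eq4.800} (with $\varepsilon$ replaced throughout by $\varepsilon_i$) to write the integrand as a sum $\sum_{k=1}^5\Psi_k(s,r,\varepsilon_i)$, where $\Psi_k$ are exactly the five terms computed there, only with the relabelled increment.

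The key observation is that each of the five bounds carried out in Lemma~\ref{lem6.2} is local in the domain of integration: after taking absolute values and performing the inner integral over $r$ (which we may freely enlarge to $[0,s]$, using the symmetry of the double integral and the nonnegativity of the integrands), every term $\Psi_k$ was controlled by a constant times the \emph{single} integral $\int s^{a+b}E[f^2(B^{a,b}_s)]\,ds$ in the outer variable. Crucially, in this procedure the prefactor $\varepsilon_i^{-(2+2b)}$ is exactly cancelled --- for the diagonal contributions $\{|s-r|<\varepsilon_i\}$ by $E[(B^{a,b}_{s+\varepsilon_i}-B^{a,b}_s)^2]\asymp s^a\varepsilon_i^{1+b}$, and for the off-diagonal contributions $\{|s-r|\geq\varepsilon_i\}$ by the bound~\eqref{sec3-eq3.8==} (taken with $\beta=1$, $\nu=-b$), which supplies precisely a factor $\varepsilon_i^{2+2b}$. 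Hence the cancellation is insensitive to the size of $\varepsilon_i$ relative to the width $\varepsilon_1-\varepsilon_2$ of the interval, and running the identical chain of inequalities but keeping the outer variable restricted to $[t+\varepsilon_2,t+\varepsilon_1]$ yields
$$
\Pi_1(t,\varepsilon_i,\varepsilon_j)\leq C_{a,b}\int_{t+\varepsilon_2}^{t+\varepsilon_1}s^{a+b}E\bigl[f^2(B^{a,b}_s)\bigr]\,ds.
$$

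Finally I would conclude by the absolute continuity of the Lebesgue integral. Since $f$ is smooth with compact support it is bounded, and $a+b>-1$ by the standing assumption $|b|<1+a$, so $s\mapsto s^{a+b}E[f^2(B^{a,b}_s)]$ is integrable on $[0,2]$. As $\varepsilon_1\downarrow 0$ we also have $\varepsilon_2\downarrow 0$ (because $0<\varepsilon_2<\varepsilon_1$), so the interval $[t+\varepsilon_2,t+\varepsilon_1]$ collapses to the single point $\{t\}$ and the right-hand side tends to zero, giving $\Pi_1\to 0$. I expect the main obstacle to be the bookkeeping in the second step: one must verify that the delicate estimate for $\Psi_5$ and the diagonal pieces of $\Psi_1,\dots,\Psi_4$ from Lemma~\ref{lem6.2} localize to the shrinking interval without leaving any residual factor that fails to vanish, which is exactly why the $\varepsilon_i$-cancellation noted above is the crux of the argument.
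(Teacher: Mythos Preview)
Your proposal is correct and is exactly the argument the paper intends: its entire proof of this lemma is the single sentence ``From the proof of Lemma~\ref{lem6.2} one can easily prove the result.'' You have simply unpacked that sentence --- expanding the square, invoking the duality decomposition~\eqref{sec4-eq4.800}, and observing that the five estimates of Lemma~\ref{lem6.2} localize to give a bound by $C_{a,b}\int_{t+\varepsilon_2}^{t+\varepsilon_1}s^{a+b}E[f^2(B^{a,b}_s)]\,ds$, which vanishes as the interval collapses.
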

\begin{proof}
From the proof of Lemma~\ref{lem6.2} one can easily prove the result.
\end{proof}

Now we can show our main result.
\begin{proof}[Proof of Theorem~\ref{th6.2}]
From Lemma~\ref{lem6.2}, it is enough to show that $I_{\varepsilon_1}^{-}(f,t)$ and $I_{\varepsilon_1}^{+}(f,t)$ are two Cauchy's sequences in $L^2(\Omega)$ for all $t\in [0,1]$. That is,
\begin{equation}\label{sec40-eq3-1}
E\left|I_{\varepsilon_1}^{-}(f,t)-I_{\varepsilon_2}^{-}(f,t)\right|^2
\longrightarrow 0,
\end{equation}
and
\begin{equation}\label{sec40-eq3-2}
E\left|I_{\varepsilon_1}^{+}(f,t)-I_{\varepsilon_2}^{+}(f,t)\right|^2
\longrightarrow 0
\end{equation}
for all $t\in [0,1]$, as $\varepsilon_1,\varepsilon_2\downarrow 0$. We prove only the convergence~\eqref{sec40-eq3-1}, and similarly one can prove~\eqref{sec40-eq3-2}. Without loss of generality one may assume that $\varepsilon_1>\varepsilon_2$. It follows that
\begin{align*}
&E\bigl|I_{\varepsilon_1}^{-}(f,t)-I_{\varepsilon_2}^{-}(f,t)\bigr|^2
\\
&\leq 3E\Bigl|\frac1{\varepsilon_1^{1+b}} \int_{t+\varepsilon_2}^{t+\varepsilon_1}f(B^{a,b}_s) (B^{a,b}_{s+\varepsilon_1}-B^{a,b}_s)s^bds\Bigr|^2\\
&\qquad+3E\Bigl|\frac1{\varepsilon_2^{1+b}} \int_{\varepsilon_2}^{\varepsilon_1}f(B^{a,b}_s) (B^{a,b}_{s+\varepsilon_2}-B^{a,b}_s)s^bds\Bigr|^2\\
&+3E\Bigl|\frac1{\varepsilon_1^{1+b}} \int_{\varepsilon_2}^{t+\varepsilon_1}f(B^{a,b}_s) (B^{a,b}_{s+\varepsilon_1}-B^{a,b}_s)s^bds- \frac1{\varepsilon_2^{1+b}} \int_{\varepsilon_2}^{t+\varepsilon_1}f(B^{a,b}_s) (B^{a,b}_{s+\varepsilon_2}-B^{a,b}_s)s^bds\Bigr|^2\\
&\equiv \Pi_1(t,\varepsilon_1,\varepsilon_2) +\Pi_1(0,\varepsilon_2,\varepsilon_1) +\Pi_2(t,\varepsilon_1,\varepsilon_2).
\end{align*}
In order to end the proof, it is enough to check $\Pi_2(t,\varepsilon_1,\varepsilon_2)\to 0$, as $\varepsilon_1,\varepsilon_2\to 0$. We have
\begin{align*}
\Pi_2&(t,\varepsilon_1,\varepsilon_2)\\
&=\frac1{\varepsilon_1^{2+2b}} \int_{\varepsilon_2}^{t+\varepsilon_1} \int_{\varepsilon_2}^{t+\varepsilon_1} Ef(B^{a,b}_s)f(B^{a,b}_r)(B^{a,b}_{s+\varepsilon_1} -B^{a,b}_s) (B^{a,b}_{r+\varepsilon_1}-B^{a,b}_r)(sr)^bdrds\\
&\quad-
\frac2{\varepsilon_1^{1+b}\varepsilon_2^{1+b}} \int_{\varepsilon_2}^{t+\varepsilon_1} \int_{\varepsilon_2}^{t+\varepsilon_1} Ef(B^{a,b}_s)f(B^{a,b}_r)(B^{a,b}_{s+\varepsilon_1} -B^{a,b}_s) (B^{a,b}_{r+\varepsilon_2}-B^{a,b}_r)(sr)^bdrds\\
&\quad+\frac1{\varepsilon_2^{2+2b}} \int_{\varepsilon_2}^{t+\varepsilon_1} \int_{\varepsilon_2}^{t+\varepsilon_1} Ef(B^{a,b}_s)f(B^{a,b}_r)(B^{a,b}_{s+\varepsilon_2}-B^{a,b}_s) (B^{a,b}_{r+\varepsilon_2}-B^{a,b}_r)(sr)^bdrds\\
&\equiv \frac1{\varepsilon_1^{2+2b}\varepsilon_2^{1+b}} \int_{\varepsilon_2}^{t+\varepsilon_1} \int_{\varepsilon_2}^{t+\varepsilon_1}\left\{\varepsilon_2^{1+b} \Phi_{s,r}(1,\varepsilon_1)-\varepsilon_1^{1+b}
\Phi_{s,r}(2,\varepsilon_1,\varepsilon_2)\right\}(sr)^bdrds\\
&\quad+
\frac1{\varepsilon_1^{1+b}\varepsilon_2^{2+2b}} \int_{\varepsilon_2}^{t+\varepsilon_1} \int_{\varepsilon_2}^{t+\varepsilon_1} \left\{\varepsilon_1^{1+b}\Phi_{s,r}(1,\varepsilon_2)
-\varepsilon_2^{1+b}
\Phi_{s,r}(2,\varepsilon_1,\varepsilon_2)\right\}(sr)^bdrds,
\end{align*}

where
$$
\Phi_{s,r}(1,\varepsilon) =E\left[f(B^{a,b}_s)f(B^{a,b}_r)(B^{a,b}_{s+\varepsilon}-B^{a,b}_s) (B^{a,b}_{r+\varepsilon}-B^{a,b}_r)\right],
$$
and
$$
\Phi_{s,r}(2,\varepsilon_1,\varepsilon_2) =E\left[f(B^{a,b}_s)f(B^{a,b}_r)(B^{a,b}_{s+\varepsilon_1}-B^{a,b}_s) (B^{a,b}_{r+\varepsilon_2}-B^{a,b}_r)\right].
$$
In order to estimate $\Phi_{s,r}(1,\varepsilon)$ and $\Phi_{s,r}(2,\varepsilon_1,\varepsilon_2)$, by approximating we may assume that $f$ is an infinitely differentiable function with compact support. We then have by~\eqref{sec4-eq4.800},
\begin{align*}
\Phi_{s,r}(1,\varepsilon)&=\sum_{i=1}^5\Psi_i(s,r,{\varepsilon})\\
&=E\left[B^{a,b}_{s}(B^{a,b}_{r+\varepsilon}-B^{a,b}_r)\right]
E\left[B^{a,b}_{s}(B^{a,b}_{s+\varepsilon}-B^{a,b}_s)\right]
E\left[f''(B^{a,b}_{s})f(B^{a,b}_{r})\right]\\
&+E\left[B^{a,b}_{s}(B^{a,b}_{r+\varepsilon}-B^{a,b}_r)\right]
E\left[B^{a,b}_{r}(B^{a,b}_{s+\varepsilon}-B^{a,b}_s)\right]
E\left[f'(B^{a,b}_{s})f'(B^{a,b}_{r})\right]\\
&\qquad+E\left[B^{a,b}_{r}(B^{a,b}_{r+\varepsilon}-B^{a,b}_r)\right] E\left[B^{a,b}_{s}(B^{a,b}_{s+\varepsilon}-B^{a,b}_s)\right]
E\left[f'(B^{a,b}_{s})f'(B^{a,b}_{r})\right]\\
&\qquad+E\left[B^{a,b}_{r}(B^{a,b}_{r+\varepsilon}-B^{a,b}_r)\right]
E\left[B^{a,b}_{r}(B^{a,b}_{s+\varepsilon}-B^{a,b}_s)\right]
E\left[f(B^{a,b}_{s})f''(B^{a,b}_{r})\right]\\
&\qquad\qquad+E\left[(B^{a,b}_{r+\varepsilon}-B^{a,b}_r)
(B^{a,b}_{s+\varepsilon}-B^{a,b}_s)\right]E\left[
f(B^{a,b}_{s})f(B^{a,b}_{r})\right]
\end{align*}
and
\begin{align*}
\Phi_{s,r}(2,\varepsilon_1,\varepsilon_2)&=
E\left[B^{a,b}_{s}(B^{a,b}_{r+\varepsilon_2}-B^{a,b}_r)\right]E\left[
f'(B^{a,b}_{s})f(B^{a,b}_{r})(B^{a,b}_{s+\varepsilon_1}-B^{a,b}_s) \right]\\
&\qquad +E\left[B^{a,b}_{r}(B^{a,b}_{r+\varepsilon_2}-B^{a,b}_r)\right]E\left[
f(B^{a,b}_{s})f'(B^{a,b}_{r})(B^{a,b}_{s+\varepsilon_1}-B^{a,b}_s) \right]\\
&\qquad\qquad +E\left[(B^{a,b}_{s+\varepsilon_1}-B^{a,b}_s)(B^{a,b}_{r+\varepsilon_2}-B^{a,b}_r)
\right]E\left[f(B^{a,b}_{s})f(B^{a,b}_{r})\right]\\
&=E\left[B^{a,b}_{s}(B^{a,b}_{r+\varepsilon_2}-B^{a,b}_r)\right]
E\left[B^{a,b}_{s}(B^{a,b}_{s+\varepsilon_1}-B^{a,b}_s)\right]
E\left[f''(B^{a,b}_{s})f(B^{a,b}_{r})\right]\\
&+E\left[B^{a,b}_{s}(B^{a,b}_{r+\varepsilon_2}-B^{a,b}_r)\right]
E\left[B^{a,b}_{r}(B^{a,b}_{s+\varepsilon_1}-B^{a,b}_s)\right]
E\left[f'(B^{a,b}_{s})f'(B^{a,b}_{r})\right]\\
&\qquad +E\left[B_{r}(B^{a,b}_{r+\varepsilon_2}-B_r)\right]
E\left[B^{a,b}_{s}(B^{a,b}_{s+\varepsilon_1}-B_s)\right]
E\left[f'(B^{a,b}_{s})f'(B^{a,b}_{r})\right]\\
&\qquad +E\left[B^{a,b}_{r}(B^{a,b}_{r+\varepsilon_2}-B^{a,b}_r)\right]
E\left[B^{a,b}_{r}(B^{a,b}_{s+\varepsilon_1}-B^{a,b}_s)\right]
E\left[f(B^{a,b}_{s})f''(B^{a,b}_{r})\right]\\
&\qquad\qquad +E\left[(B^{a,b}_{s+\varepsilon_1}-B^{a,b}_s)(B^{a,b}_{r+\varepsilon_2}-B^{a,b}_r)
\right]E\left[f(B^{a,b}_{s})f(B^{a,b}_{r})\right].
\end{align*}
Denote
\begin{align*}
A_1(s,r,\varepsilon,j):&=\varepsilon_j^{1+b} E\left[B^{a,b}_{s}(B^{a,b}_{r+\varepsilon}-B^{a,b}_r)\right]
E\left[B^{a,b}_{s}(B^{a,b}_{s+\varepsilon}-B^{a,b}_s)\right]\\
&\qquad-\varepsilon^{1+b} E\left[B^{a,b}_{s}(B^{a,b}_{r+\varepsilon_2}-B^{a,b}_r)\right]
E\left[B^{a,b}_{s}(B^{a,b}_{s+\varepsilon_1}-B^{a,b}_s)\right]\\
A_{11}(s,r,\varepsilon,j):&=\varepsilon_j^{1+b}
E\left[B^{a,b}_{r}(B^{a,b}_{r+\varepsilon}-B^{a,b}_r)\right] E\left[B^{a,b}_{s}(B^{a,b}_{s+\varepsilon}-B^{a,b}_s)\right]\\
&\qquad-\varepsilon^{1+b} E\left[B^{a,b}_{r}(B^{a,b}_{r+\varepsilon_2}-B^{a,b}_r)\right]
E\left[B^{a,b}_{s}(B^{a,b}_{s+\varepsilon_1}-B^{a,b}_s)\right]\\
A_{12}(s,r,\varepsilon,j):&=\varepsilon_j^{1+b}
E\left[B^{a,b}_{s}(B^{a,b}_{r+\varepsilon}-B^{a,b}_r)\right]
E\left[B^{a,b}_{r}(B^{a,b}_{s+\varepsilon}-B^{a,b}_s)\right]\\
&\qquad-\varepsilon^{1+b}
E\left[B^{a,b}_{s}(B^{a,b}_{r+\varepsilon_2}-B^{a,b}_r)\right]
E\left[B^{a,b}_{r}(B^{a,b}_{s+\varepsilon_1}-B^{a,b}_s)\right]\\
A_3(s,r,\varepsilon,j):&=\varepsilon_j^{1+b} E\left[B^{a,b}_{r}(B^{a,b}_{r+\varepsilon}-B^{a,b}_r)\right]
E\left[B^{a,b}_{r}(B^{a,b}_{s+\varepsilon}-B^{a,b}_s)\right]\\
&\qquad-\varepsilon^{1+b} E\left[B^{a,b}_{r}(B^{a,b}_{r+\varepsilon_2}-B^{a,b}_r)\right]
E\left[B^{a,b}_{r}(B^{a,b}_{s+\varepsilon_1}-B^{a,b}_s)\right]\\
A_4(s,r,\varepsilon,j):&=\varepsilon_j^{1+b}
E\left[(B^{a,b}_{r+\varepsilon}-B^{a,b}_r) (B^{a,b}_{s+\varepsilon}-B^{a,b}_s)\right]\\
&\qquad-\varepsilon^{1+b}
E\left[(B^{a,b}_{s+\varepsilon_1}-B^{a,b}_s) (B^{a,b}_{r+\varepsilon_2}-B^{a,b}_r)\right]
\end{align*}
with $j=1,2$. It follows that
\begin{align*}
\varepsilon_j^{1+b}&\Phi_{s,r}(1,\varepsilon_i) -\varepsilon_i^{1+b}
\Phi_{s,r}(2,\varepsilon_1,\varepsilon_2)\\
&=A_1(s,r,\varepsilon_i,j)E\left[f''(B^{a,b}_{s})f(B^{a,b}_{r})\right]\\
&\qquad
+(A_{11}(s,r,\varepsilon_i,j)+A_{12}(s,r,\varepsilon_i,j)) E\left[f'(B^{a,b}_{s})f'(B^{a,b}_{r})\right]\\
&\qquad+A_3(s,r,\varepsilon_i,j)E\left[f(B^{a,b}_{s})f''(B^{a,b}_{r})\right]
+A_4(s,r,\varepsilon_i,j)E\left[f(B^{a,b}_{s})f(B^{a,b}_{r})\right]
\end{align*}
with $i,j\in\{1,2\}$ and $i\neq j$. In order to end the proof we claim
that the following convergence
\begin{align}\label{sec4-Con-eq1}
\frac1{\varepsilon_i^{2+2b}\varepsilon_j^{1+b}} \int_{\varepsilon_2}^{t+\varepsilon_1} \int_{\varepsilon_2}^{t+\varepsilon_1} \left\{\varepsilon_j^{1+b}\Phi_{s,r}(1,\varepsilon_i) -\varepsilon_i^{1+b}
\Phi_{s,r}(2,\varepsilon_1,\varepsilon_2)\right\}(sr)^bdrds \longrightarrow 0
\end{align}
with $i,j\in\{1,2\}$ and $i\neq j$, as $\varepsilon_1,\varepsilon_2\to 0$. By symmetry, we only need to show that this holds for $i=1,j=2$. This will be done in three parts.

{\bf Part I}. The following convergence holds:
\begin{align}\label{step1-eq1}
\int_{\varepsilon_2}^{t+\varepsilon_1} \int_{\varepsilon_2}^{t+\varepsilon_1}
\frac{A_4(s,r,\varepsilon_1,2)} {\varepsilon_1^{2+2b}\varepsilon_2^{1+b}} E[f(B^{a,b}_{s})f(B^{a,b}_{r})](sr)^bdrds \longrightarrow 0,
\end{align}
as $\varepsilon_1,\varepsilon_2\to 0$. Clearly, we have
\begin{align*}
|E[(B^{a,b}_{r+\varepsilon_i}-B^{a,b}_r)&(B^{a,b}_{s+\varepsilon_j} -B^{a,b}_s)]|\leq \sqrt{E[(B^{a,b}_{r+\varepsilon_i}-B^{a,b}_r)^2 E(B^{a,b}_{s+\varepsilon_j} -B^{a,b}_s)^2]}\\
&\leq C_{a,b}[(r+\varepsilon_i)(s+\varepsilon_j)]^{a/2} \varepsilon_i^{(1+b)/2}\varepsilon_j^{(1+b)/2}\\
&\leq C_{a,b}[(r+\varepsilon_i)(s+\varepsilon_j)]^{a/2} \frac{\varepsilon_i^{1+b+\gamma} \varepsilon_j^{1+b}
}{(s-r)^{1+b+\gamma}}
\end{align*}
for $0<s-r<\varepsilon_i\wedge \varepsilon_j\leq 1$ and $0<\gamma<-b$, where $i,j\in \{1,2\}$. Combining this with~\eqref{sec3-eq3.8==} (taking $\beta=1$), we get
\begin{align*}
|E[(&B^{a,b}_{r+\varepsilon_1}-B^{a,b}_r)(B^{a,b}_{s+\varepsilon_1} -B^{a,b}_s)]|\\
&\leq C_{a,b}\left(r^a\vee (s+\varepsilon_1)^a\right) \left(\frac{\varepsilon_1^{2+b-\nu}}{(s-r)^{1-\nu}}
1_{\{s-r>\varepsilon_1\}}+ \frac{\varepsilon_1^{2+2b+\gamma}}{(s-r)^{1+b+\gamma}}
1_{\{0<s-r\leq \varepsilon_1\}}\right)\\
&\leq C_{a,b}\left(r^a\vee (s+\varepsilon_1)^a\right) \frac{\varepsilon_1^{2+b-\nu}}{(s-r)^{1-\nu}}
\end{align*}
and
\begin{align*}
|E[(&B^{a,b}_{s+\varepsilon_1} -B^{a,b}_s)(B^{a,b}_{r+\varepsilon_2}-B^{a,b}_r)]|\\
&\leq C_{a,b}\left(r^a\vee (s+\varepsilon_1)^a\right)\left( \frac{\varepsilon_1^{1+b-\nu}\varepsilon_2}{(s-r)^{1-\nu}}
1_{\{s-r>\varepsilon_2\}}+ \frac{\varepsilon_1^{1+b+\gamma}\varepsilon_2^{1+b}}{(s-r)^{1+b+\gamma}}
1_{\{0<s-r\leq \varepsilon_2\}}\right)\\
&\leq C_{a,b}\left(r^a\vee (s+\varepsilon_1)^a\right) \frac{\varepsilon_1^{1-\nu}\varepsilon_2^{1+b}}{(s-r)^{1-\nu}}
\end{align*}
for all $s>r>0$ and $0<\nu<(1+b)\wedge (-b)$ by taking $b+\gamma+\nu\geq 0$. It follows that
\begin{align*}
\frac1{\varepsilon_1^{2+2b}\varepsilon_2^{1+b}} |A_4(s,r,\varepsilon_1,2)|&\leq  \frac{C_{a,b}\left(r^a\vee (s+\varepsilon_1)^a\right)}{(s-r)^{1-\nu}} \varepsilon_1^{-b-\nu}\longrightarrow 0
\end{align*}
for all $s>r>0$ and $\nu<-b$, as $\varepsilon_1,\varepsilon_2\to 0$.

On the other hand, from the above proof we have also
\begin{align*}
\frac1{\varepsilon_1^{2+2b}\varepsilon_2^{1+b}} |A_4(s,r,\varepsilon_1,2)|&\leq  \frac1{\varepsilon_1^{2+2b}} |E[(B^{a,b}_{r+\varepsilon_1}-B^{a,b}_r)(B^{a,b}_{s+\varepsilon_1} -B^{a,b}_s)]|\\
&\qquad+\frac1{\varepsilon_1^{1+b}\varepsilon_2^{1+b}}
|E[(B^{a,b}_{s+\varepsilon_1} -B^{a,b}_s)(B^{a,b}_{r+\varepsilon_2}-B^{a,b}_r)]|\\
&\leq C_{a,b}\frac{r^a\vee (s+\varepsilon_1)^a}{(s-r)^{1+b}}
\end{align*}
for all $s>r>0$ and $\varepsilon_1,\varepsilon_2>0$, and
$$
\int_{\varepsilon_2}^{t+\varepsilon_1} \int_{\varepsilon_2}^{t+\varepsilon_1} \frac{r^a}{(s-r)^{1+b}} |E[f(B^{a,b}_{s})f(B^{a,b}_{r})]|(sr)^bdrds\leq C_{a,b}\|f\|^2_{\mathscr H}
$$
for any $0<\varepsilon_1,\varepsilon_2<1$. Using Lebesgue's dominated convergence theorem we can deduce the convergence~\eqref{step1-eq1}.

{\bf Part II}. The following convergence holds:
\begin{align}\label{step2-eq1}
\int_{\varepsilon_2}^{t+\varepsilon_1} \int_{\varepsilon_2}^{t+\varepsilon_1}
\frac{A_{11}(s,r,\varepsilon_1,2)+A_{12}(s,r,\varepsilon_1,2)}{ \varepsilon_1^{2+2b}\varepsilon_2^{1+b}} E[f'(B^{a,b}_{s})f'(B^{a,b}_{r})](sr)^bdrds\to 0,
\end{align}
as $\varepsilon_1,\varepsilon_2\to 0$. By Lemma~\ref{lem3.4} we have
\begin{align*}
\frac1{\varepsilon_1^{2+2b}\varepsilon_2^{1+b}}& \left(|A_{11} (s,r,\varepsilon_1,2)|+|A_{12}(s,r,\varepsilon_1,2)|\right)\\
&\leq |E[B^{a,b}_{s}(B^{a,b}_{s+\varepsilon_1}-B^{a,b}_s)]|\\
&\qquad\qquad\cdot\left(\varepsilon_2^{1+b} |E[B^{a,b}_{r}(B^{a,b}_{r+\varepsilon_1}-B^{a,b}_r)]| +\varepsilon_1^{1+b} |E[B^{a,b}_{r}(B^{a,b}_{r+\varepsilon_2}-B^{a,b}_r)]|\right)\\
&\quad +|E[B^{a,b}_{r}(B^{a,b}_{s+\varepsilon_1}-B^{a,b}_s)]|\\
&\qquad\qquad\cdot\left(
\varepsilon_2^{1+b}|E[B^{a,b}_{s}(B^{a,b}_{r+\varepsilon_1}-B^{a,b}_r)]| +\varepsilon_1^{1+b}
|E[B^{a,b}_{s}(B^{a,b}_{r+\varepsilon_2}-B^{a,b}_r)]|\right)\\
&\leq C_{a,b}s^ar^a
\end{align*}
for all $s>r>0$, and moreover, by Lemma~\ref{lem3.1} and Lemma~\ref{lem3.5} we have
\begin{align*}
&\int_{\varepsilon_2}^{t+\varepsilon_1} \int_{\varepsilon_2}^{t+\varepsilon_1}
s^ar^a|E[f'(B^{a,b}_{s})f'(B^{a,b}_{r})]|(sr)^bdrds\\
&\;\;\leq C_{a,b}\int_{\varepsilon_2}^{t+\varepsilon_1} \int_{\varepsilon_2}^{t+\varepsilon_1}
\left(\frac{(rs)^{\frac12(1+a+b)}}{\rho^2_{s,r}} +\frac{\mu_{s,r}}{\rho^2_{s,r}}\right)\left(E|f(B^{a,b}_s)|^2
+E|f(B^{a,b}_r)|^2\right)(sr)^{a+b}drds\\
&\;\;=C_{a,b}\int_{\varepsilon_2}^{t+\varepsilon_1} \int_{\varepsilon_2}^{t+\varepsilon_1}
\left(\frac{(rs)^{\frac12(1+a+b)}}{\rho^2_{s,r}} +\frac{\mu_{s,r}}{\rho^2_{s,r}}\right)E|f(B^{a,b}_s)|^2(sr)^{a+b}drds \leq C_{a,b}\|f\|_{\mathscr H}^2
\end{align*}
for all $\varepsilon_1,\varepsilon_2>0$

On the other hand, we have
\begin{align*}
E[B^{a,b}_{r}&(B^{a,b}_{r+\varepsilon}-B^{a,b}_r)]\\
&=\frac1{2{\mathbb B}(1+a,1+b)}
\left(\int_0^ru^a(r+\varepsilon-u)^bdu-{\mathbb B}(1+a,1+b)r^{1+a+b}\right)\\
&=\frac1{2{\mathbb B}(1+a,1+b)} \int_0^{\frac{r}{r+\varepsilon}}x^a(1-x)^bdx \left\{(r+\varepsilon)^{1+a+b}- r^{1+a+b}\right\}\\
&\qquad-\frac{r^{1+a+b}}{2{\mathbb B}(1+a,1+b)} \int_{\frac{r}{r+\varepsilon}}^1x^a(1-x)^bdx\\
&\equiv \frac1{2{\mathbb B}(1+a,1+b)} \left\{\Lambda_1(r,\varepsilon)-\Lambda_2(r,\varepsilon)\right\}
\end{align*}
for $r>0$ and $\varepsilon>0$, and
\begin{align*}
\Bigl|\varepsilon_2^{1+b}
E[&B^{a,b}_{r}(B^{a,b}_{r+\varepsilon_1}-B^{a,b}_r)]
-\varepsilon_1^{1+b}E[B^{a,b}_{r}(B^{a,b}_{r+\varepsilon_2} -B^{a,b}_r)]\Bigr|\\
&\leq \varepsilon_2^{1+b}|\Lambda_1(r,\varepsilon_1)| +\varepsilon_1^{1+b}|\Lambda_1(r,\varepsilon_2)| +\left|\varepsilon_2^{1+b}\Lambda_2(r,\varepsilon_1) -\varepsilon_1^{1+b}\Lambda_2(r,\varepsilon_2)\right|
\end{align*}
for $r>0$ and $\varepsilon_1>\varepsilon_2>0$. Notice that for $r>0$ and $\varepsilon>0$,
\begin{align*}
|\Lambda_1(r,\varepsilon)|&= \int_0^{\frac{r}{r+\varepsilon}}x^a(1-x)^bdx \left|(r+\varepsilon)^{1+a+b}- r^{1+a+b}\right|\\
&\leq C_{a,b}\left(\frac{r}{r+\varepsilon}\right)^{1+a} \left|(r+\varepsilon)^{1+a+b}- r^{1+a+b}\right|\\
&\leq C_{a,b}\left(\frac{r}{r+\varepsilon}\right)^{1+a}
(r+\varepsilon)^{a+b}\varepsilon\leq C_{a,b}r^{a+b}\varepsilon
\end{align*}
by~\eqref{lem3.4-eq7} and the fact
\begin{equation}\label{step2-eq2-1}
y^\alpha-x^\alpha\leq C_{\alpha}y^{\alpha-1}(y-x),\quad y>x>0,\quad \alpha>0,
\end{equation}
and
\begin{align*}
|\varepsilon_2^{1+b}&\Lambda_2(r,\varepsilon_1) -\varepsilon_1^{1+b}\Lambda_2(r,\varepsilon_2)|\\
&=C_{a,b}r^{1+a+b}\left|\varepsilon_2^{1+b} \int_{\frac{r}{r+\varepsilon_1}}^1x^a(1-x)^bdx-
\varepsilon_1^{1+b} \int_{\frac{r}{r+\varepsilon_2}}^1x^a(1-x)^bdx\right|\\
&=C_{a,b}r^{1+a+b}(\varepsilon_1\varepsilon_2)^{1+b} \left|\frac1{\varepsilon_1^{1+b}} \int_{\frac{r}{r+\varepsilon_1}}^1x^a(1-x)^bdx-
\frac1{\varepsilon_2^{1+b}} \int_{\frac{r}{r+\varepsilon_2}}^1x^a(1-x)^bdx\right|\\
&\equiv C_{a,b}r^{1+a+b}(\varepsilon_1\varepsilon_2)^{1+b}
\Lambda_3(r,\varepsilon_1,\varepsilon_2)
\end{align*}
for $r>0$ and $\varepsilon_1>\varepsilon_2>0$. We get
\begin{align*}
\frac1{\varepsilon_1^{2+2b}\varepsilon_2^{1+b}}& |A_{11}(s,r,\varepsilon_1,2)|\leq \frac1{\varepsilon_1^{2+2b}\varepsilon_2^{1+b}} |E[B^{a,b}_{s}(B^{a,b}_{s+\varepsilon_1}-B^{a,b}_s)]|\\
&\qquad\cdot\left(\varepsilon_2^{1+b}
E[B^{a,b}_{r}(B^{a,b}_{r+\varepsilon_1}-B^{a,b}_r)] -\varepsilon_1^{1+b} E[B^{a,b}_{r}(B^{a,b}_{r+\varepsilon_2}-B^{a,b}_r)]\right)\\
&\leq C_{a,b}s^ar^{a+b}\left(\varepsilon_1^{-b}+\varepsilon_2^{-b}+
r\Lambda_3(r,\varepsilon_1,\varepsilon_2)\right)\longrightarrow 0
\quad (\varepsilon_1,\varepsilon_2\to 0)
\end{align*}
for all $s,r>0$ and $-1<b<0$ because
$$
\Lambda_3(r,\varepsilon_1,\varepsilon_2)\longrightarrow 0
$$
as $0<\varepsilon_2<\varepsilon_1\to 0$ by the convergence
$$
\lim_{x\uparrow 1}\frac1{(1-x)^{1+b}}\int_x^1u^a(1-u)^bdu=\frac1{1+b},
$$
and similarly we also have
\begin{align*}
\frac1{\varepsilon_1^{2+2b}\varepsilon_2^{1+b}}&|A_{12} (s,r,\varepsilon_1,2)| \longrightarrow 0\quad (\varepsilon_1,\varepsilon_2\to 0)
\end{align*}
for all $s,r>0$, and the convergence~\eqref{step2-eq1} follow from Lebesgue's dominated convergence theorem again.

{\bf Part III}. The following convergence holds:
\begin{equation}\label{step3-eq1}
\begin{split}
\frac1{\varepsilon_1^{2+2b}\varepsilon_2^{1+b}} \int_{\varepsilon_2}^{t+\varepsilon_1} &\int_{\varepsilon_2}^{t+\varepsilon_1} \Bigl(|A_1(s,r,\varepsilon_1,2)E[f''(B^{a,b}_{s})f(B^{a,b}_{r})]|\\
&\qquad +|A_3(s,r,\varepsilon_1,2)E[f(B^{a,b}_{s})f''(B^{a,b}_{r})]|
\Bigr)(sr)^bdrds \longrightarrow 0,
\end{split}
\end{equation}
as $\varepsilon_1,\varepsilon_2\to 0$. Clearly, for all $s>r>0$ and all $\varepsilon_1,\varepsilon_2>0$ we have
\begin{align*}
\frac1{\varepsilon_1^{2+2b}\varepsilon_2^{1+b}} (|&A_1(s,r,\varepsilon_1,2)|+|A_3(s,r,\varepsilon_1,2)|)
\leq \frac1{\varepsilon_1^{2+2b}\varepsilon_2^{1+b}} |E[B^{a,b}_{s}(B^{a,b}_{s+\varepsilon_1}-B^{a,b}_s)]|\\
&\qquad\cdot\left(\varepsilon_2^{1+b}| E[B^{a,b}_{s}(B^{a,b}_{r+\varepsilon_1}-B^{a,b}_r)]|+ \varepsilon_1^{1+b}|E[B^{a,b}_{s}(B^{a,b}_{r+\varepsilon_2}-B^{a,b}_r)] |\right)\\
&+\frac1{\varepsilon_1^{2+2b}\varepsilon_2^{1+b}} |E[B^{a,b}_{r}(B^{a,b}_{s+\varepsilon_1}-B^{a,b}_s)]|\\
&\qquad \cdot\left(
\varepsilon_2^{1+b} |E[B^{a,b}_{r}(B^{a,b}_{r+\varepsilon_1}-B^{a,b}_r)]|
+\varepsilon_1^{1+b} |E[B^{a,b}_{r}(B^{a,b}_{r+\varepsilon_2}-B^{a,b}_r)]|\right)\\
&\leq C_{a,b}s^ar^a
\end{align*}
by Lemma~\ref{lem3.4}, and
\begin{align*}
\int_{\varepsilon_2}^{t+\varepsilon_1} &\int_{\varepsilon_2}^{t+\varepsilon_1} s^ar^a\Bigl(|E[f''(B^{a,b}_{s})f(B^{a,b}_{r})]| +|E[f(B^{a,b}_{s})f''(B^{a,b}_{r})]|
\Bigr)(sr)^bdrds\\
&\leq C_{a,b}\int_{\varepsilon_2}^{t+\varepsilon_1} \int_{\varepsilon_2}^{t+\varepsilon_1} s^ar^a\frac{r^{1+a+b}+s^{1+a+b}}{\rho^2_{s,r}}( E|f(B^{a,b}_s)|^2+E|f(B^{a,b}_r)|^2)(sr)^bdrds\\
&\leq C_{a,b} \int_{\varepsilon_2}^{t+\varepsilon_1} s^{a+b}E|f(B^{a,b}_s)|^2ds\leq C_{a,b}\|f\|_{{\mathscr H}}^2
\end{align*}
by Lemma~\ref{lem3.5} and Lemma~\ref{lem3.1}.

On the other hand, by the fact~\eqref{step2-eq2-1} we have
\begin{equation}\label{sec4-eq4.2200-2}
\begin{split}
\int_r^{r+\varepsilon}u^a(s-u)^bdu&\leq \frac1{1+b}\left(r^a\vee (r+\varepsilon)^a\right) \left((s-r)^{1+b}-(s-r-\varepsilon)^{1+b}\right)\\
&\leq \frac1{1+b}\left(r^a\vee s^a\right)(s-r)^{1+b-\beta}\varepsilon^\beta
\end{split}
\end{equation}
for $r+\varepsilon<s$ and $1+b<\beta<1$, which implies that
\begin{equation}\label{sec4-eq4.2200}
\begin{split}
|E[B^{a,b}_s&(B^{a,b}_{r+\varepsilon}-B^{a,b}_r)]| 1_{\{s-r>\varepsilon\}}=C_{a,b}\left(\int_0^{r+\varepsilon}u^a[(s-u)^b +({r+\varepsilon}-u)^b]du\right.\\
&\hspace{3cm}\left.-
\int_0^ru^a[(s-u)^b+(r-u)^b]du\right)1_{\{s-r>\varepsilon\}}\\
&=C_{a,b}\left(\int_r^{r+\varepsilon}u^a(s-u)^bdu +(r+\varepsilon)^{1+a+b}-r^{1+a+b} \right)1_{\{s-r>\varepsilon\}}\\
&\leq C_{a,b}\varepsilon^\beta \left((r^a\vee s^a)(s-r)^{1+b-\beta} +(r+\varepsilon)^{1+a+b-\beta}\right) 1_{\{s-r>\varepsilon\}}
\end{split}
\end{equation}
for all $1+b<\beta<1$. Moreover, by Lemma~\ref{lem3.4} we have
\begin{align*}
|E[B^{a,b}_s(B^{a,b}_{r+\varepsilon}-B^{a,b}_r)]| 1_{\{s-r<\varepsilon\}}&\leq C_{a,b}s^a\varepsilon^{1+b}1_{\{0<s-r<\varepsilon\}}\\
&\leq C_{a,b}s^a \varepsilon^{\beta}(s-r)^{1+b-\beta}1_{\{0<s-r<\varepsilon\}}
\end{align*}
for all $1+b<\beta<1$. It follows from Lemma~\ref{lem3.4} that
\begin{align*}
\frac1{\varepsilon_1^{2+2b}\varepsilon_2^{1+b}} &|A_1(s,r,\varepsilon_1,2)|
\leq \frac1{\varepsilon_1^{2+2b}\varepsilon_2^{1+b}} |E[B^{a,b}_{s}(B^{a,b}_{s+\varepsilon_1}-B^{a,b}_s)]|\\
&\qquad \cdot\left(\varepsilon_2^{1+b}| E[B^{a,b}_{s}(B^{a,b}_{r+\varepsilon_1}-B^{a,b}_r)]|+ \varepsilon_1^{1+b}|E[B^{a,b}_{s}(B^{a,b}_{r+\varepsilon_2}-B^{a,b}_r)] |\right)\\
&\leq C_{a,b}s^a
\left((r^a\vee s^a)(s-r)^{1+b-\beta} +(r+\varepsilon_1)^{1+a+b-\beta}\right) \varepsilon_1^{\beta-1-b}
1_{\{s-r>\varepsilon_1\}}\\
&\quad+C_{a,b}s^a
\left((r^a\vee s^a)(s-r)^{1+b-\beta} +(r+\varepsilon_2)^{1+a+b-\beta}\right) \varepsilon_2^{\beta-1-b}
1_{\{s-r>\varepsilon_2\}}\\
&\quad+C_{a,b}s^{2a}(s-r)^{1+b-\beta}\varepsilon^{\beta}_1
1_{\{0<s-r<\varepsilon_1\}} +C_{a,b}s^{2a}(s-r)^{1+b-\beta}\varepsilon^{\beta}_2
1_{\{0<s-r<\varepsilon_2\}}\\
&\longrightarrow 0\quad (\varepsilon_1,\varepsilon_2\to 0)
\end{align*}
for all $s>r>0$. Similarly, for all $s>r>0$ and $1+b<\beta<1$ we have also
\begin{align*}
\frac1{\varepsilon_1^{2+2b}\varepsilon_2^{1+b}} &|A_3(s,r,\varepsilon_1,2)|\leq \frac1{\varepsilon_1^{2+2b}\varepsilon_2^{1+b}} |E[B^{a,b}_{r}(B^{a,b}_{s+\varepsilon_1}-B^{a,b}_s)]|\\
&\qquad \cdot\left(
\varepsilon_2^{1+b} |E[B^{a,b}_{r}(B^{a,b}_{r+\varepsilon_1}-B^{a,b}_r)]|
+\varepsilon_1^{1+b} |E[B^{a,b}_{r}(B^{a,b}_{r+\varepsilon_2}-B^{a,b}_r)]|\right)\\
&\leq C_{a,b}(s-r)^{b}s^{a-\gamma+1}r^a\varepsilon_1^{\gamma-1-b} \longrightarrow 0\quad (\varepsilon_1,\varepsilon_2\to 0)
\end{align*}
by Lemma~\ref{lem3.4} and using the estimate
\begin{align*} |E[B^{a,b}_{r}(B^{a,b}_{s+\varepsilon_1}-B^{a,b}_s)]&=
\frac1{2{\mathbb B}(1+a,1+b)} \int_0^ru^a[(s-u)^b -({s+\varepsilon_1}-u)^b]du \\
&=\frac1{2{\mathbb B}(1+a,1+b)} \int_0^ru^a\frac{({s+\varepsilon_1}-u)^{-b}-(s-u)^{-b}}
{({s+\varepsilon_1}-u)^{-b}(s-u)^{-b}}du\\
&\leq \frac1{2{\mathbb B}(1+a,1+b)}\varepsilon_1^\gamma
\int_0^r\frac{u^a}
{({s+\varepsilon_1}-u)^{\gamma}(s-u)^{-b}}du\\
&\leq \frac1{2{\mathbb B}(1+a,1+b)}\frac{\varepsilon_1^\gamma}{(s-r)^{-b}}
\int_0^s\frac{u^a}{(s-u)^{\gamma}}du\\ &=C_{a,b}(s-r)^{b}s^{a-\gamma+1}\varepsilon_1^\gamma
\end{align*}
for all $s>r>0,\;(-b)\vee (1+b)<\gamma<1$. Consequently, Lebesgue's dominated convergence theorem implies that the convergence~\eqref{step3-eq1} holds again.

Thus, we have established the convergence~\eqref{sec4-Con-eq1} for $i=1,j=2$ and the theorem follows.
\end{proof}

\begin{corollary}\label{cor5.2}
Let $f,f_1,f_2,\ldots\in {\mathscr H}$. If $f_n\to f$ in ${\mathscr
H}$, as $n$ tends to infinity, then we have
$$
[f_n(B^{a,b}),B^{a,b}]^{(a,b)}_t\longrightarrow [f(B^{a,b}),B^{a,b}]^{(a,b)}_t
$$
in $L^2$ as $n\to \infty$.
\end{corollary}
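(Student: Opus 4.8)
The plan is to exploit the linearity of the generalized quadratic covariation in $f$ together with the a priori $L^2$ bound already established in Theorem~\ref{th6.2}. No new hard analysis is needed; the corollary is essentially a continuity statement for a bounded linear operator.

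First I would observe that for each fixed $\varepsilon>0$ and $t\geq 0$ the quantity $J_\varepsilon(f,t)$ depends linearly on $f$, since $f$ enters the integrand only through the difference $f(B^{a,b}_{s+\varepsilon})-f(B^{a,b}_s)$. Because $\mathscr H$ is a vector space, $f_n-f\in\mathscr H$, and Theorem~\ref{th6.2} guarantees that the generalized quadratic covariations of $f$, of each $f_n$, and of $f_n-f$ all exist as $L^2$-limits. Passing to the limit in $J_\varepsilon$ then preserves linearity, so
\begin{equation*}
[f_n(B^{a,b}),B^{a,b}]^{(a,b)}_t-[f(B^{a,b}),B^{a,b}]^{(a,b)}_t =[(f_n-f)(B^{a,b}),B^{a,b}]^{(a,b)}_t
\end{equation*}
for every $n$ and every $t\in[0,1]$.

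Next I would apply the estimate of Theorem~\ref{th6.2} to the function $f_n-f\in\mathscr H$, which yields
\begin{equation*}
E\left|[f_n(B^{a,b}),B^{a,b}]^{(a,b)}_t-[f(B^{a,b}),B^{a,b}]^{(a,b)}_t\right|^2 =E\left|[(f_n-f)(B^{a,b}),B^{a,b}]^{(a,b)}_t\right|^2 \leq C_{a,b,T}\,\|f_n-f\|_{\mathscr H}^2.
\end{equation*}
Since $f_n\to f$ in $\mathscr H$ means precisely that $\|f_n-f\|_{\mathscr H}\to 0$ as $n\to\infty$, the right-hand side tends to zero, giving the claimed $L^2$-convergence.

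The only point requiring care is the justification of linearity at the level of limits, namely that the $L^2$-limit of $J_\varepsilon(f_n,t)-J_\varepsilon(f,t)$ can be split as the difference of the two separate limits. This is immediate once the existence of all the relevant limits is known, which Theorem~\ref{th6.2} supplies, so the argument demands no estimates beyond that theorem. Hence there is no substantive obstacle, and the corollary follows.
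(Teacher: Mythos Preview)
Your argument is correct and follows exactly the same route as the paper: use linearity of the generalized quadratic covariation together with the a priori bound \eqref{th4.1-eq} from Theorem~\ref{th6.2} applied to $f_n-f$. The paper's proof is a one-line application of that bound, and your version simply makes the linearity step explicit.
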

\begin{proof}
The corollary follows from
$$
E\left|[f_n(B^{a,b}),B^{a,b}]^{(a,b)}_t -[f(B^{a,b}),B^{a,b}]^{(a,b)}_t\right|^2\leq
C_{a,b}\|f_n-f\|_{\mathscr H}^2\to 0,
$$
as $n$ tends to infinity.
\end{proof}

\section{The generalized Bouleau-Yor identity with $b<0$}\label{sec7}

In this section, we study the Bouleau-Yor identity. It is known that the quadratic covariation $[f(B),B]$ of Brownian motion $B$ can be characterized
as
$$
[f(B),B]_t=-\int_{\mathbb R}f(x){\mathscr L}^{B}(dx,t),
$$
where $f$ is locally square integrable, ${\mathscr L}^{B}(x,t)$
is the local time of Brownian motion and the quadratic covariation $\bigl[f(B),B\bigr]$ (see Russo-Vallois~\cite{Russo2}) is defined by
$$
\bigl[f(B),B\bigr]:=\lim_{\varepsilon\downarrow 0}\frac{1}{\varepsilon} \int_0^{t}\left\{ f(B_{s+\varepsilon})
-f(B_s)\right\}(B_{s+\varepsilon}-B_s)ds,
$$
provided the limit exists uniformly in probability. This is called the Bouleau-Yor identity. More works for this can be found in
Bouleau-Yor~\cite{Bouleau}, Eisenbaum~\cite{Eisen1}, F\"ollmer {\it
et al}~\cite{Follmer}, Feng--Zhao~\cite{Feng,Feng3},
Peskir~\cite{Peskir1}, Rogers--Walsh~\cite{Rogers2},
Yan et al~\cite{Yan4,Yan1,Yan2} and the references therein. However, the Bouleau-Yor identity is not true for weighted-fBm with $b<0$ because
$$
[B^{a,b},B^{a,b}]_t=+\infty
$$
for all $t>0$ and $-1<b<0$. In this section, we shall obtain a generalized Bouleau-Yor identity based on the generalized quadratic covariation defined in Section~\ref{sec6}

Recall that for any closed interval $I\subset {\mathbb R}_{+}$ and for any $x\in {\mathbb R}$, the local time $L(x,I)$ of $B^{a,b}$ is defined as
the density of the occupation measure $\mu_I$ defined by
$$
\mu_I(A)=\int_I1_A(B^{a,b}_s)ds
$$
It can be shown (see Geman and Horowitz~\cite{Geman}, Theorem 6.4) that the following occupation density formula holds:
$$
\int_Ig(B^{a,b}_s,s)ds=\int_{\mathbb R}dx\int_Ig(x,s) L(x,ds)
$$
for every Borel function $g(x,t)\geq 0$ on $I\times {\mathbb R}$. Thus, Lemma~\ref{lem3.0} in Section~\ref{sec3} and Theorem 21.9 in Geman-Horowitz~\cite{Geman} together imply that the following result holds.

\begin{corollary}
Let $a>-1,|b|<1,|b|<1+a$ and let $L(x,t):=L(x, [0,t])$ be the local time of $B^{a,b}$ at $x$. Then $L\in L^2(\lambda\times P)$ for all $t\geq 0$ and $(x,t)\mapsto L(x,t)$ is jointly continuous if and only if $a+b<3$, where $\lambda$ denotes Lebesgue measure. Moreover, the occupation formula
\begin{equation}\label{sec2-1-eq1}
\int_0^t\psi(B^{a,b}_s,s)ds=\int_{\mathbb R}dx\int_0^t\psi(x,s) L(x,ds)
\end{equation}
holds for every continuous and bounded function $\psi(x,t):{\mathbb
R}\times {\mathbb R}_{+}\rightarrow {\mathbb R}$ and any $t\geqslant
0$.
\end{corollary}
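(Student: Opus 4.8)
The plan is to read this off from the Gaussian local-time theory of Geman and Horowitz~\cite{Geman} by verifying a single integrability condition, the input being the two-sided increment estimate of Lemma~\ref{lem3.0}. All of the probabilistic content is already packaged in Theorem~21.9; what remains is to convert its abstract hypothesis into the explicit threshold $a+b<3$.

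First I would recall the criterion in the Gaussian setting. Writing $Q(s,r)=E[(B^{a,b}_s-B^{a,b}_r)^2]$ and using $E\exp\{i\xi(B^{a,b}_s-B^{a,b}_r)\}=\exp\{-\tfrac12\xi^2Q(s,r)\}$, Fourier inversion of the occupation density gives
$$
E\int_{\mathbb R}L(x,t)^2\,dx=\frac1{\sqrt{2\pi}}\int_0^t\int_0^t\frac{ds\,dr}{\sqrt{Q(s,r)}}.
$$
Hence $L(\cdot,t)\in L^2(\lambda\times P)$, together with absolute continuity of the occupation measure and the occupation formula~\eqref{sec2-1-eq1}, holds exactly when this double integral is finite; this is precisely the hypothesis of Theorem~21.9, which in addition delivers the joint continuity of $(x,t)\mapsto L(x,t)$ once the accompanying moment bounds — controlled through the strong local nondeterminism with $\phi(r)=r^{1+b}$ from Yan et al.~\cite{Yan3} — are in force.

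Next I would evaluate the integral using $Q(s,r)\asymp(s\vee r)^a|s-r|^{1+b}$ from~\eqref{sec3-eq3.1}. By symmetry it suffices to work on $\{0<r<s<t\}$, where $(s\vee r)^{-a/2}=s^{-a/2}$. The inner integral
$$
\int_0^s(s-r)^{-(1+b)/2}\,dr=\frac2{1-b}\,s^{(1-b)/2}
$$
is finite because $b<1$, and then
$$
\int_0^t s^{-a/2}\,s^{(1-b)/2}\,ds=\int_0^t s^{(1-a-b)/2}\,ds<\infty\quad\Longleftrightarrow\quad \frac{1-a-b}2>-1\quad\Longleftrightarrow\quad a+b<3.
$$
The only singularity sits at the corner $(s,r)=(0,0)$; the diagonal contributes nothing since $(1+b)/2<1$. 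This simultaneously yields the $L^2(\lambda\times P)$ membership and the sharpness of the bound, and the occupation formula then follows from the general theory as the standard consequence of absolute continuity of the occupation measure.

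The main obstacle is a matter of sharpness rather than of the soft part of the argument. To obtain the genuine equivalence (``if and only if'') I must use the lower bound in $Q(s,r)\asymp(s\vee r)^a|s-r|^{1+b}$, not merely the upper bound~\eqref{sec3-eq3.2}: it is the lower control of the integrand near the origin that forces divergence of the double integral, hence failure of square-integrability, when $a+b\ge 3$. If one did not wish to invoke Theorem~21.9 as a black box, the genuinely technical step would instead be the higher even-moment estimates $E|L(x,t)-L(y,t)|^{2k}$ and their $t$-increment analogues required for Kolmogorov's criterion; these demand uniform lower bounds on the conditional variances appearing in the $2k$-fold Gaussian integrals, which is exactly where the strong local $\phi$-nondeterminism of~\cite{Yan3} enters.
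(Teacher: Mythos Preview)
Your proposal is correct and follows exactly the route the paper indicates: the paper gives no detailed argument at all, stating only that the corollary is implied by Lemma~\ref{lem3.0} together with Theorem~21.9 of Geman--Horowitz~\cite{Geman}. Your write-up is a faithful unpacking of precisely that citation, with the explicit computation $\int_0^t s^{(1-a-b)/2}\,ds<\infty\iff a+b<3$ that the paper leaves implicit.
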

Define the weighted local time ${\mathscr L}^{a,b}$ by
\begin{align*}
{\mathscr L}^{a,b}(x,t)&=(1+a+b)\int_0^ts^{a+b}d_sL(s,x)\\
&\equiv (1+a+b)\int_0^t\delta(B^{a,b}_s-x)s^{a+b}ds
\end{align*}
for $t\geq 0$ and $x\in {\mathbb R}$, where $\delta$ is the Dirac delta function. In this section, we define the integral
\begin{equation}\label{sec7-eq7.1}
\int_{\mathbb R}f(x){\mathscr L}^{a,b}(dx,t),
\end{equation}
for $b<0$, where $f$ is a Borel function. We shall use the generalized quadratic covariation to study it and obtain the following generalized Bouleau-Yor identity:
\begin{equation}\label{Bouleau-Yor}
[f(B^{a,b}),B^{a,b}]_t^{(a,b)}=-\kappa_{a,b}\int_{\mathbb R}f(x){\mathscr L}^{a,b}(dx,t)
\end{equation}
for all $f\in {\mathscr H}$, $-1<b<0$, $a>-1$, $-1<a+b<3$ and $t\geq 0$. We first give an extension of It\^{o} formula stated as follows.
\begin{theorem}\label{th7.1}
Let $a>-1$, $-1<b<0$, $|b|<1+a$ and let $f\in {\mathscr H}$ be a left continuous function with right limits. If $F$ is an absolutely continuous function with $F'=f$ and
\begin{equation}\label{sec7-Ito-con1}
\max\left\{|F(x)|,|f(x)|\right\}\leq C e^{\beta x^2},
\end{equation}
where $C$ and $\beta$ are some positive constants with
$\beta<\frac14T^{-(1+a+b)}$, then the It\^o type formula
\begin{equation}\label{sec7-eq7.3}
F(B^{a,b}_t)=F(0)+\int_0^tf(B^{a,b}_s)dB^{a,b}_s+\frac12(\kappa_{a,b})^{-1}
[f(B^{a,b}),B^{a,b}]^{(a,b)}_t
\end{equation}
holds for all $t\in [0,T]$.
\end{theorem}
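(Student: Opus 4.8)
The plan is to reduce the general statement to the $C^1$ case already settled in Section~\ref{sec6}, by a mollification-plus-limit argument whose engine is the $L^2$-continuity of the generalized quadratic covariation in $\mathscr H$ (Corollary~\ref{cor5.2}).

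\emph{Step 1 (the smooth case).} First I would verify \eqref{sec7-eq7.3} for $f\in C^1(\mathbb R)$ with $\max\{|F|,|f|,|f'|\}\le\kappa e^{\beta x^2}$. Then $F\in C^2(\mathbb R)$ with $F''=f'$, so Theorem~\ref{theorem-Ito} applies to $F$ and gives
$$
F(B^{a,b}_t)=F(0)+\int_0^tf(B^{a,b}_s)dB^{a,b}_s+\tfrac12(1+a+b)\int_0^tf'(B^{a,b}_s)s^{a+b}ds.
$$
Since $\int_0^tf'(B^{a,b}_s)ds^{1+a+b}=(1+a+b)\int_0^tf'(B^{a,b}_s)s^{a+b}ds$, the identity \eqref{sec6-eq6.2} shows the last term equals $\tfrac12(\kappa_{a,b})^{-1}[f(B^{a,b}),B^{a,b}]^{(a,b)}_t$, which is exactly \eqref{sec7-eq7.3}.

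\emph{Step 2 (mollification).} For general $f\in\mathscr H$ left continuous with right limits, I would set $f_n:=f*\zeta_n\in C^\infty(\mathbb R)$ with the mollifiers $\zeta_n$ of \eqref{sec7-eq7.4}, and $F_n(x):=F(0)+\int_0^xf_n(y)dy$, so that $F_n'=f_n$ and $F_n(0)=F(0)$. Because $\zeta_n$ is supported in $(0,2/n)$, the value $f_n(x)$ averages $f$ over $(x-2/n,x)$, so the left continuity of $f$ forces $f_n(x)\to f(x)$ at \emph{every} point (fixing the correct representative in the limit), and the bound \eqref{sec7-Ito-con1} propagates to $|f_n(x)|\le Ce^{\beta'x^2}$ with $\beta'$ as close to $\beta$ as desired, still strictly below $\tfrac14T^{-(1+a+b)}$. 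Standard mollification then yields $f_n\to f$ in $\mathscr H$, while the uniform growth control gives $F_n\to F$ pointwise with the same exponential bound. Applying Step 1 to each $f_n$ produces
$$
F_n(B^{a,b}_t)=F(0)+\int_0^tf_n(B^{a,b}_s)dB^{a,b}_s+\tfrac12(\kappa_{a,b})^{-1}[f_n(B^{a,b}),B^{a,b}]^{(a,b)}_t.
$$
Passing to the limit termwise, $F_n(B^{a,b}_t)\to F(B^{a,b}_t)$ in $L^2(\Omega)$ by dominated convergence (choosing $\beta'<\tfrac14T^{-(1+a+b)}\le\tfrac14t^{-(1+a+b)}$ makes $E[e^{2\beta'(B^{a,b}_t)^2}]<\infty$ for every $t\le T$), and $[f_n(B^{a,b}),B^{a,b}]^{(a,b)}_t\to[f(B^{a,b}),B^{a,b}]^{(a,b)}_t$ in $L^2(\Omega)$ by Corollary~\ref{cor5.2}.

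\emph{Step 3 (identifying the integral), the main obstacle.} From the two convergences above, the divergence integrals $\int_0^tf_n(B^{a,b}_s)dB^{a,b}_s=\delta^{a,b}(f_n(B^{a,b}_\cdot)1_{[0,t]})$ are automatically Cauchy in $L^2(\Omega)$, so their limit exists; the delicate point is to endow this limit with intrinsic meaning as $\int_0^tf(B^{a,b}_s)dB^{a,b}_s$ and to show independence of the approximating sequence. This is where $-1<b<0$ bites: the Hilbert space $\mathcal H_{a,b}$ is not a space of ordinary functions, so controlling $\|f_n(B^{a,b}_\cdot)-f(B^{a,b}_\cdot)\|_{L^2(\Omega;\mathcal H_{a,b})}$ directly is the hard part. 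I would resolve it using the closedness of $\delta^{a,b}$: the estimates underlying Lemma~\ref{lem6.2} bound the relevant $\mathcal H_{a,b}$-quantities by $C\|f_n\|_{\mathscr H}$, so $f_n\to f$ in $\mathscr H$ yields $f_n(B^{a,b}_\cdot)1_{[0,t]}\to f(B^{a,b}_\cdot)1_{[0,t]}$ in $L^2(\Omega;\mathcal H_{a,b})$, and closedness identifies the $L^2(\Omega)$-limit of the integrals with $\delta^{a,b}(f(B^{a,b}_\cdot)1_{[0,t]})=\int_0^tf(B^{a,b}_s)dB^{a,b}_s$. Taking limits in the displayed identity then delivers \eqref{sec7-eq7.3} for all $f\in\mathscr H$.
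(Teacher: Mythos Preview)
Your proposal follows essentially the same architecture as the paper: verify \eqref{sec7-eq7.3} in the $C^2$ case via Theorem~\ref{theorem-Ito} and \eqref{sec6-eq6.2}, mollify, pass to the limit using Corollary~\ref{cor5.2} for the quadratic-covariation term, and invoke closedness of the divergence for the stochastic integral. The one structural difference is that the paper inserts a \emph{localization} step before mollifying: it restricts to $\Omega_k=\{\sup_{t\le T}|B^{a,b}_t|<k\}$ and replaces $f$ by a truncation $f^{[k]}$ supported in $[-k,k]$, so that after mollification one has $f_n\to f$ and $F_n\to F$ \emph{uniformly}. This sidesteps exactly the obstacle you flag in Step~3, because uniform convergence of bounded functions trivially gives the $L^2$ convergence needed to feed into Lemma~\ref{complete2.1} (the paper's closedness lemma), without any appeal to $\mathcal H_{a,b}$-norm estimates.

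Your Step~3 has a small wobble: Lemma~\ref{lem6.2} bounds $E|I^\pm_\varepsilon(f,t)|^2$ (the pieces of $J_\varepsilon$), not $\|f(B^{a,b}_\cdot)\|_{L^2(\Omega;\mathcal H_{a,b})}$, so it does not directly yield $f_n(B^{a,b}_\cdot)\to f(B^{a,b}_\cdot)$ in $L^2(\Omega;\mathcal H_{a,b})$. Also, your parenthetical ``$\mathcal H_{a,b}$ is not a space of ordinary functions'' is the $b>0$ phenomenon; for $-1<b<0$ the space is a function space, just strictly smaller than $L^2$, which is why convergence there is nontrivial. The paper's localization neatly avoids having to analyze $\mathcal H_{a,b}$ at all here: once $f$ is bounded with compact support, the mollifications converge uniformly and Lemma~\ref{complete2.1} applies directly. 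Your route would work too if you either (i) add the localization, or (ii) prove the $L^2(\Omega;\mathcal H_{a,b})$ convergence from scratch rather than citing Lemma~\ref{lem6.2}.
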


Clearly, this is an analogue of F\"ollmer-Protter-Shiryayev's
formula F\"ollmer (see, for example, F\"ollmer {\it et al}~\cite{Follmer}). It is an improvement in terms of the
hypothesis on $f$ and it is also quite interesting itself.

Based on the localization argument and smooth approximation one can prove Theorem~\ref{th7.1}. The localization is that one can localize the domain ${\rm Dom}(\delta^{a,b})$ of the operator $\delta^{a,b}$ (see Nualart~\cite{Nualart1}). Suppose that
$\{(\Omega_n, u^{(n)}), n\geq 1\}\subset {\mathscr F}\times {\rm
Dom}(\delta^{a,b})$ is a localizing sequence for $u$, i.e., the
sequence $\{(\Omega_n, u^{(n)}), n\geq 1\}$ satisfies
\begin{itemize}
\item [(i)] $\Omega_n\uparrow \Omega$, a.s.;
\item [(ii)] $u=u^{(n)}$ a.s. on $\Omega_n$.
\end{itemize}
If $\delta^{a,b}(u^{(n)})= \delta^{a,b}(u^{(m)})$ a.s. on $\Omega_n$ for
all $m\geq n$, then, the divergence $\delta^{a,b}$ is the random
variable determined by the conditions
$$
\delta^{a,b}(u)|_{\Omega_n}=\delta^{a,b} (u^{(n)})|_{\Omega_n}\qquad
{\rm { for\;\; all\;\;}} n\geq 1,
$$
which may depend on the localizing sequence.
\begin{lemma}[Nualart~\cite{Nualart1}]\label{complete2.1} Let
$\{v^{(n)}\}$ be a sequence such that $v^{(n)}\to v$ in $L^2$, as $n\to \infty$ and let
$$
\delta^{a,b}(v^{(n)})=\int_0^Tv_s^{(n)}dB^{a,b}_s,\qquad n\geq 1
$$
exist in $L^2$. If $\delta^{a,b}(v^{(n)})\to G$ in $L^2$, then
$\delta^{a,b}(v)=\int_0^Tv_sdB^{a,b}_s$ exists in $L^2$ and equals to $G$.
\end{lemma}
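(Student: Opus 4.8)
The plan is to recognize this statement as the assertion that the divergence operator $\delta^{a,b}$ is \emph{closed}, which is the standard closedness property enjoyed by the adjoint of any densely defined operator on a Hilbert space. Here the convergence $v^{(n)}\to v$ is understood in $L^2(\Omega;{\mathcal H}_{a,b})$, while $\delta^{a,b}(v^{(n)})\to G$ holds in $L^2(\Omega)$. The whole proof rests on passing to the limit in the duality relationship that defines $\delta^{a,b}$, so first I would fix an arbitrary test functional $F\in{\mathbb D}^{1,2}$ and write the defining identity
$$
E\left[F\,\delta^{a,b}(v^{(n)})\right]=E\left\langle D^{a,b}F,v^{(n)}\right\rangle_{{\mathcal H}_{a,b}},\qquad n\geq 1 .
$$

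Next I would take $n\to\infty$ in each side separately. On the left, since $F\in{\mathbb D}^{1,2}\subset L^2(\Omega)$ and $\delta^{a,b}(v^{(n)})\to G$ in $L^2(\Omega)$, the Cauchy--Schwarz inequality gives
$$
\left|E\left[F\left(\delta^{a,b}(v^{(n)})-G\right)\right]\right|\leq \|F\|_{L^2(\Omega)}\,\big\|\delta^{a,b}(v^{(n)})-G\big\|_{L^2(\Omega)}\longrightarrow 0,
$$
so $E[F\,\delta^{a,b}(v^{(n)})]\to E[FG]$. On the right, because $F\in{\mathbb D}^{1,2}$ we have $D^{a,b}F\in L^2(\Omega;{\mathcal H}_{a,b})$, and since $v^{(n)}\to v$ in that same space a second application of Cauchy--Schwarz yields
$$
\left|E\left\langle D^{a,b}F,v^{(n)}-v\right\rangle_{{\mathcal H}_{a,b}}\right|\leq \big(E\|D^{a,b}F\|^2_{a,b}\big)^{1/2}\big(E\|v^{(n)}-v\|^2_{a,b}\big)^{1/2}\longrightarrow 0,
$$
whence $E\langle D^{a,b}F,v^{(n)}\rangle_{{\mathcal H}_{a,b}}\to E\langle D^{a,b}F,v\rangle_{{\mathcal H}_{a,b}}$. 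Equating the two limits produces $E[FG]=E\langle D^{a,b}F,v\rangle_{{\mathcal H}_{a,b}}$ for every $F\in{\mathbb D}^{1,2}$.

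To finish I would read this last identity back through the definition of $\delta^{a,b}$. It shows in particular that $\big|E\langle D^{a,b}F,v\rangle_{{\mathcal H}_{a,b}}\big|=|E[FG]|\leq \|G\|_{L^2(\Omega)}\|F\|_{L^2(\Omega)}$ for all $F\in{\mathbb D}^{1,2}$, which is exactly the integrability bound (with constant $c=\|G\|_{L^2(\Omega)}$) required for $v\in{\rm Dom}(\delta^{a,b})$; hence $\delta^{a,b}(v)$ exists and satisfies $E[F\,\delta^{a,b}(v)]=E\langle D^{a,b}F,v\rangle_{{\mathcal H}_{a,b}}=E[FG]$. Therefore $E[F(\delta^{a,b}(v)-G)]=0$ for all $F$ in ${\mathbb D}^{1,2}$, and since ${\mathbb D}^{1,2}\supset{\mathcal S}^{a,b}$ is dense in $L^2(\Omega)$, I conclude $\delta^{a,b}(v)=G$ almost surely, which is the claim. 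The argument is essentially routine functional analysis; the only point demanding care---and thus the mild ``obstacle''---is the bookkeeping of two distinct Hilbert-space inner products ($L^2(\Omega)$ on the left, $L^2(\Omega;{\mathcal H}_{a,b})$ on the right) when interchanging limit and duality, together with invoking the density of ${\mathbb D}^{1,2}$ in $L^2(\Omega)$ to upgrade the weak identification of $\delta^{a,b}(v)$ with $G$ to almost-sure equality.
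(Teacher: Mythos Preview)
Your proof is correct and is exactly the standard closedness argument for the adjoint of a densely defined operator. Note, however, that the paper does not actually prove this lemma: it is stated with a citation to Nualart~\cite{Nualart1} and used as a black box, so there is no ``paper's own proof'' to compare against. Your argument is the canonical one underlying that reference.
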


\begin{proof}[Proof of Theorem~\ref{th7.1}]
If $F\in C^2({\mathbb R})$, then this is It\^o's formula since
$$
[f(B^{a,b}),B^{a,b}]^{(a,b)}_t=(1+a+b) \kappa_{a,b} \int_0^tf'(B^{a,b}_s)s^{a+b}ds.
$$
by~\eqref{sec6-eq6.2}. The assumption $F\in C^2({\mathbb R})$ is not correct. By a localization argument we may
assume that the function $f$ is uniformly bounded. In fact, for any
$k\geq 0$ we may consider the set
$$
\Omega_k=\left\{\sup_{0\leq t\leq T}|B^{a,b}_t|<k\right\}
$$
and let $f^{[k]}$ be a measurable function such that $f^{[k]}=f$ on
$[-k,k]$ and such that $f^{[k]}$ vanishes outside. Then $f^{[k]}\in
{\mathscr H}$ and uniformly bounded. Set
$\frac{d}{dx}F^{[k]}=f^{[k]}$ and $F^{[k]}=F$ on $[-k,k]$. If the
theorem is true for all uniformly bounded functions on ${\mathscr
H}$, then we get the desired formula
$$
F^{[k]}(B^{a,b}_t)=F^{[k]}(0)+\int_0^t
f^{[k]}(B^{a,b}_s)dB^{a,b}_s+\frac12(\kappa_{a,b})^{-1}
\bigl[f^{[k]}(B^{a,b}),B^{a,b}\bigr]^{(a,b)}_t
$$
on the set $\Omega_k$. Letting $k$ tend to infinity we deduce the
It\^o formula~\eqref{sec7-eq7.3} for all $f\in {\mathscr H}$ being
left continuous and locally bounded.

Let now $F'=f\in {\mathscr H}$ be uniformly bounded such that the conditions in the theorem hold. Define the sequence of smooth functions
$$
F_n(x):=\int_{\mathbb R}F(x-{y})\zeta_n(y)dy,\quad x\in {\mathbb R},
$$
where the mollifiers $\zeta_n,n\geq 1$ are given by~\eqref{sec7-eq7.4}. Then $F_n\in C^\infty({\mathbb R})$ and $F_n$ satisfies the condition~\eqref{sec2-Ito-con1}, and the It\^{o} formula
\begin{equation}\label{sec7-eq7.5}
F_n(B^{a,b}_t)=F_n(0)+\int_0^tf_n(B^{a,b}_s)dB^{a,b}_s+
\frac12(1+a+b)\int_0^tf_n'(B^{a,b}_s)s^{a+b}ds
\end{equation}
holds for all $n\geq 1$, where $f_n=F_n'$. We also have
$$
F_n \longrightarrow F,\quad f_n \longrightarrow f,
$$
uniformly in $\mathbb R$, as $n$ tends to infinity, and moreover,  $\{f_n\}\subset {\mathscr H}$, $f_n\to f$ in ${\mathscr H}$, as $n$ tends to infinity. It follows that
\begin{align*}
(1+a+b)\int_0^tf_n'(B^{a,b}_s)s^{a+b}ds &=(\kappa_{a,b})^{-1}\bigl[f_n(B^{a,b}),B^{a,b}\bigr]^{(a,b)}_t\\
&\longrightarrow (\kappa_{a,b})^{-1}\bigl[f(B^{a,b}),B^{a,b}\bigr]^{(a,b)}_t
\end{align*}
in $L^2(\Omega)$ by Corollary~\ref{cor5.2}, as $n$ tends to infinity. It follows that
\begin{align*}
\int_0^tf_n(B^{a,b}_s)dB^{a,b}_s&=F_n(B^{a,b}_t)-F_n(0)-
\frac12(\kappa_{a,b})^{-1}\bigl[f_n(B^{a,b}),B^{a,b}\bigr]^{(a,b)}_t\\
&\longrightarrow F(B^{a,b}_t)-F(0)-\frac12(\kappa_{a,b})^{-1} \bigl[f(B^{a,b}),B^{a,b}\bigr]^{(a,b)}_t
\end{align*}
in $L^2(\Omega)$, as $n$ tends to infinity. This completes the proof since
the integral is closed in $L^2(\Omega)$.
\end{proof}

Now, let us study the integral~\eqref{sec7-eq7.1} for $-1<b<0$.
\begin{lemma}
Let $-1<b<0$, $a>-1$ and $-1<a+b<3$. For any $f_\triangle=\sum_jf_j1_{(a_{j-1},a_j]}\in {\mathscr H}$, we
define
$$
\int_{\mathbb R}f_\triangle(x){\mathscr
L}^{a,b}(dx,t):=\sum_jf_j\left[{\mathscr L}^{a,b}(a_j,t)-{\mathscr
L}^{a,b}(a_{j-1},t)\right].
$$
Then the integral is well-defined and
\begin{equation}\label{sec6-eq6.4}
\kappa_{a,b}\int_{\mathbb R}f_{\Delta}(x)\mathscr{L}^{a,b}(dx,t)=
-\bigl[f_\triangle(B^{a,b}),B^{a,b}\bigr]^{(a,b)}_t
\end{equation}
almost surely, for all $t\in [0,T]$.
\end{lemma}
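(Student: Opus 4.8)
The plan is to prove the identity first for a single indicator $f=1_{(c,d]}$ and then pass to a general $f_\triangle$ by linearity, since both $J_\varepsilon(\cdot,t)$ (hence $[\,\cdot\,,B^{a,b}]^{(a,b)}_t$ whenever it exists) and the finite sum defining $\int_{\mathbb R}(\cdot)\,\mathscr L^{a,b}(dx,t)$ are linear in the integrand. Well-definedness is immediate: under the standing assumption $-1<a+b<3$ the Corollary preceding this lemma guarantees that $(x,t)\mapsto L(x,t)$ is jointly continuous, so $x\mapsto \mathscr L^{a,b}(x,t)=(1+a+b)\int_0^ts^{a+b}L(x,ds)$ is continuous and each increment $\mathscr L^{a,b}(a_j,t)-\mathscr L^{a,b}(a_{j-1},t)$ is a genuine random variable.

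First I would record a smooth version of the identity. For $g\in C^1(\mathbb R)$ with the growth used in Section~\ref{sec6}, formula~\eqref{sec6-eq6.2} reads $[g(B^{a,b}),B^{a,b}]^{(a,b)}_t=\kappa_{a,b}(1+a+b)\int_0^tg'(B^{a,b}_s)s^{a+b}ds$; applying the occupation density formula~\eqref{sec2-1-eq1} with $\psi(x,s)=g'(x)s^{a+b}$ and interchanging the $x$- and $s$-integrations gives
$$
[g(B^{a,b}),B^{a,b}]^{(a,b)}_t=\kappa_{a,b}\int_{\mathbb R}g'(x)\,\mathscr L^{a,b}(x,t)\,dx.
$$
Because $a+b>-1$ the weight $s^{a+b}$ is integrable at $0$, so the only care needed here is a routine truncation $\int_\eta^t$ with $\eta\downarrow 0$ to legitimize~\eqref{sec2-1-eq1} for the (in $s$) unbounded integrand.

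Next, for $f=1_{(c,d]}$ I would take the antiderivative $F(x)=(x-c)^+-(x-d)^+$ and mollify it by the bump functions $\zeta_n$ of~\eqref{sec7-eq7.4}, setting $F_n=F*\zeta_n\in C^\infty$ and $f_n:=F_n'=f*\zeta_n$. As in the proof of Lemma~\ref{lem6.2}, $f_n$ is bounded, $f_n\in C^\infty$, and $f_n\to f$ in $\mathscr H$, so Corollary~\ref{cor5.2} yields $[f_n(B^{a,b}),B^{a,b}]^{(a,b)}_t\to[f(B^{a,b}),B^{a,b}]^{(a,b)}_t$ in $L^2(\Omega)$. On the other side, the displayed smooth identity applied to $g=f_n$ gives $[f_n(B^{a,b}),B^{a,b}]^{(a,b)}_t=\kappa_{a,b}\int_{\mathbb R}f_n'(x)\,\mathscr L^{a,b}(x,t)\,dx$, and since $f_n'=(\delta_c-\delta_d)*\zeta_n=\zeta_n(\cdot-c)-\zeta_n(\cdot-d)$ is a difference of approximate identities concentrating at $c$ and $d$, the continuity of $x\mapsto\mathscr L^{a,b}(x,t)$ forces
$$
\int_{\mathbb R}f_n'(x)\,\mathscr L^{a,b}(x,t)\,dx\longrightarrow \mathscr L^{a,b}(c,t)-\mathscr L^{a,b}(d,t)
$$
almost surely. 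Matching the two limits gives $[1_{(c,d]}(B^{a,b}),B^{a,b}]^{(a,b)}_t=\kappa_{a,b}\bigl[\mathscr L^{a,b}(c,t)-\mathscr L^{a,b}(d,t)\bigr]=-\kappa_{a,b}\int_{\mathbb R}1_{(c,d]}(x)\,\mathscr L^{a,b}(dx,t)$, which is exactly~\eqref{sec6-eq6.4} for one interval; summing over $j$ with the scalars $f_j$ then completes the proof.

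The main obstacle is this second convergence: one must justify that $f_n'$ acts as a difference of Dirac masses against the random field $\mathscr L^{a,b}(\cdot,t)$, i.e. that $\int_{\mathbb R}\zeta_n(x-c)\mathscr L^{a,b}(x,t)\,dx\to\mathscr L^{a,b}(c,t)$ and likewise at $d$. This is where the joint continuity of the local time—available precisely because $a+b<3$—is essential, and where the one-sided support of $\zeta_n$ must be checked to cause no trouble at the jump points. A secondary point is to identify the $L^2$-limit coming from Corollary~\ref{cor5.2} with the almost-sure limit above; this follows since $L^2$-convergence implies convergence in probability, whence a subsequence converges almost surely to the same limit.
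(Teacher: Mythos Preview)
Your proposal is correct and follows essentially the same route as the paper's proof: mollify the indicator $1_{(c,d]}$ by the bump functions $\zeta_n$, use the smooth identity~\eqref{sec6-eq6.2} together with the occupation formula to rewrite $[f_n(B^{a,b}),B^{a,b}]^{(a,b)}_t$ as $\kappa_{a,b}\int_{\mathbb R}f_n'(x)\mathscr L^{a,b}(x,t)\,dx$, pass to the limit using continuity of $x\mapsto\mathscr L^{a,b}(x,t)$ on one side and Corollary~\ref{cor5.2} (via a subsequence for a.s.\ convergence) on the other, and then extend by linearity. The paper carries out exactly these steps, so your plan matches it.
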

\begin{proof}
For the function $f_\triangle(x)=1_{(a,b]}(x)$ we define the sequence of smooth functions $f_n,\;n=1,2,\ldots$ by
\begin{align}
f_n(x)&=\int_{\mathbb
R}f_\triangle(x-y)\zeta_n(y)dy=\int_a^b\zeta_n(x-u)du
\end{align}
for all $x\in \mathbb R$, where $\zeta_n,n\geq 1$ are the so-called
mollifiers given in~\eqref{sec7-eq7.4}. Then $\{f_n\}\subset
C^{\infty}({\mathbb R})\cap {\mathscr H}$ and $f_n$ converges to $f_\triangle$ in ${\mathscr H}$, as $n$ tends to infinity. It follows from  the occupation formula that
\begin{align*}
\bigl[f_n(B^{a,b}),B^{a,b}\bigr]^{(a,b)}_t& =\kappa_{a,b}\int_0^tf'_n(B^{a,b}_s)ds^{1+a+b}\\
&=\kappa_{a,b}\int_{\mathbb R}f_n'(x){\mathscr L}^H(x,t)dx=\int_{\mathbb
R}\left(\int_a^b\zeta_n'(x-u)du\right){\mathscr L}^{a,b}(x,t)dx\\
&=-\kappa_{a,b}\int_{\mathbb R}{\mathscr
L}^{a,b}(x,t)\left(\zeta_n(x-b)-\zeta_n(x-a)\right)dx\\
&=\kappa_{a,b}\int_{\mathbb R}{\mathscr L}^{a,b}(x,t)\zeta_n(x-a)dx
-\int_{\mathbb R}{\mathscr L}^{a,b}(x,t)\zeta_n(x-b)dx\\
&\longrightarrow \kappa_{a,b}\left({\mathscr L}^{a,b}(a,t)-{\mathscr L}^{a,b}(b,t)\right)
\end{align*}
almost surely, as $n\to \infty$, by the continuity of $x\mapsto
{\mathscr L}^{a,b}(x,t)$. On the other hand, Corollary~\ref{cor5.2}
implies that there exists a subsequence $\{f_{n_k}\}$ such that
$$
\bigl[f_{n_k}(B^{a,b}),B^{a,b}\bigr]^{(a,b)}_t\longrightarrow \bigl[1_{(a,b]}(B^{a,b}),B^{a,b}\bigr]^{(a,b)}_t
$$
for all $t\in [0,T]$, almost surely, as $k\to \infty$. Then we have
$$
\bigl[1_{(a,b]}(B^{a,b}),B^{a,b}\bigr]^{(a,b)}_t =\kappa_{a,b}\left({\mathscr L}^{a,b}(a,t)-{\mathscr L}^{a,b}(b,t)
\right)
$$
for all $t\in [0,T]$, almost surely. Thus, the identity
$$
\kappa_{a,b}\sum_jf_j\bigl[{\mathscr L}^{a,b}(a_j,t)-{\mathscr
L}^{a,b}(a_{j-1},t)\bigr]=
-\bigl[f_\triangle(B^{a,b}),B^{a,b}\bigr]^{(a,b)}_t
$$
follows from the linearity property, and the lemma follows.
\end{proof}

As a direct consequence of the above lemma we can show
that
\begin{equation} \lim_{n\to \infty}\int_{\mathbb
R}f_{\triangle,n}(x){\mathscr L}^{a,b}(x,t)dx =\lim_{n\to
\infty}\int_{\mathbb R}g_{\triangle,n}(x){\mathscr L}^{a,b}(x,t)dx
\end{equation}
in $L^2(\Omega)$ if
$$
\lim_{n\to \infty}f_{\triangle,n}(x)=\lim_{n\to
\infty}g_{\triangle,n}(x)=f(x)
$$
in ${\mathscr H}$, where $\{f_{\triangle,n}\},\{g_{\triangle,n}\}\subset
{\mathscr E}$. Thus, by the density of ${\mathscr
E}$ in ${\mathscr H}$ we can define
$$
\int_{\mathbb R}f(x){\mathscr L}^{a,b}(dx,t):=\lim_{n\to
\infty}\int_{\mathbb R}f_{\triangle,n}(x){\mathscr L}^{a,b}(dx,t)
$$
for any $f\in {\mathscr H}$, where $\{f_{\triangle,n}\}\subset
{\mathscr E}$ and
$$
\lim_{n\to \infty}f_{\triangle,n}(x)=f(x)
$$
in ${\mathscr H}$. These considerations are enough to prove the following theorem.
\begin{theorem}\label{th7.2}
Let $-1<b<0$, $a>-1$ and $-1<a+b<3$. For any $f\in {\mathscr H}$, the integral
$$
\int_{\mathbb R}f(x){\mathscr L}^{a,b}(dx,t)
$$
is well-defined in $L^2(\Omega)$ and the Bouleau-Yor type formula
\begin{equation}\label{sec6-eq6.3}
\bigl[f(B^{a,b}),B^{a,b}\bigr]^{(a,b)}_t=-\kappa_{a,b}\int_{\mathbb R}f(x)\mathscr{L}^{a,b}(dx,t)
\end{equation}
holds, almost surely, for all $t\in [0,T]$.
\end{theorem}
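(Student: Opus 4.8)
The plan is to obtain the identity~\eqref{sec6-eq6.3} for an arbitrary $f\in{\mathscr H}$ from the elementary-function case by approximation, controlling the quadratic-covariation side through Corollary~\ref{cor5.2} and the local-time side through the consistency of the integral established just above. Since the elementary functions ${\mathscr E}$ are dense in ${\mathscr H}$ (property (3) of ${\mathscr H}$), I first choose a sequence $\{f_{\triangle,n}\}\subset{\mathscr E}$ with $f_{\triangle,n}\to f$ in ${\mathscr H}$. The consistency statement preceding the theorem shows that $\lim_{n}\int_{\mathbb R}f_{\triangle,n}(x){\mathscr L}^{a,b}(dx,t)$ exists in $L^2(\Omega)$ and is independent of the approximating sequence, so that $\int_{\mathbb R}f(x){\mathscr L}^{a,b}(dx,t)$ is well-defined as this common $L^2$-limit; this already disposes of the well-definedness assertion.

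Next I would pass to the limit in the elementary identity. For each $n$ the preceding lemma gives
\[
\kappa_{a,b}\int_{\mathbb R}f_{\triangle,n}(x){\mathscr L}^{a,b}(dx,t) =-\bigl[f_{\triangle,n}(B^{a,b}),B^{a,b}\bigr]^{(a,b)}_t
\]
almost surely for every $t\in[0,T]$. On the left-hand side the definition above yields $L^2(\Omega)$-convergence to $\kappa_{a,b}\int_{\mathbb R}f(x){\mathscr L}^{a,b}(dx,t)$, while on the right-hand side Corollary~\ref{cor5.2} gives $L^2(\Omega)$-convergence of $\bigl[f_{\triangle,n}(B^{a,b}),B^{a,b}\bigr]^{(a,b)}_t$ to $\bigl[f(B^{a,b}),B^{a,b}\bigr]^{(a,b)}_t$, precisely because $f_{\triangle,n}\to f$ in ${\mathscr H}$. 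By uniqueness of $L^2$-limits, for each fixed $t$ the identity~\eqref{sec6-eq6.3} holds in $L^2(\Omega)$, hence almost surely.

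The only genuine subtlety — and the step I expect to require the most care — is upgrading this per-$t$ statement to a single exceptional null set valid for all $t\in[0,T]$ simultaneously. This is where the hypothesis $-1<a+b<3$ is used: by the earlier corollary on the local time it makes $(x,t)\mapsto L(x,t)$ jointly continuous, so that $t\mapsto\int_{\mathbb R}f(x){\mathscr L}^{a,b}(dx,t)$ is a continuous process; the map $t\mapsto\bigl[f(B^{a,b}),B^{a,b}\bigr]^{(a,b)}_t$ is continuous because it arises as a limit of the continuous processes $t\mapsto J_\varepsilon(f,t)$. Since~\eqref{sec6-eq6.3} holds almost surely for every $t$ in a fixed countable dense subset of $[0,T]$, and both sides are continuous in $t$, agreement on that dense set forces the identity to hold, on one event of full probability, for all $t\in[0,T]$, which completes the proof. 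In essence the theorem is a packaging of the work already done; the heavy analysis lies in Theorem~\ref{th6.2} and the elementary-function identity, and here one need only assemble them through the limiting argument above.
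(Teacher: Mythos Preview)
Your proposal is correct and follows essentially the same route as the paper: the paper simply says ``these considerations are enough to prove the following theorem,'' meaning the density of ${\mathscr E}$ in ${\mathscr H}$, the elementary-function identity~\eqref{sec6-eq6.4}, and Corollary~\ref{cor5.2} combine exactly as you describe to give both the well-definedness of the integral and the identity~\eqref{sec6-eq6.3}. The one place you go beyond the paper is your continuity argument to upgrade from ``almost surely for each $t$'' to ``almost surely for all $t\in[0,T]$''; the paper leaves this implicit, and your justification that $t\mapsto[f(B^{a,b}),B^{a,b}]^{(a,b)}_t$ is continuous is a little delicate since Theorem~\ref{th6.2} only establishes $L^2$-convergence for each fixed $t$ rather than ucp convergence, but this is a refinement of a point the paper does not address at all.
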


\begin{corollary}[Tanaka formula]
Let $-1<b<0$, $a>-1$ and $-1<a+b<3$. For any $x\in {\mathbb R}$ we have
\begin{align*}
(B^{a,b}_t-x)^{+}=(-x)^{+}+\int_0^t{1}_{\{B^{a,b}_s>x\}} dB^{a,b}_s
+\frac12{\mathscr L}^{a,b}(x,t),\\
(B^{a,b}_t-x)^{-}=(-x)^{-}-\int_0^t{1}_{\{B^{a,b}_s<x\}}
dB^{a,b}_s+\frac12{\mathscr L}^{a,b}(x,t),\\
|B^{a,b}_t-x|=|x|+\int_0^t{\rm sign}(B^{a,b}_s-x)dB^{a,b}_s+{\mathscr L}^{a,b}(x,t).
\end{align*}
\end{corollary}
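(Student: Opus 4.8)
The plan is to read off all three identities from the extended It\^o formula of Theorem~\ref{th7.1} combined with the generalized Bouleau-Yor identity of Theorem~\ref{th7.2}, applied to the functions $y\mapsto(y-x)^{\pm}$.

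First I would fix $x\in{\mathbb R}$ and set $F(y)=(y-x)^{+}$, so that $f:=F'=1_{(x,\infty)}$ almost everywhere. I must verify that $f$ satisfies the hypotheses of Theorem~\ref{th7.1}: the indicator $1_{(x,\infty)}$ is left continuous with right limits (its only jump sits at $y=x$, where the left limit coincides with the value $0$), it is bounded so that~\eqref{sec7-Ito-con1} holds for every $\beta>0$ while $F$ grows only linearly, and $f\in{\mathscr H}$ since the inner Gaussian integral is dominated by $\sqrt{2\pi}\,s^{(1+a+b)/2}$, giving
$$
\|1_{(x,\infty)}\|_{\mathscr H}^2=\int_0^{T+1}\int_x^{\infty}e^{-\frac{y^2}{2s^{1+a+b}}}\frac{dy\,ds}{\sqrt{2\pi}\,s^{(1-a-b)/2}}\leq\int_0^{T+1}s^{a+b}\,ds<\infty,
$$
the last integral being finite because $a+b>-1$. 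Theorem~\ref{th7.1} then yields $(B^{a,b}_t-x)^{+}=(-x)^{+}+\int_0^t1_{\{B^{a,b}_s>x\}}\,dB^{a,b}_s+\tfrac12(\kappa_{a,b})^{-1}[1_{(x,\infty)}(B^{a,b}),B^{a,b}]^{(a,b)}_t$.

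The crux is evaluating this covariation. By Theorem~\ref{th7.2} it equals $-\kappa_{a,b}\int_{\mathbb R}1_{(x,\infty)}(y)\,{\mathscr L}^{a,b}(dy,t)$, so I would compute the local-time integral. Using the step-function definition, for $M>x$ one has $\int1_{(x,M]}(y){\mathscr L}^{a,b}(dy,t)={\mathscr L}^{a,b}(M,t)-{\mathscr L}^{a,b}(x,t)$; since for each $\omega$ the path $B^{a,b}$ is bounded on $[0,t]$, the jointly continuous local time $y\mapsto{\mathscr L}^{a,b}(y,t)$ has compact support, whence ${\mathscr L}^{a,b}(M,t)\to0$, while $1_{(x,M]}\to1_{(x,\infty)}$ in ${\mathscr H}$ by dominated convergence. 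Thus $\int_{\mathbb R}1_{(x,\infty)}(y){\mathscr L}^{a,b}(dy,t)=-{\mathscr L}^{a,b}(x,t)$, the covariation equals $\kappa_{a,b}{\mathscr L}^{a,b}(x,t)$, and substituting (using $\tfrac12(\kappa_{a,b})^{-1}\kappa_{a,b}=\tfrac12$) gives the first identity. For the second identity I would sidestep the (non left continuous) indicator $1_{(-\infty,x)}$ by subtraction: applying Theorem~\ref{th7.1} to the affine $F(y)=y-x$, whose covariation term vanishes, gives $B^{a,b}_t-x=(-x)+\int_0^t dB^{a,b}_s$; subtracting the first identity and invoking $(B^{a,b}_t-x)-(B^{a,b}_t-x)^{+}=-(B^{a,b}_t-x)^{-}$, $(-x)-(-x)^{+}=-(-x)^{-}$ and $\int_0^t(1-1_{\{B^{a,b}_s>x\}})\,dB^{a,b}_s=\int_0^t1_{\{B^{a,b}_s<x\}}\,dB^{a,b}_s$ (valid because the law of $B^{a,b}_s$ is absolutely continuous, so $\{B^{a,b}_s=x\}$ contributes nothing) produces the stated formula for $(B^{a,b}_t-x)^{-}$. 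Adding the two and using $(-x)^{+}+(-x)^{-}=|x|$ and $1_{\{y>x\}}-1_{\{y<x\}}={\rm sign}(y-x)$ gives the absolute-value formula.

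The main obstacle is the middle step, namely justifying $\int_{\mathbb R}1_{(x,\infty)}(y){\mathscr L}^{a,b}(dy,t)=-{\mathscr L}^{a,b}(x,t)$: this needs the $L^2$-definition of the local-time integral to be reconciled with the pathwise compact support and joint continuity of ${\mathscr L}^{a,b}$ (available precisely for $-1<a+b<3$), so that the boundary contribution at $+\infty$ truly vanishes. The remaining steps are the routine algebraic decompositions of $(y-x)^{\pm}$.
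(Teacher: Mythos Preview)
Your proposal is correct and follows essentially the same route as the paper: apply Theorem~\ref{th7.1} to $F(y)=(y-x)^{+}$, identify the covariation term with $\kappa_{a,b}\,{\mathscr L}^{a,b}(x,t)$ via the Bouleau-Yor machinery, and then combine to get the remaining two formulas. The only minor differences are that the paper obtains the second identity by the parallel argument (``in the same way'') rather than your subtraction from the linear $F(y)=y-x$, and that the paper simply invokes~\eqref{sec6-eq6.4} where you spell out the $1_{(x,M]}\to 1_{(x,\infty)}$ limit; your extra care on both points is a refinement, not a departure.
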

\begin{proof}
Take $F(y)=(y-x)^{+}$. Then $F$ is absolutely continuous and
$$
F(x)=\int_{-\infty}^y1_{(x,\infty)}(y)dy.
$$
It follows from It\^o's formula~\eqref{sec7-eq7.3} and the
identity~\eqref{sec6-eq6.4} that
\begin{align*}
{\mathscr
L}^{a,b}(x,t)&=(\kappa_{a,b})^{-1} \bigl[1_{(x,+\infty)}(B^{a,b}),B^{a,b}\bigr]^{(a,b)}_t\\
&=2(B^{a,b}_t-a)^{+}-2(-x)^{+}-2 \int_0^t{1}_{\{B^{a,b}_s>x\}}dB^{a,b}_s
\end{align*}
for all $t\in [0,T]$, which gives the first identity. In the same
way one can obtain the second identity. By subtracting the last
identity from the previous one, we get the third identity.
\end{proof}

\begin{corollary}
Let $-1<b<0$, $a>-1$, $-1<a+b<3$ and let $f,f_1,f_2,\ldots\in {\mathscr H}$. If $f_n\to f$ in ${\mathscr H}$, as $n$ tends to infinity, we then have
\begin{align*}
\int_{\mathbb R}f_n(x){\mathscr L}^{a,b}(dx,t)\longrightarrow
\int_{\mathbb R}f(x){\mathscr L}^{a,b}(dx,t)
\end{align*}
in $L^2$, as $n$ tends to infinity.
\end{corollary}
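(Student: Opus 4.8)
The plan is to reduce the assertion to the $L^2$-continuity of the generalized quadratic covariation that has already been established, using the generalized Bouleau--Yor identity as a bridge between the two objects. By Theorem~\ref{th7.2}, for every $g\in{\mathscr H}$ the integral $\int_{\mathbb R}g(x){\mathscr L}^{a,b}(dx,t)$ is well-defined in $L^2(\Omega)$ and satisfies the identity~\eqref{sec6-eq6.3}, namely
$$
\int_{\mathbb R}g(x){\mathscr L}^{a,b}(dx,t)=-(\kappa_{a,b})^{-1}\bigl[g(B^{a,b}),B^{a,b}\bigr]^{(a,b)}_t,
$$
where $\kappa_{a,b}=\frac1{(1+b){\mathbb B}(a+1,b+1)}$ is a fixed nonzero constant. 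Thus the whole problem is transported to the quadratic covariation side, where the relevant convergence is already available.

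First I would apply this identity with $g=f_n$ and with $g=f$ and subtract. Since $J_\varepsilon(\cdot,t)$ is linear in its first argument, the limiting map $g\mapsto\bigl[g(B^{a,b}),B^{a,b}\bigr]^{(a,b)}_t$ is linear, so
$$
\int_{\mathbb R}f_n(x){\mathscr L}^{a,b}(dx,t)-\int_{\mathbb R}f(x){\mathscr L}^{a,b}(dx,t) =-(\kappa_{a,b})^{-1}\bigl[(f_n-f)(B^{a,b}),B^{a,b}\bigr]^{(a,b)}_t.
$$
Taking $L^2(\Omega)$-norms and invoking the estimate~\eqref{th4.1-eq} of Theorem~\ref{th6.2} applied to $f_n-f\in{\mathscr H}$, I would obtain
$$
E\left|\int_{\mathbb R}f_n(x){\mathscr L}^{a,b}(dx,t)-\int_{\mathbb R}f(x){\mathscr L}^{a,b}(dx,t)\right|^2 \le(\kappa_{a,b})^{-2}C_{a,b,T}\,\|f_n-f\|_{\mathscr H}^2.
$$
Since $f_n\to f$ in ${\mathscr H}$, the right-hand side tends to zero, which is exactly the desired $L^2$-convergence. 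Equivalently, one may skip the explicit bound and simply quote Corollary~\ref{cor5.2}, which already gives $\bigl[f_n(B^{a,b}),B^{a,b}\bigr]^{(a,b)}_t\to\bigl[f(B^{a,b}),B^{a,b}\bigr]^{(a,b)}_t$ in $L^2$; multiplying by the constant $-(\kappa_{a,b})^{-1}$ and using the identity above finishes the argument at once.

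There is essentially no serious obstacle here: all the analytic work was carried out in proving Theorem~\ref{th6.2} and Corollary~\ref{cor5.2}, and the present statement is their immediate translation through the Bouleau--Yor identity~\eqref{sec6-eq6.3}. The only points requiring a word of care are the linearity of the covariation map, inherited term-by-term from the definition of $J_\varepsilon$, and the fact that $\kappa_{a,b}\neq 0$ under the standing assumptions $a>-1$, $-1<b<0$, $|b|<1+a$, so that dividing by it is legitimate.
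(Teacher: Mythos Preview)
Your proposal is correct and is exactly the intended argument: the paper states this corollary without proof, as it follows immediately from the Bouleau--Yor identity of Theorem~\ref{th7.2} combined with Corollary~\ref{cor5.2}, which is precisely what you do. The only minor remark is that the explicit bound~\eqref{th4.1-eq} in Theorem~\ref{th6.2} is stated there under the extra hypothesis $a\geq 0$, so the cleaner route is the one you mention at the end, namely to quote Corollary~\ref{cor5.2} directly rather than the estimate itself.
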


According to Theorem~\ref{th7.1}, we get an analogue of the It\^o
formula (Bouleau-Yor type formula).
\begin{corollary}\label{cor7.1}
Let $-1<b<0$, $a>-1$, $-1<a+b<3$ and let $f\in {\mathscr H}$ be a left continuous function with right limits. If $F$ is an absolutely continuous function with $F'=f$ and the condition~\eqref{sec7-Ito-con1} is satisfied, then the following It\^o type formula holds:
\begin{equation}\label{sec7-eq7.11}
F(B^{a,b}_t)=F(0)+\int_0^tf(B^{a,b}_s)dB^{a,b}_s -\frac12\int_{\mathbb R}f(x){\mathscr L}^{a,b}(dx,t).
\end{equation}
\end{corollary}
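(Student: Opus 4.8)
The plan is to obtain this formula by directly combining the two central results already established in this section, namely the It\^o type formula of Theorem~\ref{th7.1} and the generalized Bouleau-Yor identity of Theorem~\ref{th7.2}. Both of these hold precisely under the hypotheses stated in the corollary, so the first step is simply to check that the assumptions of the corollary---$-1<b<0$, $a>-1$, $-1<a+b<3$, $f\in{\mathscr H}$ left continuous with right limits, and $F$ absolutely continuous with $F'=f$ satisfying the growth condition~\eqref{sec7-Ito-con1}---are exactly the union of the hypotheses required by Theorem~\ref{th7.1} and Theorem~\ref{th7.2}. Indeed, the condition $-1<a+b<3$ is what guarantees the joint continuity of the weighted local time ${\mathscr L}^{a,b}$ and hence the validity of the Bouleau-Yor identity, while the regularity and growth assumptions on $f$ and $F$ are exactly what Theorem~\ref{th7.1} demands.

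Granting this, I would start from the It\^o formula~\eqref{sec7-eq7.3} of Theorem~\ref{th7.1}, which reads
$$
F(B^{a,b}_t)=F(0)+\int_0^tf(B^{a,b}_s)dB^{a,b}_s+\frac12(\kappa_{a,b})^{-1}[f(B^{a,b}),B^{a,b}]^{(a,b)}_t.
$$
The second step is to eliminate the generalized quadratic covariation term using the Bouleau-Yor identity~\eqref{sec6-eq6.3} of Theorem~\ref{th7.2}, namely
$$
[f(B^{a,b}),B^{a,b}]^{(a,b)}_t=-\kappa_{a,b}\int_{\mathbb R}f(x){\mathscr L}^{a,b}(dx,t).
$$
Substituting this into the previous display, the constants cancel since $(\kappa_{a,b})^{-1}\cdot(-\kappa_{a,b})=-1$, so the last term becomes $-\frac12\int_{\mathbb R}f(x){\mathscr L}^{a,b}(dx,t)$, which is precisely the desired formula~\eqref{sec7-eq7.11}.

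There is no genuine obstacle here: the corollary is a formal consequence of the two theorems, and the only points requiring attention are the bookkeeping of the constant $\kappa_{a,b}$ and the verification that both theorems are simultaneously applicable under the common hypotheses. In particular, the almost sure sense of the identity is inherited from Theorem~\ref{th7.2}, and the $L^2(\Omega)$ existence of the integral $\int_{\mathbb R}f(x){\mathscr L}^{a,b}(dx,t)$ is already guaranteed by that theorem, so no further convergence or approximation argument is needed beyond what has been carried out in the proofs of Theorem~\ref{th7.1} and Theorem~\ref{th7.2}.
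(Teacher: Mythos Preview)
Your proposal is correct and matches the paper's approach exactly: the paper states this corollary immediately after Theorem~\ref{th7.2} with only the remark ``According to Theorem~\ref{th7.1}, we get an analogue of the It\^o formula,'' indicating that it is obtained precisely by substituting the Bouleau-Yor identity~\eqref{sec6-eq6.3} into the It\^o formula~\eqref{sec7-eq7.3}. No additional argument is given or needed.
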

Recall that if $F$ is the difference of two convex functions, then
$F$ is an absolutely continuous function with derivative of bounded
variation. Thus, the It\^o-Tanaka formula
\begin{align*}
F(B^{a,b}_t)&=F(0)+\int_0^tF^{'}(B^{a,b}_s)dB^{a,b}_s +\frac12\int_{\mathbb R}{\mathscr L}^{a,b}(x,t)F''(dx)\\
&\equiv F(0)+\int_0^tF^{'}(B^{a,b}_s)dB^{a,b}_s -\frac12\int_{\mathbb R}F'(x){\mathscr L}^{a,b}(dx,t)
\end{align*}
holds for all $-1<b<0$, $a>-1$ and $-1<a+b<3$.


\end{document}